\numberwithin{equation}{section}
\newtheorem{theorem}{Theorem}[section]
\newtheorem{lemma}[theorem]{Lemma}
\newtheorem{proposition}[theorem]{Proposition}
\theoremstyle{plain}
\newtheorem*{assumption*}{\assumptionnumber}
\providecommand{\assumptionnumber}{}
\newenvironment{assumption}[1]
 {%
  \renewcommand{\assumptionnumber}{Assumption #1}%
  \begin{assumption*}%
  \protected@edef\@currentlabel{\textbf{\textup {#1}}}%
 }
 {%
  \end{assumption*}
 }
\theoremstyle{definition}
\newtheorem{example}[theorem]{Example}
\newtheorem{definition}[theorem]{Definition}
\newtheorem{remark}[theorem]{Remark}
\def\E{{\mathbb E}}
\def\R{{\mathbb R}}
\def\N{{\mathbb N}}
\def\P{{\mathbb P}}
\def\E{{\mathbb E}}
\def\R{{\mathbb R}}
\def\N{{\mathbb N}}
\def\P{{\mathbb P}}
\def\D{D}
\newcommand{\ent}{ \textup {H}}
\begin{document}
\title[Propagation of Chaos for Conditional McKean--Vlasov Equations]{Regularity and Propagation of Chaos for conditional McKean--Vlasov equations}
\author{Manuel Arnese}
\address{Department of Industrial Engineering \& Operations Research, Columbia University}
\email{ma4339@columbia.edu}
\thanks{M.A.\ is supported by the Unicredit Marco Fanno scholarship and would like to thank Daniel Lacker for the many and helpful discussions.}
\date{}

\begin{abstract}
We study the rate of propagation of chaos for a McKean--Vlasov equation with conditional expectation terms in the drift. We use a (regularized) Nadaraya--Watson estimator at a particle level to approximate the conditional expectations; we then combine relative entropy methods in the spirit of Jabin and Wang \cite{JWEntropicChaos} with information theoretic inequalities to obtain the result. The nonparametric nature of the problem requires higher regularity for the density of the McKean--Vlasov limit, which we obtain with a bootstrap argument and energy estimates. 
\end{abstract}

\maketitle
\section{introduction}
We study a particle approximation for a system of $m$-coupled Stochastic Differential Equations $X_t=(X_t^i)_{i=1}^m \in \R^{dm}$ that depend on their conditional expectations:
\begin{align}
\label{eq: main SDE}
\begin{split}
    dX^i_t&= \sum_{j=1}^m \E\left[b_j^i(X_t)\,\big \vert\, X_t^j\right]\,dt+V^i(X_t)\,dt+\sqrt 2 \sigma \,dW^i_t;\\
    \text{Law}(X_0)&=\mu_0;\\
    i&=1,\ldots,m,
\end{split}
\end{align}
where each $X_t^i \in \R^d$ and $(W_t^i)_{i=1}^m$ are $m$ independent Brownian motions. Equations of this type have arisen in many different domains. Our main motivation comes from the theory of Wasserstein gradient flows \cite{PLD}, but similar processes have appeared in the study of interacting diffusions on regular trees \cite{local_Equation},\cite{hu2024htheorem},\cite{hu2025case} and for free energy computations in molecular dynamics \cite{BiasingForce}. Related models that we cannot cover in full generality appear in fluid dynamics \cite{JabirModerateInteractions} and in mathematical finance \cite{markovianProjection},\cite{djete2024nonregularmckeanvlasovequationscalibration}.\\
Equation \eqref{eq: main SDE} is a non-linear SDE in the sense of McKean, meaning that its coefficients depend on the law of the solution process. A key feature of \eqref{eq: main SDE} is that the non-linearity is both local and non-local: the drift depends on the density of the solution process both pointwise and through an integral, which is informally clear if we take $\mu_t$ to be the density of $X_t$ and rewrite
\[\E\left[b_j^i(X_t)\,\big\vert\, X^j_t=x^j\right]= \frac{\int b^{i}_j(x^j,x^{-j})\mu_t(x^j,x^{-j})\,dx^{-j}}{\int \mu_t(x^j,x^{-j})\,dx^{-j}},\]
where we abuse notation and write $x=(x^j,x^{-j})$ for some $x\in \R^{dm},x^j \in \R^d,x^{-j}\in \R^{d(m-1)}$. Moreover the coefficients are somewhat singular due to the denominator. These elements complicate the analysis and require an approximation with a moderately interacting particle system \cite{Oelschlger1985}. Let $K_h$ be a mollification kernel with variance $h>0$. We will consider the particle system $\boldsymbol X_t= (X^k_{t})_{k=1}^n$:
\begin{align}
\label{Eq: Particle System}
\begin{split}
    dX^{k,i}_{t}&= \sum_{j=1}^m\frac{\frac1n\sum_{\ell=1}^n b_j^i(X^{k,j}_t,X^{\ell,-j}_{t}) K_h(X^{k,j}_t-X_{t}^{\ell,j})}{\max\left( \frac 1n\sum_{\ell=1}^n K_h(X_{t}^{k,j}-X^{\ell,j}_{t}),\varepsilon\right)}\,dt+V^i(X^{k}_{t})\,dt+\sqrt 2 \sigma \,dW^{k,i}_{t};\\
    \text{Law}(\boldsymbol X_0)&= \mu_0^{\otimes n};\\
    i&=1,\ldots,m;\,\, k=1,\ldots,n.
\end{split}
\end{align}
We approximate the conditional expectation term at a particle level with the celebrated Nadaraya--Watson estimator:
\[\frac{\frac1n\sum_{\ell=1}^n b_j^i(X^{k,j}_t,X^{\ell,-j}_{t}) K_h(X_{t}^{k,j}-X^{\ell,j}_{t})}{\frac 1n\sum_{\ell=1}^n K_h(X_{t}^{k,j}-X^{\ell,j}_{t})}.\]
This is a standard estimator from the theory of nonparametric statistics (see, for example, \cite{Tsybakov2009} or \cite{nonparametric_regression_book}). We further regularize the problem by requiring the denominator to be above a free parameter $\varepsilon$ to make the drift non-singular.  A similar particle method has previously appeared in \cite{BiasingForce} and \cite{BossyConditionalLagriangian}, albeit with a different choice of regularization, and is widely used in practice to implement stochastic local volatility models, see \cite[Chapter 11]{GuyonNonLinearOptionpricing}. Our choice of regularization is taken from the literature on Empirical Bayes \cite{EB1},\cite{EB2} and Score Estimation \cite{WibisonoScoreEstimation}.\\

\subsection{Main result}
Our main result is a convergence rate in total variation on path space of the particle system \eqref{Eq: Particle System} to the McKean--Vlasov limit \eqref{eq: main SDE}. Our main assumption is that the initial law $\mu_0$ and the coefficients $b=(b^i_j)_{i,j\in [m]},V=(V^i)_{i \in [m]}$ are regular enough.

For two probability measures $\alpha,\beta$ on some measurable space, we will denote by 
\[\|\alpha-\beta\|_{\text{TV}}=2 \sup_{S}|\alpha(S)-\beta(S)|\]
the total variation distance between $\alpha$ and $\beta$. Let $\pi[t]$ be the law of the solution of \eqref{Eq: Particle System} on path space up to time $t>0$, and let $\pi^k[t]$ be the marginal of its first $k$ particles. In the same way we denote by $\mu[t]$ the law of \eqref{eq: main SDE} on path space up to time $t$, and by $\mu^{\otimes k}$ the $k$-product of independent copies of $\mu$.

\begin{theorem}
\label{th: main}
Assume that
\begin{enumerate}
    \item the functions $b_j^i,V^i: \R^{dm} \to \R^d$ are bounded with bounded first and second derivatives;
    \item the initial density $\mu_0$ satisfies assumption \ref{assumption: initial regularity} (explained below);
    \item the kernel $K$ satisfies assumption \ref{assumption: Kernel} (explained below).
    \end{enumerate}
    There exists unique weak solutions to \eqref{eq: main SDE} and \eqref{Eq: Particle System}; let $\mu$ be the law of the McKean--Vlasov limit \eqref{eq: main SDE} and let $\pi$ be the law of the particle system \eqref{Eq: Particle System}.
    Then for every $t>0$ and $r<1$, there exists a constant $C$ depending on $t,r,V,b,\mu_0,m,\sigma,K$ such that
    \[\|\mu^{\otimes k}[t]-\pi^k[t]\|_\textup{TV}\leq C \sqrt k \left(\frac{e^{C/(h^d \varepsilon^2)}}{\sqrt {n h^d} \varepsilon}+h+\varepsilon^{r\slash 2}\right).\]
    In particular, choosing $\varepsilon$ and $h$ optimally we obtain for every $r<1$,
    \[\|\pi^k[t]-\mu^{\otimes k}[t]\|_{\text{TV}}\leq C \frac{\sqrt k}{(\log n)^{\frac{r}{dr+4}}}\]
\end{theorem}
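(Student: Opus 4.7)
My plan is to start from Pinsker's inequality and exploit the subadditivity of relative entropy together with the product structure of $\mu^{\otimes k}$ and the exchangeability of $\pi$:
\[\|\pi^k[t]-\mu^{\otimes k}[t]\|_{\text{TV}}^{2} \leq 2\,H(\pi^k[t]\,|\,\mu^{\otimes k}[t]) \leq \tfrac{2k}{n}\,H(\pi[t]\,|\,\mu^{\otimes n}[t]).\]
The $\sqrt k$ prefactor in the statement is precisely the output of this reduction. What remains is to show $H(\pi[t]\,|\,\mu^{\otimes n}[t]) \leq C\, n\bigl(e^{2C/(h^d\varepsilon^2)}/(n h^d\varepsilon^2)+h^2+\varepsilon^r\bigr)$.

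\textbf{Girsanov and drift error.} Since the particle system and $n$ i.i.d.\ copies of the McKean--Vlasov process share the diffusion coefficient $\sqrt{2}\sigma$, Girsanov's theorem gives
\[H(\pi[t]\,|\,\mu^{\otimes n}[t])=\frac{1}{4\sigma^{2}}\,\E_{\pi}\!\int_{0}^{t}\sum_{k=1}^{n}\sum_{i=1}^{m}|D_{s}^{k,i}-\widetilde D_{s}^{k,i}|^{2}\,ds,\]
where $D^{k,i}_s$ is the regularised Nadaraya--Watson drift from \eqref{Eq: Particle System} and $\widetilde D_{s}^{k,i}=\sum_{j=1}^{m}\E[b_{j}^{i}(Y_s)\,|\,Y_s^{j}=X_s^{k,j}]$ is the exact conditional-expectation drift with $Y_s\sim\mu_s$. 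The problem is thereby reduced to a pointwise comparison of a regularised kernel estimator with a conditional expectation, integrated under the measure of the interacting system.

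\textbf{Bias/variance/truncation decomposition.} For each particle I would split the drift error into three pieces:
\begin{enumerate}
\item a \emph{kernel bias}, comparing $\E[b_{j}^{i}(Y_s)\,|\,Y_s^{j}=\cdot]$ with its mollified version $(b_{j}^{i}\mu_s)\ast K_h/(\mu_{s}^{j}\ast K_h)$; a Taylor expansion under sufficient smoothness of $\mu_s$ yields $O(h)$;
\item a \emph{truncation error} arising from the floor $\varepsilon$ in the denominator, supported on $\{\mu_{s}^{j}\ast K_h<\varepsilon\}$; integrability of $1/\mu_{s}^{j}$ combined with the bootstrap regularity gives $O(\varepsilon^{r/2})$ for any $r<1$;
\item a \emph{statistical error} between the empirical averages $\tfrac1n\sum_\ell(\cdots)$ and their expectations under $\mu_s^{\otimes n}$, of Nadaraya--Watson type.
\end{enumerate}
Parts (1) and (2) force us to show that $\mu_s$ has enough Sobolev regularity and a weak lower bound on compact sets; this is exactly the purpose of the bootstrap and energy estimates mentioned in the abstract.

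\textbf{Main obstacle: concentration under $\pi$ instead of $\mu^{\otimes n}$.} The delicate point is item (3). Standard $1/\sqrt{nh^d}$ bounds for the NW fluctuation hold under the product measure $\mu_s^{\otimes n}$, but the expectation in the Girsanov formula is under the interacting measure $\pi$. To change measures I would invoke the Donsker--Varadhan variational formula, $\E_{\pi}[F] \leq H(\pi\,|\,\mu^{\otimes n}) + \log \E_{\mu^{\otimes n}}[e^F]$, with $F$ proportional to the squared NW error; because each kernel summand is bounded by $\|K_h\|_\infty/\varepsilon \sim 1/(h^d\varepsilon)$, the resulting sub-Gaussian variance is of order $1/(h^d\varepsilon^2)$, which is the source of the factor $\exp(C/(h^d\varepsilon^2))$ in the final bound. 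Closing the resulting entropy inequality by a Gr\"onwall argument in time yields the announced estimate on $H(\pi[t]\,|\,\mu^{\otimes n}[t])$; optimising over $h,\varepsilon$ with the balance $h^{d}\varepsilon^{2}\sim 1/\log n$, both polynomial in $1/\log n$, gives the logarithmic convergence rate.
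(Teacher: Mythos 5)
The proposal's main structural deviation is to skip the intermediate (regularized) McKean--Vlasov problem~\eqref{eq: intermediate problem} entirely and estimate $\ent(\pi[t]\,\|\,\mu^{\otimes n}[t])$ in a single pass via Girsanov, Donsker--Varadhan, and Gr\"onwall. Unfortunately this collapses two Gr\"onwall arguments that the paper deliberately keeps separate, and the collapsed version does not close: the Donsker--Varadhan change of measure needed for your ``statistical'' piece requires the tilting parameter $\lambda$ in $\E_\pi[F]\le \tfrac{1}{\lambda}\ent(\pi_s\|\mu_s^{\otimes n})+\tfrac1\lambda\log\E_{\mu_s^{\otimes n}}[e^{\lambda F}]$ to satisfy $\lambda\lesssim n h^d\varepsilon^2$ (so that the exponential moment of the squared NW fluctuation is finite), which forces the Gr\"onwall coefficient in front of $\ent(\pi_s\|\mu_s^{\otimes n})$ to be of order $1/(h^d\varepsilon^2)$. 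Gr\"onwall then produces the factor $e^{Ct/(h^d\varepsilon^2)}$ multiplying \emph{every} contribution to the right-hand side, including the bias and truncation errors. Your bound therefore comes out as $\sqrt{k}\bigl(\tfrac{1}{\sqrt{nh^d}\varepsilon}+h+\varepsilon^{r/2}\bigr)e^{C/(h^d\varepsilon^2)}$, not the stated $\sqrt{k}\bigl(\tfrac{e^{C/(h^d\varepsilon^2)}}{\sqrt{nh^d}\varepsilon}+h+\varepsilon^{r/2}\bigr)$; the exponential is now also attached to $h$ and $\varepsilon^{r/2}$. One checks that this cannot be optimized to a vanishing rate: making the $n^{-1/2}$ term negligible requires $h^d\varepsilon^2\gtrsim 1/\log n$, and then $h\,e^{C/(h^d\varepsilon^2)}\gtrsim h\,n^{c}$ blows up unless $h$ is bounded below, so the bias cannot be sent to zero.

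This is precisely why the paper introduces the intermediate law $\rho$ and performs \emph{two} Gr\"onwall arguments. In Proposition~\ref{pr: particles to intermediate} the Donsker--Varadhan tilting for the NW fluctuation gives the Gr\"onwall coefficient $\sim 1/(h^d\varepsilon^2)$, but the inhomogeneity in that loop is itself of order $1/(h^d\varepsilon^2)$ rather than $n(h^2+\varepsilon^r)$; after dividing by $n$ one gets exactly $\tfrac{k}{n}\tfrac{C}{\varepsilon^2h^d}e^{C/(\varepsilon^2h^d)}$. In Proposition~\ref{pr: regularized process to MKV} the Gr\"onwall loop for $\ent(\mu[t]\,\|\,\rho[t])$ has a coefficient independent of $h,\varepsilon$ (because both drifts involve bounded conditional expectations, so the Donsker--Varadhan there is done with $\lambda=1$ and the $e^s\le 1+Cs$ trick), and produces $C(h^2+\varepsilon^p)$ cleanly. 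Apart from this structural point, your bias/truncation/statistical split and the Donsker--Varadhan mechanism for the statistical piece are essentially the same as in the paper, but two further steps you elide also matter: (i) the kernel bias is not a pointwise $O(h)$ Taylor estimate --- $\nabla^2\log\mu_s$ is controlled only in $L^3(\mu_s)$, integrated in time, via Lemma~\ref{lm: bound on time integral of relative entropy} (which gives $\int_0^T\ent(\mu_t\|\mu_t\star_j K_h)\,dt\le Ch^2$), and feeding this into your Girsanov expectation still requires a change of measure you do not spell out; (ii) the truncation estimate likewise uses the variational formula together with moment control, not just ``integrability of $1/\mu_s^j$''.
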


Unlike the case of weak interactions where it is possible to prove a convergence rate of order $k\slash n$ with local methods (see \cite{SharpChaosLacker2023}) or $\sqrt{k\slash n}$ with global ones \cite{JWEntropicChaos}, we obtain a logarithmic rate of convergence. This is common for models with moderate interactions (see for instance \cite{Meleard1988} or \cite{ChaintronChaos1}). 
\begin{remark}
    The bound on path space implies a bound on the time marginals by the data processing inequality; indeed for every $s\leq t$,
    \begin{equation*}
        \|\mu^{\otimes k}_s - \pi_s^k\|_{L^1(\R^d)}\leq \|\mu^{\otimes k}[t]-\pi^k[t]\|_\text{TV}.
    \end{equation*}
    where $\mu_s,\pi_s$ are the $s$-time marginals of $\mu,\pi$.
\end{remark}

\subsection{Applications}
Equation \eqref{eq: main SDE} may seem exotic and is certainly not a standard McKean--Vlasov equation. Nonetheless, similar models appear in many applied areas where a particle implementation is of interest.

The authors of \cite{PLD} study the following system of coupled SDEs:
\begin{align}
\label{eq: PLD}
\begin{split}
    dX_t &= \left(\E[\nabla_x c(X_t,Y_t)\mid X_t]\, - \nabla_x c(X_t,Y_t)\right) dt-\varepsilon \nabla U(X_t)\,dt+\sqrt{2\varepsilon} dW_t\\
    dY_t&= \left(\E[\nabla_y c(X_t,Y_t) \mid Y_t ]-\nabla_y c(X_t,Y_t)\right)dt-\varepsilon \nabla V(Y_t)\,dt+\sqrt{2\varepsilon}dB_t.
\end{split}
\end{align}
They argue that the above SDE in dimension $d=1$ is a Wasserstein gradient flow for the free energy
\[\int c(x,y)\,dP(x,y)+\varepsilon \ent(P\,\|\,e^{-U}\otimes e^{-V})\]
(where $\ent$ is relative entropy) constrained to remain in the space of couplings of $e^{-U}$ and $e^{-V}$; informally we can see equation \eqref{eq: PLD} as a Wasserstein gradient flow for the entropic optimal transport problem. They prove that its law converges to the entropic optimal transport coupling in every dimension. 

Our methods adapt seamlessly to the more general equation
\begin{align*}
    \begin{split}
        dX_t^i = \left(\E[\nabla_{x_i} c( X_t)\mid X^i_t] - \nabla_{x_i} c( X_t)\right) dt-\varepsilon \nabla U_i(X^i_t)\,dt+\sqrt{2\varepsilon} dW^i_t;\quad\quad i=1,\ldots m
    \end{split}
\end{align*}
which can be taken to be a Wasserstein gradient flow for the multi marginal entropic Optimal transport problem where the marginals are given by $e^{-U_i}, i=1\ldots m$ (see \cite[Section 1.3.3]{PLD}), in dimension $d=1$.\\

McKean--Vlasov equations with conditional expectations have also appeared in the study of diffusions on $m$-regular infinite graphs: \cite{local_Equation} study a stationary solution of the SDE
\begin{align}
\label{eq: localEquation}
\begin{split}
    dX_t &= -\left((m-1)\E[K'(X_t-Y_t)\mid X_t]\, + K'(X_t-Y_t)\right) dt- U'(X_t)\,dt+\sqrt{2} dW_t\\
    dY_t&= -\left((m-1)\E[K'(Y_t-X_t) \mid Y_t ]+K'(Y_t-X_t)\right)dt-  U'(Y_t)\,dt+\sqrt{2}dB_t.
\end{split}
\end{align}
This SDE has the important property that its stationary distribution corresponds to a $2$-vertex marginal of the stationary distribution of an infinite-dimensional SDE indexed by the vertices of the $m$-regular Cayley Tree $\mathbb T_m$.
 
A related problem is studied in \cite{hu2024htheorem} and \cite{hu2025case}, who consider the long time behavior of the $m$-regular Markovian Local Field Equation (MLFE):
\begin{align}
\label{eq: MLFE}
\begin{split}
    dX^0_t&=-\nabla U(X_t^0)\,dt-\sum_{i=1}^m\nabla W(X_t^0-X_t^i)\,dt+\sqrt{2}dW_t^0\\
    dX^i_t&=-\gamma_t(X_t^i,X_t^0)\,dt+\sqrt{2}dW_t^i; \,\, i=1,\dots,m.
\end{split}
\end{align}
where 
\[\gamma_t(x,y)=\E\left[\nabla U(X_t^1)+\sum_{i=1}^m\nabla W(X_t^1-X_t^i)\,\bigg \vert\,X_t^0=x,X_t^1=y\right].\]
The MLFE can be seen as a Markovian equivalent of a more complicated non-Markov diffusion process, the (Non-Markovian) Local Field Equation, that characterizes neighbourhood-marginal of a system of interacting diffusions on an $m$-regular graph.
This process does not quite fit in our framework due to the reversal of the inputs in $\gamma_t$; nonetheless we expect that our arguments could generalize to this different set up.\\

Our results also apply to the Adaptive Biasing Force Process studied in \cite{BiasingForce}:
\begin{align}
\label{eq:adaptive Biasing Force}
\begin{split}
    dX_t^1 &= -\partial_1 V(X_t)\,dt+\E[\partial_1 V(X_t)\mid X_t^1]\,dt+\,dW_t^1,\\
    dX_t^{-1}&=-\nabla_{-1} V(X_t)\,dt+\,dW_t^{-1},
\end{split}
\end{align}
where we write, for a vector in $\R^d$, $x=(x^1,x^{-1})$ and for a function $V:\R^d \to \R$, $\nabla V=(\partial_1 V,\nabla_{-1} V)$. Equation \eqref{eq:adaptive Biasing Force} is motivated by a computational procedure in the context of molecular dynamics. Working on the Torus, \cite{BiasingForce} provides a logarithmic rate of convergence (on bounded time intervals) for a Nadaraya--Watson based particle system analogous to \eqref{Eq: Particle System}. We extend their convergence result on the whole space and generalize it to a wider class of models; indeed, equation \eqref{eq:adaptive Biasing Force} has the intriguing property that the law of first marginal $X^1_t$ solves the heat equation, a feature that provides a lot of tractability on the Torus and that fails in other models.\\

The behavior of the system at infinity is of critical importance for the above models: the goal of \eqref{eq: PLD}, \eqref{eq:adaptive Biasing Force} and \eqref{eq: MLFE} is to sample from a stationary measure. Unfortunately our result break down for infinite time horizons. This is to be expected at the level of generality of Equation \eqref{eq: main SDE}: some sort of dissipativity is usually required to prove uniform in time propagation of chaos, and we do not assume any. But the difficulty of the problem goes beyond making the right assumptions: the conditional expectations imply that dissipativity assumptions on the coefficient $b$ do not translate immediately to the actual drift. Moreover the moderate interactions pose a challenge, since it is difficult to control the behavior of the particle systems uniformly in $h$ for large $T$. To the best of our knowledge there are no results on uniform in time propagation of chaos for moderate interactions. \\

More complicated Conditional McKean--Vlasov Models have found applications to the study of turbulent flow dynamics. In particular, \cite{BossyJabir2019} study the well posedness of the system
\begin{align*}
    \begin{split}
        dX_t&= b(X_t,Y_t)\,dt+\sigma(X_t)\,dW_t;\\
        dY_t&= \E[\ell(Y_t)\mid X_t]\,dt+\E[\gamma(Y_t)\mid X_t]\,dB_t.
    \end{split}
\end{align*}
Our results cover the simpler case where $\sigma$ and $\gamma$ are constant. This is in general not enough for an accurate simulation of turbulent flows, which is known to be a very hard problem. Another model for turbulent flows studied in \cite{BossyConditionalLagriangian} is 
\begin{align*}
    \begin{split}
        dX_t&= Y_t\,dt;\\
        dY_t&= \E[b(X_t,Y_t)\mid X_t]\,dt+\E[\sigma_t(X_t, Y_t)\mid X_t]\,dB_t.
    \end{split}
\end{align*}
This equation displays a degenerate diffusion coefficient that complicates the regularity estimates needed to close the argument. Our total variation estimates remain essentially correct if we assume $\sigma$ to be constant, but we cannot prove enough regularity of the solution density to check that the constant is finite. Similar models are studied in \cite{Bernardin2010},\cite{Bossy2015},\cite{Bossy2011}. \\

Another application of interest comes from stochastic local volatility models in finance, for example from \cite{markovianProjection}, \cite{djete2024nonregularmckeanvlasovequationscalibration}, and in book form, \cite[Chapter 11]{GuyonNonLinearOptionpricing}.  \cite{markovianProjection} studies the two dimensional SDE system
\begin{align*}
    \begin{split}
        dX_t&= b_1(X_t) \cdot \frac{h(X_t)}{\E[h(X_t)\mid Y_t]}\,dt + \sigma_1(X_t)\cdot \frac{f(X_t)}{\sqrt{\E[f^2(X_t)\mid Y_t]}}\,dW_t;\\
        dY_t&= b_2(Y_t)\,dt+\sigma_2(Y_t)\,dB_t.
    \end{split}
\end{align*}
Our arguments cannot handle the presence of interactions at the denominator of the diffusion coefficient: the total variation estimates do not hold, and extending the regularity result seems challenging. Recent independent work on this issue appeared during the completion of this article: the authors of \cite{SVLFritz} prove quantitative bounds on an Euler approximation for the time marginals of the one-dimensional SDE
\[dX_t= \sigma_t(X_t)\cdot \frac{\xi_t}{\E[\xi^2_t\mid X_t]}dW_t,\]
where $(\xi_t)_{t\geq 0}$ is a given stochastic process. They obtain a weak error rate for their numerical scheme with a precise dependence on the discretization and mollification parameters; their proofs are based on a new half-step scheme. Their error rates with respect to the particle approximation are comparable with ours. 
\subsection{Related Literature}

The problem of propagation of chaos is a classical one, and we refer the interested reader to the nice reviews \cite{ChaintronChaos1} and \cite{ChaintronChaos2} for a comprehensive introduction. We are part of the growing literature on propagation of chaos for equations with moderate interactions. This approach was pioneered (with qualitative results) by \cite{Oelschlger1985}, with first quantitative estimates provided by \cite{Meleard1988} with a logarithmic rate. More recent example are the analysis of propagation of chaos for the viscous Burger's equation carried out in \cite{TomasevicBurgers} and the study of the convergence rate for moderately interacting particle systems driven by an $\alpha$-stable process \cite{JabirModerateInteractions}. The article \cite{Holzinger} is closer to us in approach: they prove a quantitative convergence rate in total variation for a particle approximation of the viscous porous medium equation using entropy methods. Nonetheless, the finer techniques involved in the estimates are quite different: they combine a powerful technical estimate from \cite{OeschlagerFluctuations} with Moser-type inequalities, while we use information-theoretic inequalities. Moreover, we cannot rely on the theory of semilinear PDE to prove the required regularity of the solution density. Let us also mention the impressive follow up paper \cite{HolzingerFluctuations} which proves a fluctuation type result for a moderate interactions model with singular potential. \\

Another class of equations with conditional expectations come from the Cucker-Smale (CS) flocking model (\cite{Cucker-Smale}, \cite{Cucker-Smale2}) with strong alignment introduced by \cite{Motsch2011-Alignment}. These models fall outside of the realm of applicability of our result because of the degeneracy of diffusion coefficient; for recent progress on error rate for an interacting particle system representation of the CS model, see \cite{wang2024errorestimationmeanfieldlimit}.

\subsection{Bias-Variance Trade-off}
A classical method of nonparametric statistics is to analyze separately the bias introduced by the mollification of the empirical measure and the decrease in variance it causes. This approach was also pioneered in the study of equations with moderate interactions by \cite{Oelschlger1985}; more recently, it was used to great effect in \cite{Holzinger} to prove a very tight convergence rate in total variation for a particle approximation for the viscous porous medium equation. We utilize this method as well, and thus introduce an intermediate problem: 
\begin{align}
\label{eq: intermediate problem}
\begin{split}
        dX^i_t &=\sum_{j=1}^m \frac{\int b_j^i(X_t^j,y^{-j})K_h(X_t^j-y^j)\,d\rho_t(y)}{\max\left(\varepsilon,\int K_h(X_t^j-y^j)\,d\rho_t(y)\right)}\,dt+V^i(X_t)\,dt+\sqrt2 \sigma \,dW_t^i;\\
        \text{Law}(X_t)&=\rho_t;\\
        \text{Law}(X_{0})&=\mu_0.\\
        i&=1,\ldots,m.
        \end{split}
\end{align}
Informally, the intermediate problem \eqref{eq: intermediate problem} is the large $n$ limit of a marginal of the particle system \eqref{Eq: Particle System} when we keep the mollification parameter $h$ fixed. The intermediate problem is a McKean--Vlasov equation, but unlike \eqref{eq: main SDE}, it is completely non-local and thus more tractable. We can then use the triangle inequality to estimate
\[\|\mu^{\otimes k}[T]-\pi^k[T]\|_\text{TV}\leq \underbrace{\|\mu^{\otimes k}[T]-\rho^{\otimes k}[T]\|_\text{TV}}_{\text{Bias}}+\underbrace{\|\rho^{\otimes k}[T]-\pi^k[T]\|_\text{TV}}_{\text{Variance}}\]

The analysis of the two terms is rather different. The variance term requires a propagation of chaos result for equation \eqref{eq: intermediate problem}. While the drift is somewhat non standard, the intermediate problem features weak interactions and is thus easier to handle, and we can adapt the methods from \cite{JWEntropicChaos} to obtain a rate of convergence. In our particular setting there is no need for a large deviation principle, but we instead need to provide quantitative convergences rates for the (mollified) Nadaraya--Watson estimator on an exponential scale. \\

We handle the bias term by studying the relative entropy $\ent(\mu_t\,\|\,\rho_t)$. The starting point is to recognize that the drift of the intermediate problem remains a conditional expectation (ignoring the regularization for the moment): indeed we can rewrite
\begin{equation*}
\frac{\int b^i_j(x^j,y^{-j})K_h(x^j-z)\,d\rho_t(z,y^{-j})}{ \int K_h(x^j-z)\,d\rho_t(z,y^{-j})}= \E_{\rho}\left[b^i_j(X^j_t+ Z,X^{-j}_t)\mid X^j_t+ Z=x^j\right],    
\end{equation*}
where $Z$ is a random variable with distribution $K_h$ that is independent of $(X_t,Y_t)$. The problem then becomes comparing two conditional expectations. We do so by using information theoretic inequalities to relate the conditional distributions and then cancel out the denominator; this reduces the problem to estimating  $\ent(\mu_t\,\|\,\mu_t\star_i K_h)$ for every $i$ (where $\star_i$ denotes convolution on the $i$-th space coordinate), which requires a good understanding of the regularity properties of $\mu_t$.

\subsection{Regularity}
It is well known in statistics that the quality of nonparametric estimation depends on the regularity of the object to be estimated. In the same way, we will require some regularity for the density of \eqref{eq: main SDE} and the conditional expectations terms. This difficult problem is common in the literature of quantitative propagation of chaos for equations with moderate interactions, but better studied models (such as the Viscous Burger's equation studied in \cite{TomasevicBurgers} or the Viscous Porous medium analyzed by \cite{Holzinger}) can rely on the theory of semilinear parabolic PDEs from, for example, \cite{ladyženskaja1988linear}. Regularity for equations that involve conditional expectations is instead not well understood, a source of difficulty encountered in previous works in the area such as \cite{PLD} and \cite{hu2024htheorem}.\\

For the specific case of the Nadaraya--Watson estimator, references such as \cite{StoneNPR} and (in book form) \cite{nonparametric_regression_book} give convergence rates for H\"older continuous conditional expectation functions; further results are available under the assumption that the conditional expectation function has the appropriate Sobolev regularity (see \cite{Tsybakov2009}). In general, control over the second derivatives is required to obtain good properties. Waving away differentiability issues for now, an informal computation tells us that 
\begin{align*}
    \partial^2_\ell \E[b_i^j(X_t)\mid X^j_t=x]&=\E[b_i^j(X_t)\partial^2_{\ell}\log \mu_t(X_t) \mid X^j_t=x]\\    &+\E[\partial^2_{\ell} b_i^j(X_t)\mid X^j_t=x]+\text{Terms with lower order derivatives}.
\end{align*}
We will thus need to control the logarithmic Hessian of $\mu_t$ under conditional expectations. 

The problem of estimating the logarithmic Hessian has appeared in previous works on propagation of chaos, for instance \cite{WangScoreEstimates}, \cite{UiTLSI} and \cite{modulatedenergy}. The approaches in \cite{wang2024errorestimationmeanfieldlimit} and \cite{modulatedenergy} are based on the maximum principle; this method seems out of reach because working with conditional expectations terms at a pointwise level is complicated (for instance, conditional expectations do not preserve local bounds).  On the other hand, \cite{UiTLSI} utilizes a stochastic control representation of the log density  which requires regularity of the coefficients that is not available in our setting. 
We must instead develop novel estimates for the moments of the weak derivative $\E[|\partial^2_\ell \E[b^i_j(X_t)\mid X_t]|^p]$ (and indeed prove that the weak derivatives exist in the first place).\color{black} \\

Our approach is based on a bootstrap argument. The $i$-th (weak) derivatives of the drift depend on the $i$-th (weak) derivative of $\log\mu_t$, which in turn depends on the $(i-1)$-th derivative of the drift. This allows us to bootstrap our way from control of the zero-th order terms to third derivatives, the most that we require.\\

Finally, at a technical level, we prove the required regularity by energy estimates in the spirit of \cite[Chapter 7]{bogachev2022fokker} at higher orders and a careful mollification argument. Similar estimates have previously appeared in \cite{Chaintron2024}, in a different context (with well behaved drift but non-trivial diffusion coefficient), but were obtained by a time reversal argument.

\subsection{The assumptions \ref{assumption: initial regularity} and \ref{assumption: Kernel}}
We make the following assumptions on the kernel $K$;
\begin{assumption}{K}
\label{assumption: Kernel}
Let $K:\R^d \to \R_+$ be a probability density that satisfies
\begin{align*}
    \int_{\R^d} K(z)z\,dz=0\,;\, \int_{\R^d} K(z) e^{|z|^2/4}\,dz<\infty\,;\,K(z)=K(-z)\,;\,\|K\|_\infty<\infty.
\end{align*}
\end{assumption}
These assumptions are satisfied by natural choices of kernels. We denote
\[K_h(z)=\frac{1}{\sqrt{h^d}}K\left(\frac{z}{\sqrt h}\right),\]
so that $\int |x|^2\,dK_h \leq Ch$ for some $C$ that depends on $K$.\\

We will repeatedly abuse notation and write $x=(x^j,x^{-j})$ for $j \in [m]$, where $x\in \R^{dm},x^j \in \R^d$ and $x^{-j}\in \R^{d(m-1)}$.

For probability densities $\alpha,\beta \in \mathcal{P}(\R^{dm})$ and $\gamma \in \mathcal{P}(\R^d)$, we denote their convolution as 
\begin{align*}
    \alpha\star\beta(x)= \int \alpha(x-z)\beta(z)\,dz\,\,;\,\,\alpha\star_j \gamma = \int \alpha(x^j-z,x^{-j})\gamma(z)\,dz.
\end{align*}
We recall here the definition of the $p$-divergence as
\begin{align*}
    \D_p(\alpha\,\|\,\beta)=&\begin{cases}\int \left(\frac{d\alpha}{d\beta}\right)^p\,d\beta \text{ if }\alpha\ll\beta;\\
    \infty \text{ otherwise},
    \end{cases}
\end{align*}
and the related Chi square divergence as $\chi^2(\alpha\,\|\,\beta)=D_2(\alpha\,\|\,\beta)-1$.

As is common in the literature on propagation of chaos for moderate interactions, we require substantial regularity from the starting distribution $\mu_0$. We will often abuse notation and identify a measure with its density.

\begin{assumption}{R}
\label{assumption: initial regularity}
The law $\mu_0$ of $X_0$ admits a density that satisfies the following:
\begin{enumerate}
    \item For each $i \in [m]$, 
    \[\int_0^1\int_0^1\int_{\R^d}\int_{\R^d} D_4(\mu_0 \star_i \delta_{\sqrt h (s u z_1+  z_2)}\,\|\,\mu_0)\,dK(z_1)\,dK(z_2)\,ds\,du<\infty.\]
    \item For every $p\geq 1$, $\E[|X_0|^p]<\infty$.
    \item The initial density $\mu_0$ is in $H^2(\R^d)$  and 
    \[\int_{\R^{dm}}|\log \mu_0(x)|+\left|\Delta \mu_0(X_0)/\mu_0(x)\right|^2+|\nabla \log \mu_0(x)|^4 \,d\mu_0(x)<\infty.\]
    \item The density $\mu_0$ is locally bounded away from $0$ and $\mu_0\in L^\infty(\R^d)$.
\end{enumerate}
\end{assumption}
Assumption \ref{assumption: initial regularity}(1) is technical. It requires the initial distribution $\mu_0$ to have small enough $4$-Divergence in expectation with respect to a $K_h$-perturbation. This condition arises from the use of mollification, and essentially tells us that the mollification does not change the initial distribution too much. While this condition is certainly non-standard, we will provide common examples of distributions that satisfy it; it is essentially a sub-Gaussian requirement for the logarithmic gradients of $\mu_0$.\bigskip\\
Assumption \ref{assumption: initial regularity}.2 is related to the $\varepsilon$-approximation of the denominator in the particle system \eqref{Eq: Particle System}. This approximation is exact on a set with a large amount of mass (that is, the set where $\mu_t^i$ is greater than $\varepsilon$), and is thus quite effective. Having all polynomial moments be finite implies a strong decay of $\mu_0$ (and thus of $\mu_t$), and assures us that the approximation is good on a large portion of space.\bigskip\\
Finally, Assumption \ref{assumption: initial regularity}.3 is required in order for the Nadaraya-Watson estimator to approximate the true conditional expectation well. Classical results from nonparametric statistics show that optimal convergence rates of the Nadaraya-Watson estimator require the conditional expectation function to have well behaved derivatives (see, for example, \cite{StoneNPR} or \cite[Chapter 5]{nonparametric_regression_book} for a book form introduction). Assumption R.3 guarantees that $\mu_0$ has enough regularity to obtain a good convergence rate; a major hurdle of the analysis is proving that the regularity propagates in time. \bigskip \\
Assumption \ref{assumption: initial regularity} is surely strong, but it is satisfied by many natural distributions since $\sqrt h$ tends to be small.
\begin{example}
Let $\mu_0$ be sub-exponential, meaning that for some $\delta>0$ it holds
\begin{equation*}
    \int_{\R^{dm}} e^{\delta |x|}\mu_0(x)\,dx<\infty.
\end{equation*}
Assume $\mu_0$ is twice differentiable with bounded score $\nabla \log \mu_0$ and $|\Delta \log \mu_0(x)|\leq C(1+|x|^p)$ for some $p\geq 1$, and assume that $0<\mu_0<C$ for some $C>0$. Conditions \ref{assumption: initial regularity}(2-4) clearly hold. 
For an arbitrary $z$ we can compute
\begin{align*}
    \log \mu_0(x+z)-\log \mu_0(x)=\int_0^1 \nabla \log \mu_0(x+sz)\cdot z\,ds,
\end{align*}
which implies
\[\frac{\mu_0(x+z)}{\mu_0(x)}= \exp\left\{\int_0^1 \nabla \log \mu_0(x+sz)\cdot z\,ds\right\}.\]
We thus obtain the bound
\begin{align*}
    \D_4(\mu_0\star \delta_{z\sqrt h}\,\|\,\mu_0)&=\int_{\R^{dm}} \exp\left\{4\sqrt{h}\int_0^1 \nabla \log \mu_0(x+\sqrt hzs)\cdot z\,ds\right\}\,d\mu_0(x)\\
    &\leq e^{4\sqrt{h}\|\nabla \log \mu_0\|_\infty |z|}.
\end{align*}
And thus assumption \ref{assumption: initial regularity}(1) is satisfied if $K$ satisfies Assumption \ref{assumption: Kernel}. 
\end{example}
\begin{example}
    Let $\mu_0$ be subgaussian, meaning that for some $\delta>0$ it holds 
    \[\int_{\R^{dm}}e^{\delta |x|^2}\mu(x)\,dx<\infty.\]
    Assume $\mu_0$ is twice differentiable, $0<\mu_0<C$ and it satisfies $-LI\leq\nabla^2 \log \mu_0\leq LI$ for some $L,C>0$. Conditions \ref{assumption: initial regularity}(2-4) are satisfied. By \cite[Corollary 5.3]{ChewiHarnack}, $\mu_0$ satisfies the reverse transport inequality
    \begin{align*}
        D_4(\mu_0 \star \delta_{z}\,\|\,\mu_0)\leq e^{2L|z|^2}.
    \end{align*}
    Taking $z=\sqrt h (s u z_1+ z_2)$ with $s,u\leq 1$, we find that 
    \begin{align*}
\D_4&(\mu_0\star \delta_{\sqrt h (s u z_1+ z_2)}\,\|\,\mu_0)\leq e^{4Lh|z_1|^2+4Lh|z_2|^2}.
    \end{align*}
    Hence Assumption \ref{assumption: initial regularity}(1) is satisfied if $8Lh<1$, assuming Assumption \ref{assumption: Kernel} (or that $K$ is compactly supported). 
\end{example}

Finally we discuss in more detail the assumptions of Theorem \ref{th: main}. The assumption of boundedness of $V$ can likely be relaxed at the cost of a more complicated proof for the regularity properties of $\mu$. On the other hand, relaxing boundedness of $b$ seems difficult, as we make generous use of information theoretic inequalities that do not translate as well to unbounded conditional expectation terms. Moreover, we note that even existence of a solution for the McKean--Vlasov problem \eqref{eq: main SDE} is difficult without bounded $b$. 
\begin{remark}
    Assuming only \ref{assumption: initial regularity}(2), it can be showed that $\mu_t$ will satisfy assumption \ref{assumption: initial regularity}(3) and \ref{assumption: initial regularity}(4) for almost every $t>0$, regardless of $\mu_0$. Indeed \ref{assumption: initial regularity}(3) is a  consequence of the analysis in Section \ref{sect: regularity estimates}, while \ref{assumption: initial regularity}(4) comes from the usual regularity properties of Fokker--Planck equations. 
\end{remark}

\subsection{Outline}
The rest of the article is organized as follows. In Section \ref{section: particles to intermediate} we analyze the variance term $\|\pi^k-\rho^{\otimes k}\|_\text{TV}$ and thus the convergence rate of the particle system \eqref{Eq: Particle System} to the intermediate problem \eqref{eq: intermediate problem}. Section \ref{section: intermediate to McKean--Vlasov} studies the bias term $\|\mu^{\otimes k}-\rho^{\otimes k}\|_\text{TV}$ and quantifies the distance between the intermediate problem \eqref{eq: intermediate problem} and the McKean--Vlasov limit \eqref{eq: main SDE} as a function of the mollification parameter $h$. Section \ref{sect: regularity estimates} contains the regularity estimates that are required to close the proofs in Section \ref{section: intermediate to McKean--Vlasov}. Appendix \ref{appendix: well posedness} contains well-posedness result for equation \eqref{eq: main SDE}, and we relegate some routine but necessary calculations to Appendix \ref{Appendix: mollification estimates}.
\section{Particle System to Intermediate Problem}
\label{section: particles to intermediate}
In this section we study the convergence rate of the particle system \eqref{Eq: Particle System} to the intermediate problem \eqref{eq: intermediate problem} as a function of the free parameters $h,\varepsilon$ and the number of particles $n$. 
For brevity, we will define $\hat b^i_j: \R^d\times \mathcal{P}(\R^{dm})\to \R^d$ as
\[\hat{b}^i_j (x^j,\nu)= \frac{\int b^i_j(x^j,y^{-j})K_h(x^j-y^j)\,d\nu(y)}{\max\left(\varepsilon, \int K_h(x^j-y^j)\,d\nu(y)\right)},\]
Finally, for $\boldsymbol x=(x_i)_{i=1}^n \in \R^{n \times dm }$, we denote the empirical measure of $\boldsymbol x$ as
\[L_n(\boldsymbol x)=\frac1n \sum_{i=1}^n \delta_{x_{i}}\in \mathcal{P}(\R^{dm}).\] 

This allows us to rewrite equations \eqref{Eq: Particle System} and \eqref{eq: intermediate problem} as
\begin{align*}
    \begin{split}
        dX^{k,i}_{t} &= \sum_{j=1}^m\hat b^i_j (X^{k,j}_{t}, L_n(\boldsymbol X_t))\,dt + V^i(X_{t}^k)\,dt+\sqrt2 \sigma \,dW_{t}^{k,i}\\
        \text{Law}(\boldsymbol X_0)&= \mu_0^{\otimes n};\\
    i&=1,\ldots,m; \,\,k=1,\ldots,n.
\end{split}
\end{align*}
and
\begin{align*}
    \begin{split}
        dX^i_t &= \sum_{j=1}^m\hat b^i_j (X_t, \rho_t)\,dt + V^i(X^i_t)\,dt+\sqrt2 \sigma \,dW^i_t\\
        \text{Law}(X_0)&=\mu_0;\\
    \text{Law}(X_t)&=\rho_t;\\
    i&=1,\ldots,m.
\end{split}
\end{align*} 
\begin{lemma}
\label{lm: well posedness}
Assuming that $b,V$ are bounded and that $K$ satisfies assumption \ref{assumption: Kernel}, equations \eqref{Eq: Particle System} and \eqref{eq: intermediate problem} have unique weak solutions for every $\mu_0 \in \mathcal{P}(\R^{dm})$.
\end{lemma}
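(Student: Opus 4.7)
The plan is that both well-posedness claims follow once we observe that the regularized drift is uniformly bounded. Indeed, since $b$ is bounded and $K_h \geq 0$,
\[|\hat b^i_j(x^j,\nu)| \leq \|b\|_\infty \cdot \frac{\int K_h(x^j-y^j)\,d\nu(y)}{\max(\varepsilon,\int K_h(x^j-y^j)\,d\nu(y))} \leq \|b\|_\infty\]
uniformly in $x^j \in \R^d$ and $\nu \in \mathcal{P}(\R^{dm})$. For the particle system \eqref{Eq: Particle System}, this means that $\boldsymbol x \mapsto \hat b^i_j(x^{k,j},L_n(\boldsymbol x)) + V^i(x^k)$ is a bounded Borel function on $\R^{n \times dm}$, so weak existence and uniqueness in law follow directly from Girsanov's theorem applied to the reference system $d\boldsymbol X_t = \sqrt 2\sigma\, d\boldsymbol W_t$ with initial law $\mu_0^{\otimes n}$; Novikov's condition is automatic by boundedness.

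For the intermediate problem \eqref{eq: intermediate problem} the plan is to run a contraction argument on flows of laws. Define
\[\Phi : C([0,T],\mathcal{P}(\R^{dm})) \to C([0,T],\mathcal{P}(\R^{dm}))\]
by setting $\Phi(\tilde\rho)_t$ to be the time-$t$ marginal of the (unique-in-law, again by Girsanov) weak solution of the decoupled SDE
\[dX^i_t = \sum_{j=1}^m \hat b^i_j(X^j_t,\tilde\rho_t)\,dt + V^i(X_t)\,dt + \sqrt 2\sigma\,dW^i_t,\quad X_0 \sim \mu_0.\]
A fixed point of $\Phi$ is exactly a weak solution of \eqref{eq: intermediate problem}, and uniqueness of the fixed point is equivalent to uniqueness in law. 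The key Lipschitz estimate is that, writing $\hat b = N/\max(\varepsilon,D)$ with $N(\nu) = \int b^i_j(x,y^{-j})K_h(x-y^j)\,d\nu(y)$ and $D(\nu) = \int K_h(x-y^j)\,d\nu(y)$ both linear in $\nu$ with kernels bounded by $\|b\|_\infty\|K_h\|_\infty$ and $\|K_h\|_\infty$ respectively, one obtains
\[\sup_{x \in \R^d}|\hat b^i_j(x,\nu) - \hat b^i_j(x,\nu')| \leq C_{\varepsilon,h,K,b}\,\|\nu-\nu'\|_\textup{TV}\]
using $\max(\varepsilon,D) \geq \varepsilon$ and $|N| \leq \|b\|_\infty\|K_h\|_\infty$.

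Combining this estimate with the Girsanov formula for the relative entropy on path space and Pinsker's inequality yields, for any two flows $\tilde\rho,\tilde\rho'$,
\[\|\Phi(\tilde\rho)[t]-\Phi(\tilde\rho')[t]\|_\textup{TV}^2 \leq C \int_0^t \|\tilde\rho_s-\tilde\rho'_s\|_\textup{TV}^2\,ds,\]
so $\Phi$ is a contraction on $C([0,T],\mathcal{P}(\R^{dm}))$ endowed with the weighted norm $\sup_{t\leq T}e^{-\lambda t}\|\cdot\|_\textup{TV}$ for $\lambda$ large enough, and Banach's fixed point theorem delivers a unique fixed point. The main (and really only) subtlety is the above Lipschitz estimate for $\hat b$, whose constant blows up like $\varepsilon^{-2}$: this is harmless for fixed $\varepsilon > 0$ but emphasizes that the $\varepsilon$-regularization is essential here, and that well-posedness of the un-regularized McKean--Vlasov equation \eqref{eq: main SDE} is a significantly more delicate matter (addressed separately in Appendix \ref{appendix: well posedness}).
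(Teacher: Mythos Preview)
Your proof is correct and follows essentially the same approach as the paper: both rest on the observation that $\hat b^i_j$ is uniformly Lipschitz in total variation with respect to the measure argument, after which the paper simply cites \cite[Theorem~2.4]{StrongPropagationOfChaos} for well-posedness of the intermediate problem where you spell out a Banach fixed-point argument, and both handle the particle system via Girsanov. The only cosmetic difference is that the paper's decomposition yields a Lipschitz constant of order $\varepsilon^{-1}$ rather than your $\varepsilon^{-2}$, which is immaterial for the well-posedness claim.
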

\begin{proof}
    Take $\zeta,\xi \in \mathcal{P}(\R^{dm})$. Applying the triangle inequality and using the boundedness of $b^i_j$ and of the denominator we get
    \begin{align}
    \label{eq: bound on difference of regularized estimator}
    \begin{split}
        |\hat b_j^i(x^j,\zeta)-\hat b_j^i(x^j,\xi)|&\leq \frac{1}{\varepsilon}\left|\int_{\R^{dm}}b_j^i(x^j,y^{-j})K_h(x^j-z)d(\zeta(z,y^{-j})-\xi(z,y^{-j}))\right|\\
        &+\frac{\|b^i_j\|_\infty}{\varepsilon} \left|\int_{\R^{dm}} K_h(x^j-z)d(\zeta(z,y^{-j})-\xi(z,y^{-j}))\right|.
    \end{split}
    \end{align}
    Since $K_h$ and $b^i_j$ are bounded, we obtain by the definition of total variation
    \begin{align*}
        |\hat b_j^i(x^j,\zeta)-\hat b_j^i(x^j,\xi)|&\leq \frac{(\|b^i_jK_h\|_{\infty}+\|K_h\|_\infty)}{\varepsilon}\|\zeta-\xi\|_{\text{TV}}.
    \end{align*}
    Now we can use \cite[Theorem 2.4]{StrongPropagationOfChaos} to prove that the problem \eqref{eq: intermediate problem} has a unique weak solution for each $\mu_0 \in \mathcal{P}(\R^{md})$. Weak well-posedness for the particle system \eqref{Eq: Particle System} follows by boundedness and a standard Girsanov transformation. 
\end{proof}

We can now check a subgaussian bound on $\hat b^i_j$ to obtain convergence of the particle system to the regularized problem. We use ideas borrowed from \cite{JWEntropicChaos}, and in particular we work at the level of relative entropy. Recall that the relative entropy between two probability measures $\alpha,\beta \in \mathcal P(\R^{dm})$ is defined as
\begin{align*}
    \ent(\alpha\,\|\,\beta)=\begin{cases}
        \int \log\frac{d\alpha}{d\beta}d\alpha \text{ if }\alpha \ll \beta;\\
        \infty \text{ otherwise}.
    \end{cases}
\end{align*}
We will repeatedly use some well known inequalities for information divergences. In particular we will use the Donsker--Varadhan variational representation of relative entropy, which states that for every bounded measurable $\varphi:\R^d \to \R$ we have

\[\int \varphi(x) \,d\alpha(x) \leq \ent(\alpha\,\|\,\beta)+\log \int e^{\varphi(x) }\,d\beta(x).\]

\begin{proposition}[Variance Bound]
\label{pr: particles to intermediate}
Under the assumptions of Theorem \ref{th: main}, there exists a constant $C=C(T,b,\mu_0,K,\sigma,m)$ such that 
\label{lm:subgaussian concentration}
\[\ent(\pi^k[T]\,\|\,\rho^{\otimes k}[T]) \leq \frac{Ce^{\frac{C}{h^d \varepsilon^2}}}{\varepsilon^2h^d}\cdot \frac{k}{n}.\]
\end{proposition}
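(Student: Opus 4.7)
The plan is to adapt the entropy method of Jabin and Wang \cite{JWEntropicChaos} to this regularized setting. First, since $\pi$ is exchangeable and $\rho^{\otimes n}$ is a product measure, the subadditivity inequality of Hauray--Mischler yields
\[\ent(\pi^k[T]\,\|\,\rho^{\otimes k}[T]) \leq \frac{k}{n}\ent(\pi[T]\,\|\,\rho^{\otimes n}[T]),\]
so it suffices to bound the full path entropy $H(T) := \ent(\pi[T]\,\|\,\rho^{\otimes n}[T])$ uniformly in $n$ by the prefactor of the claim. Since both SDEs share the common diffusion $\sqrt{2}\sigma$ and have bounded drifts (Novikov is trivial), Girsanov's formula gives the identity
\[H(T) = \frac{1}{4\sigma^2}\,\E_\pi\int_0^T \sum_{k=1}^n\sum_{i=1}^m \bigg|\sum_{j=1}^m \big[\hat b^i_j(X^{k,j}_t, L_n(\boldsymbol X_t)) - \hat b^i_j(X^{k,j}_t, \rho_t)\big]\bigg|^2 dt.\]

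Next, I would use the estimate \eqref{eq: bound on difference of regularized estimator} from the proof of Lemma \ref{lm: well posedness}: the $\varepsilon$-regularization of the denominator bounds each drift difference in the integrand by $\varepsilon^{-1}$ times the sum of two centered empirical averages, namely $\frac{1}{n}\sum_\ell b^i_j(X^{k,j}_t, X^{\ell,-j}_t)K_h(X^{k,j}_t-X^{\ell,j}_t)$ minus its $\rho_t$-expectation, and the analogous expression with $K_h$ alone. To transfer the resulting integrand from $\E_\pi$ to the product measure $\rho_t^{\otimes n}$, I would apply the Donsker--Varadhan variational principle on each time slice: for every $\eta > 0$,
\[\E_{\pi_t}\bigg[\sum_{k,i}|D^{k,i}_t|^2\bigg] \leq \frac{1}{\eta}\ent(\pi_t\,\|\,\rho^{\otimes n}_t) + \frac{1}{\eta}\log \E_{\rho^{\otimes n}_t}\bigg[\exp\bigg(\eta\sum_{k,i}|D^{k,i}_t|^2\bigg)\bigg],\]
and then use $\ent(\pi_t\,\|\,\rho^{\otimes n}_t) \leq H(t)$ (data processing for projection to time~$t$) to obtain a Gronwall-ready inequality after integrating in $t$.

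The key estimate is the exponential moment under $\rho^{\otimes n}_t$. There the particles are i.i.d.\ from $\rho_t$, so conditionally on $X^k_t$ each empirical fluctuation is a sum of $n$ (essentially) i.i.d.\ centered bounded random variables of size $\|b\|_\infty\|K_h\|_\infty/n = O(h^{-d/2}/n)$; Hoeffding makes it sub-Gaussian with parameter $O(h^{-d}/n)$, so its square is sub-Gamma with exponential moments controlled on the scale $\eta h^{-d}$. The squared fluctuations for distinct $k$ are \emph{not} independent (they share common particles), but Jensen's inequality $e^{\eta\sum_k Z_k} \leq n^{-1}\sum_k e^{\eta n Z_k}$ applied particle by particle reduces the sum to a single-particle moment, giving
\[\log\E_{\rho^{\otimes n}_t}\bigg[\exp\bigg(\eta\sum_{k,i}|D^{k,i}_t|^2\bigg)\bigg] \leq C\]
whenever $\eta$ is below a critical threshold of order $\varepsilon^2 h^d/\|b\|_\infty^2$. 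Taking $\eta$ of that size and running Gronwall's inequality on $H(T) \leq (4\sigma^2\eta)^{-1}\int_0^T H(t)\,dt + CT/(4\sigma^2\eta)$ produces $H(T) \lesssim (\varepsilon^2 h^d)^{-1}\exp(CT/(\varepsilon^2 h^d))$, which combined with the first step gives the proposition.

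The hard part is the exponential-moment bound: one must simultaneously exploit the $1/\sqrt{n h^d}$ concentration gain from the i.i.d.\ mollified empirical averages and absorb the poor $1/\varepsilon$ Lipschitz constant of the regularized estimator $\hat b$, while sidestepping the lack of independence between the squared fluctuations of different particles. The exponential factor $\exp(C/(\varepsilon^2 h^d))$ in the final bound is exactly the Gronwall amplification produced by the rate $1/\eta \sim 1/(\varepsilon^2 h^d)$, and is what later forces the logarithmic choice $\varepsilon^2 h^d \sim 1/\log n$ in Theorem \ref{th: main}.
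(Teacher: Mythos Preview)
Your proposal is correct and follows essentially the same Jabin--Wang entropy scheme as the paper: Girsanov entropy identity, Donsker--Varadhan change of measure to $\rho_t^{\otimes n}$, sub-Gaussian concentration of the mollified empirical averages (conditioning on the reference particle and using boundedness $\|K_h\|_\infty\lesssim h^{-d/2}$), then Gronwall and subadditivity. The only cosmetic difference is that the paper invokes exchangeability of $\pi$ \emph{before} Donsker--Varadhan to reduce $\sum_{k}|D^{k,i}_t|^2$ to $n|D^{1,i}_t|^2$, whereas you apply Donsker--Varadhan to the full sum and then decouple via the Jensen trick $e^{\eta\sum_k Z_k}\le n^{-1}\sum_k e^{\eta n Z_k}$; with $\eta=1/c$ these two routes yield literally the same exponential moment $\E_{\rho^{\otimes n}}[e^{(n/c)|D^{1}|^2}]$ and hence the same threshold $c\sim 1/(\varepsilon^2 h^d)$ and final bound.
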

    \begin{proof}
    Let $\boldsymbol X_t=(X_{t}^1,\ldots,X_{t}^n)$, with $X_{t}^k \in \R^{dm}$ for each $k$ be the coordinate process on path space on $\R^{dm}$; we will denote by $\E_\pi$ expectations taken under the path law $\pi$, and similarly for $\rho^{\otimes n}$. A by now standard entropy estimates (for instance, as done in \cite[Section 3]{StrongPropagationOfChaos}), combined with exchangeability and the triangle inequality yields
    \begin{align}
    \label{eq: first bound on entropy particle system }
    \begin{split}
        \ent (\pi[T]\,\|\,\rho^{\otimes n}[T])&=\frac{n}{4\sigma^2}\int_0^T\sum_{i=1}^m \E_{\pi}\bigg[\Big|\sum_{j=1}^m\hat b^i_j(X_t^{1,j},L(\boldsymbol{X}_t))-\hat b^i_j(X_t^{1,j},\rho_t)\Big|^2\bigg]\,dt\\
        &\leq \frac{nm}{4\sigma^2}\sum_{i=1}^m\sum_{j=1}^m \int_0^T \E_{\pi}\left[\left|\hat b^i_j(X_t^{1,j},L(\boldsymbol{X}_t))-\hat b^i_j(X_t^{1,j},\rho_t)\right|^2\right]\,dt.
    \end{split}
    \end{align}
     Use the variational representation of Entropy to obtain for some $c>0$, 
    \begin{align}
    \label{eq: particles entropy}
    \begin{split}
        \ent (\pi[T]\,\|\,\rho^{\otimes n}[T])&\leq \frac{cm^3}{4\sigma^2} \int_0^T\ent(\pi_t\,\|\,\rho_t^{\otimes n})\,dt\\
        &+\frac{mc}{4\sigma^2}\sum_{i,j}\int_0^T\log \E_{\rho_t^{\otimes n}}\left [\exp\left\{\frac nc\left|\hat b^i_j(X_t^{1,j},L(\boldsymbol{X}_t))-\hat b^i_j(X_t^{1,j},\rho_t)\right|^2\right\}\right]\,dt.
    \end{split}
    \end{align}
    We study a fixed $\hat b^i_j$ term, the others being the same, for a fixed time $t$.
    To control the exponential integral, we can perform a Taylor expansion and study the moments of $|\hat b^i_j(X_t^{1,j},L_n(\boldsymbol X))-\hat b^i_j(X_t^{1,j},\rho_t)|^{2p}$ for every $p\geq 1$.
    By \eqref{eq: bound on difference of regularized estimator}, we only need to check concentration of 
    \[\frac 1n\sum_{s}b^i_j(X^{1,j}_t,X^{s,-j}_t)K_h(X_t^{1,j}-X_t^{s,j}) \text{ and } \frac 1n\sum_{s}K_h(X^{1,j}_t-X^{s,j}_t)\] 
    around their large $n$ limits; their analysis will be similar. 
    Use a uniform bound and check
    \begin{align}
    \label{eq: concentration 1}
    \begin{split}
    \frac{1}{n^{2p}}&\Big|b^i_j(X^{1,j}_t,X^{s,-j}_t)K_h(0)-\int_{\R^{dm}} b^i_j(X^{s,j}_t,y^{-j})K_h(X^{1,j}_t-y^j)\,d\rho_t(y)\Big|^{2p}\leq \frac{\|b^i_j K\|_\infty^{2p}}{n^{2p}}\left(1+\frac{1}{h^{dp}}\right)\\
    &\leq \frac{p!}{n^p}\cdot \Big(\frac{\|b^i_j K\|_\infty^{2}}{h^{d}}\Big)^p.
    \end{split}
    \end{align}
    On the other hand, since $X^1$ is independent from $(X^s)_{s=2}^n$ under $\rho^{\otimes n}$, we can condition and use the freezing lemma to see
      \begin{align}
    \label{eq: concentration 2}
    \begin{split}
        &\E_{\rho^{\otimes n}}\bigg[\bigg|\frac 1n \sum_{s=2}^n \bigg(b^i_j(X_t^{1,j},X^{s,-j}_t) K_h(X^{1,j}_t-X^{s,j}_t)-\int_{\R^{dm}} b^i_j(X^{1,j}_t,y^{-j})K_h(X^{1,j}_t-y^j) \,d\rho_t(y)\,\bigg)\bigg|^{2p}\bigg]\\
        &=\int_{\R^d} \E_{\rho^{\otimes n}}\bigg[\bigg|\frac 1n \sum_{s=2}^n \bigg(b^i_j(z,X_t^{s,-j}) K_h(z-X^{s,j}_t)-\int_{\R^{dm}} b_i^j(z,y^{-j})K_h(z-y^j)\,d\rho_t(y)\bigg)\bigg|^{2p}\bigg \vert X^{1,j}_t=z\bigg]\,d\rho^j_{t}(z)\\
        &=\int_{\R^d} \E_{\rho^{\otimes n}}\bigg[\bigg|\frac 1n \sum_{s=2}^n\bigg( b^i_j (z,X_t^{s,-j}) K_h(z-X^{s,j}_t)-\int_{\R^{dm}} b^i_j(z,y^{-j})K_h(z-y^j) \,d\rho_t(y)\bigg)\bigg|^{2p}\bigg]\,d\rho_t^j(z)\\
        &\leq \int_{\R^d} \frac{p!(n-1)^p}{n^{2p}}\|b^i_j(z,\cdot)K_h(z-\cdot) \|^{2p}_\infty\,d\rho_t^j(z) \leq \frac{p!}{n^p}\cdot \left(\frac{\|b^i_jK\|_\infty^{2}}{h^{d}}\right)^p.
        \end{split}
    \end{align}
    The first inequality follows since a bounded random variable $Z$ is subgaussian with parameter $\|Z\|^2_\infty$ and the mean of $n$, $\sigma^2$-subgaussian independent random variables is subgaussian with parameter $\sigma^2 \slash n$. Plugging \eqref{eq: concentration 1} and \eqref{eq: concentration 2} into \eqref{eq: bound on difference of regularized estimator} (with the same reasoning for the $\sum_k K_h(X_t^{1,j}-X_t^{k,j})$ term), we conclude that for some $C$ that depends on $b$ and $K$,
    \[\E_{\rho_t^{\otimes n}}\left[|\hat b^i_j(X_t^{1,j},L(\boldsymbol X))-\hat b^i_j(X_t^{1,j},\rho_t)|^{2p}\right]^{\frac{1}{2p}}\leq   \frac{C}{\varepsilon \sqrt{n h^{d}}} (p!)^{1/2p}.\]
    Thus if we take $c=2  C^2 / \varepsilon^2 h^{d}$ we get
    \begin{align*}
        c \log \E_{\rho_t^{\otimes n}}&\left [\exp\left\{\frac nc\left|\hat b^i_j(X_t^{1,j},L(\boldsymbol{X}_t)-\hat b^i_j(X_t^{1,j},\rho_t)\right|^2\right\}\right]\\
        &=c \log\left( \sum_{p=0}^\infty \frac{n^p}{c^p p!}\E\left[\left|\hat b^i_j(X_t^{1,j},L(\boldsymbol{X}_t)-\hat b^i_j(X_t^{1,j},\rho_t)\right|^{2p}\right]\right)\\
        &\leq 2  C^2 / \varepsilon^2 h^{d}\log 2.
    \end{align*}
    Plugging this bound into equations \eqref{eq: particles entropy} and using Gronwall's inequality yields for a constant $C$ independent of $n,h,\varepsilon$
    \begin{align*}
        \ent(\pi[T]\,\|\,\rho^{\otimes n}[T])\leq \frac{C}{\varepsilon^2 h^d}e^{\frac{C}{\varepsilon^2 h^{d}}}.
    \end{align*}
    Since $\rho_t^{\otimes n}$ is a product measure and $\pi$ is exchangeable, we can use the well known subadditivity of relative entropy (see for instance Lemma A.4 in \cite{Holzinger}) to see
    \begin{align*}
        \ent(\pi^k[T]\,\|\,\rho^{\otimes k}[T])&\leq \frac kn \ent(\pi[T]\,\|\,\rho^{\otimes n}[T])\\
        &\leq \frac kn \frac{C}{\varepsilon^2 h^d}e^{\frac{C}{\varepsilon^2 h^{d}}}
    \end{align*}
    which completes the proof.
    \end{proof}
We remark that this part of the analysis is likely suboptimal, despite the fact that a logarithmic rate is typical in the moderate interactions literature. To the best of our knowledge, the only papers with moderate interactions (where the kernel $K$ approaches a Dirac's delta) that can prove a polynomial rate are \cite{Holzinger} and \cite{TomasevicBurgers}, for the better understood cases of the viscous porous medium and viscous Burger's equation. 
\section{Regularized Process to McKean--Vlasov Equation}
\label{section: intermediate to McKean--Vlasov}
In this section we study the rate of convergence in relative entropy of the regularized SDE \eqref{eq: intermediate problem} to the McKean--Vlasov limit \eqref{eq: main SDE}. Well posedness of \eqref{eq: main SDE} easily follows by other results in the literature, but to be self contained we provide a proof in Appendix \ref{section: well posedness}.
\begin{lemma}
\label{lm: well posedness MKV}
    Under the assumptions of Theorem \ref{th: main}, Equation \eqref{eq: main SDE} has a unique in law weak solution with law $\mu$.
\end{lemma}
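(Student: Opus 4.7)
The plan is to establish existence by a fixed point construction and uniqueness by a relative entropy Gronwall argument, both leveraging the fact that $b$ is bounded so the drift of \eqref{eq: main SDE} is automatically uniformly bounded regardless of the law of the solution.

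For existence, I would argue as follows. Given a continuous flow $(\nu_t)_{t\in[0,T]}$ of probability measures on $\R^{dm}$, fix a regular version of the conditional expectations $\hat b^i_j(x^j,\nu_t) = \E_\nu[b^i_j(Y_t)\mid Y^j_t=x^j]$; the linearized SDE
\[
dX^i_t = \sum_{j=1}^m \hat b^i_j(X^j_t,\nu_t)\,dt + V^i(X_t)\,dt + \sqrt 2 \sigma\, dW^i_t
\]
has a unique weak solution by Girsanov, because its drift is bounded. Denote by $\Phi(\nu)$ the resulting marginal flow. A weak solution of \eqref{eq: main SDE} is precisely a fixed point of $\Phi$. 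I would restrict $\Phi$ to a weakly compact convex set of flows whose marginals admit a density uniformly bounded above and locally bounded below, a set that is stable under $\Phi$ by parabolic estimates (Aronson-type Gaussian bounds) for Fokker--Planck with bounded drift and smooth initial density $\mu_0$. On such a set $\Phi$ is continuous in the weak topology because the denominator of $\hat b^i_j$ stays bounded away from zero locally, so Schauder's theorem yields a fixed point.

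For uniqueness, suppose $\mu,\mu'$ are two weak solutions on $[0,T]$. Boundedness of both drifts and $\mu_0^{\otimes}$ being common make $\ent(\mu[t]\,\|\,\mu'[t])$ finite, and the standard Girsanov-based entropy identity gives
\[
\ent(\mu[t]\,\|\,\mu'[t]) \le \frac{C}{\sigma^2}\sum_{i,j}\int_0^t \E_\mu\Bigl[\bigl|\E_\mu[b^i_j(X_s)\mid X^j_s] - \E_{\mu'}[b^i_j(X_s)\mid X^j_s]\bigr|^2\Bigr]ds.
\]
The task is then to bound the integrand by a constant times $\ent(\mu_s\,\|\,\mu'_s)$, so that Gronwall closes the argument. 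To do so I would rewrite the difference of conditional expectations as an integral against $\mu_s-\mu'_s$ divided by the $j$-marginal, and apply a Donsker--Varadhan-type comparison much in the spirit of the exponential concentration used in the proof of Proposition \ref{pr: particles to intermediate}, together with the Gaussian lower bound on the marginal from the previous step.

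The main obstacle is this final comparison: small values of the marginal $\mu^j_s(x^j)$ amplify the difference of conditional expectations, so a pointwise bound by relative entropy is not obvious. Boundedness of $b$ helps, because the numerator of the Nadaraya--Watson--style ratio vanishes proportionally wherever the denominator does; combined with the locally uniform lower bound on the densities inherited from the fixed point construction, this should give the required quadratic control. If a direct bound proves recalcitrant, the fallback is to approximate $\mu$ and $\mu'$ by the regularized intermediate problem \eqref{eq: intermediate problem} (for which Lemma \ref{lm: well posedness} already gives uniqueness), establish a uniform stability estimate in $\varepsilon$ and $h$, and pass to the limit.
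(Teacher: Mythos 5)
Your overall plan (Schauder fixed point for existence, entropy--Gronwall for uniqueness) is the paper's plan, but the uniqueness step has a real gap that the paper resolves with a trick you did not find, and the continuity step in your existence argument is left vague at exactly the point where the work lies.

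For uniqueness, you correctly write the entropy identity and correctly identify the obstacle: unravelling the conditional expectation into the ratio $\mu_s(x^j,y^{-j})/\mu^j_s(x^j)$ puts the marginal density in a denominator, and controlling this by $\ent(\mu_s\,\|\,\mu'_s)$ then seems to require lower bounds on $\mu^j_s$. The paper never opens the ratio. Instead it uses boundedness of $b^i_j$ to get
\[
\Bigl|\E_\mu[b^i_j(X_s)\mid X^j_s=x^j]-\E_{\mu'}[b^i_j(X_s)\mid X^j_s=x^j]\Bigr|\le \|b^i_j\|_\infty\,\|\mu_s^{x^j}-(\mu'_s)^{x^j}\|_{\text{TV}},
\]
then Pinsker's inequality, and then -- this is the key point you missed -- the chain rule for relative entropy, which gives
\[
\int_{\R^d}\ent\!\bigl(\mu_s^{x^j}\,\|\,(\mu'_s)^{x^j}\bigr)\,d\mu_s^j(x^j)=\ent(\mu_s\,\|\,\mu'_s)-\ent(\mu_s^j\,\|\,(\mu'_s)^j)\le \ent(\mu_s\,\|\,\mu'_s),
\]
and Gronwall closes immediately. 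No Gaussian lower bound, no Donsker--Varadhan comparison, no appeal to the regularized problem is needed; the decomposition into conditional and marginal entropies absorbs the denominator. (This is the same sequence of moves used for Term$_1$ in Proposition~\ref{pr: regularized process to MKV}.) Your fallback via the regularized intermediate problem would require a stability estimate uniform in $\varepsilon,h$ that is not obviously easier than what you are trying to prove.

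For existence, the assertion that ``$\Phi$ is continuous because the denominator of $\hat b^i_j$ stays bounded away from zero locally'' does not follow from weak convergence alone: weak convergence of $\nu^n_t$ to $\nu_t$ does not imply convergence of the conditional expectation functions, even if all densities are bounded above and locally below. The paper makes this precise by showing that on the relevant compact set the densities are uniformly H\"older (via \cite[Corollary 6.4.3]{bogachev2022fokker}) and uniformly bounded, so Ascoli--Arzel\`a upgrades weak convergence to local uniform (hence total variation) convergence of the time marginals, and then a continuity theorem for conditional expectations under total-variation convergence gives convergence of the drifts; the entropy identity then shows $\Phi(Q^n)\to\Phi(Q)$. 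You would need to supply these two steps (local uniform convergence of densities, and continuity of the conditional expectation map in total variation) to make your continuity claim rigorous.
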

We begin with a technical remark about the representation of the conditional expectation functions.
\begin{remark}
    Since $\rho$ and $\mu$ are the law of SDEs with bounded drift, it follows by \cite[Corollary 6.4.3]{bogachev2022fokker} that they admit locally H\"older densities. By classical results we conclude that for every $\varphi \in L^\infty(\R^{dm})$,
    \[\E_\mu[\varphi(X_t)\mid X_t^j=x^j]=\frac{\int \varphi(x^j,x^{-j}) \mu_t(x^j,x^{-j})\,dx^{-j}}{\int \mu_t(x^j,x^{-j})\,dx^{-j}};\]
    the same applies to $\rho$.
\end{remark}
    
In what follows, the relative entropy $\ent(\mu_t\,\|\, \mu_t \star_j K_h)$ between the law of the McKean--Vlasov limit \eqref{eq: main SDE} at time $t$ and itself mollified in the $j$-th coordinate (for an arbitrary $j$) will be a key object. This quantity is crucial because it measures how well the mollified conditional expectation that appears in the drift of \eqref{eq: intermediate problem} approximates the actual conditional expectation. Its behavior depends on the regularity properties of $\mu$ that will be the focus of Section \ref{sect: regularity estimates}. We control the relative entropy with the following Lemma.
    \begin{lemma}
    \label{lm: bound on time integral of relative entropy}
        There exists a constant $C<\infty$ such that 
        \[\int_0^T \ent(\mu_t\,\|\,\mu_t \star_j K_h)\,dt\leq C h^2.\]
        The constant $C$ depends on $T,b,V,\mu_0,d,\sigma,m$ and $K$.
    \end{lemma}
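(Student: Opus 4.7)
My plan hinges on the observation that, thanks to the symmetry $K_h(z)=K_h(-z)$ and the vanishing first moment in Assumption \ref{assumption: Kernel}, the pointwise difference $\mu_t-\mu_t\star_j K_h$ is of order $h$ rather than the naive $\sqrt h$: the linear term in the Taylor expansion of $\mu_t(x-ze_j)-\mu_t(x)$ integrates to zero against $K_h$, so only second derivatives of $\mu_t$ survive. Combined with the standard reduction $\ent(\alpha\|\beta)\le \chi^2(\alpha\|\beta)=\int(\alpha-\beta)^2/\beta\,dx$ (which follows from $\log u\le u-1$), this will produce the desired factor $h^2$, at the cost of introducing a Fisher-type quantity that the regularity analysis of Section \ref{sect: regularity estimates} is designed to control.

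Concretely, I would first use symmetry to write
\[
\mu_t(x)-\mu_t\star_j K_h(x) \;=\; -\tfrac{1}{2}\int K_h(z)\bigl[\mu_t(x+ze_j)+\mu_t(x-ze_j)-2\mu_t(x)\bigr]\,dz,
\]
and bound the bracket by Taylor's theorem by $|z|^2\int_0^1(1-s)\bigl(|\nabla^2_j\mu_t(x+sze_j)|+|\nabla^2_j\mu_t(x-sze_j)|\bigr)\,ds$. Squaring with Cauchy--Schwarz on the measure $K_h(z)|z|^2(1-s)\,dz\,ds$ and using $\int K_h(z)|z|^2\,dz\le Ch$ then yields the pointwise bound
\[
|\mu_t-\mu_t\star_j K_h|^2(x)\;\le\; Ch\int K_h(z)|z|^2\int_0^1\bigl(|\nabla^2_j\mu_t(x+sze_j)|^2+|\nabla^2_j\mu_t(x-sze_j)|^2\bigr)\,ds\,dz.
\]
Dividing by $\mu_t\star_j K_h(x)$ and integrating in $x$, I would handle the denominator via Jensen's inequality applied to the convex map $1/y$, which gives $1/(\mu_t\star_j K_h)(x)\le \int K_h(w)/\mu_t(x-we_j)\,dw$. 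After Fubini and a change of variables, the argument of $\mu_t$ in the denominator shifts by $O(\sqrt h)$; a translation-comparability estimate on $\mu_t$, in the spirit of the reverse transport inequalities used to check Assumption \ref{assumption: initial regularity}(1) in the examples, lets me swap this shifted $\mu_t$ for $\mu_t(y)$ up to a bounded constant. Extracting the second factor of $h$ from $\int K_h(z)|z|^2\,dz\le Ch$ yields pointwise in $t$
\[
\ent(\mu_t\,\|\,\mu_t\star_j K_h)\;\le\; Ch^2\int \frac{|\nabla^2_j\mu_t(y)|^2}{\mu_t(y)}\,dy,
\]
and integrating in $t$ and invoking the Fisher-type estimate $\int_0^T\int|\nabla^2_j\mu_t|^2/\mu_t\,dy\,dt<\infty$ from Section \ref{sect: regularity estimates} finishes the proof.

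The main obstacle lies outside the present lemma: the bound $\int_0^T\int|\nabla^2_j\mu_t|^2/\mu_t\,dy\,dt<\infty$ is the deepest technical input of the paper and is supplied by the bootstrap and energy-estimate analysis of Section \ref{sect: regularity estimates}, which must propagate in time the initial regularity built into Assumption \ref{assumption: initial regularity}(3). A secondary subtlety, internal to the present argument, is the translation-comparability used to swap the shifted $\mu_t$ for $\mu_t$ without losing the factor $h^2$: this has to be done so that the resulting constant is independent of $h$, which relies on the smoothness and tail properties of $\mu_t$ inherited from Assumption \ref{assumption: initial regularity} together with the sub-Gaussian tail of $K$ from Assumption \ref{assumption: Kernel}.
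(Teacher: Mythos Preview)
Your outline tracks the paper's argument closely: both reduce $\ent$ to $\chi^2$, both extract the factor $h^2$ by expanding $\mu_t-\mu_t\star_j K_h$ to second order using the zero-mean property of $K$, and both then face a weighted Hessian integral in which the denominator $\mu_t\star_j K_h$ sits at the \emph{unshifted} point while the Hessian is evaluated at a shifted one. The difference---and the gap---lies in how that mismatch is handled.

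What you label a ``secondary subtlety'' is in fact the crux. The translation-comparability bound $\mu_t(y)/\mu_t(y-u)\le C$ for $|u|=O(\sqrt h)$ is \emph{not} available from Section~\ref{sect: regularity estimates}: that analysis only yields integrated estimates such as $\int_0^T\int|\nabla\log\mu_t|^p\,d\mu_t\,dt<\infty$ and the $D_4$-divergence control of Proposition~\ref{pr: boundedness of Renyi Divergence}; it never establishes $\nabla\log\mu_t\in L^\infty$. The reverse transport inequalities in the examples after Assumption~\ref{assumption: initial regularity} concern $\mu_0$ only and are not propagated to $\mu_t$. Without a pointwise ratio bound, the only way to control $\int\frac{\mu_t(y)}{\mu_t(y-u)}\cdot\bigl|\frac{\nabla^2\mu_t(y)}{\mu_t(y)}\bigr|^2\,d\mu_t(y)$ is by Young or H\"older, and any such splitting forces moments of $\nabla^2\mu_t/\mu_t$ strictly above the second on one side and a $D_p$-divergence on the other. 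This is exactly the paper's route: after multiplying and dividing by $\mu_t^r$ at the shifted point, it applies Young's inequality with exponents $(3,3/2)$ and arrives at $\int\bigl|\nabla^2\mu_t^r/\mu_t^r\bigr|^3\,d\mu_t^r$ (controlled by Proposition~\ref{pr: second derivatives third moments}) plus a $D_4$ term (controlled by Proposition~\ref{pr: boundedness of Renyi Divergence}). Thus your scheme, once the translation step is made rigorous with the tools actually on hand, collapses into the paper's proof---but needing the third-moment bound rather than the second-moment bound of Proposition~\ref{pr: second derivatives square moments}, which is precisely why the paper goes to the trouble of proving the former. A minor additional point: because $\mu_t$ is only weakly differentiable, the paper carries out the entire computation on the mollified density $\mu_t^r$ and passes to the limit $r\to 0$ via Fatou's lemma and the continuity of $\ent(\mu_t^r\,\|\,\tilde\mu_t^r)$ under mollification.
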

    We postpone the proof of Lemma \ref{lm: bound on time integral of relative entropy} to the end of the Section \ref{sect: regularity estimates}.
    For a probability density $\alpha \in \mathcal{P}(\R^{dm})$ we will denote by $\alpha^j\in \mathcal{P}(\R^d)$ its $j$-th marginal and by $\alpha^{x^j}$ its conditional density:
\begin{align*}
    \alpha^j(x^j)=\int\alpha^i_j(x^j,y^{-j})\,dy^{-j}\,\,;\,\,
    \alpha^{x^j}(y^{-j})=\frac{\alpha(x^j,y^{-j})}{\alpha^j(x^j)}.
\end{align*}

We recall some information-theoretic inequalities that will be the backbone of our approach.
The first is the chain rule for relative entropy, which states that for probability measures $\alpha,\beta \in \mathcal{P}(\R^{dm})$,
\[\ent(\alpha\,\|\,\beta) = \ent(\alpha^j\,\|\,\beta^j)+\int \ent(\alpha^{x^j}\,\|\,\beta^{x^j})\,d\alpha^j(x^j).\]

Another key property is the data processing inequality: given probability measures $\alpha,\beta,\gamma$ we have
\[\ent(\alpha \star \gamma \,\|\,\beta \star \gamma)\leq \ent(\alpha\,\|\,\beta).\] 
We remark that the data processing inequality holds for $\D_p$ and $\chi^2$ as well. Lastly we recall Pinsker's inequality; for two probability measures $\alpha,\beta$ on some Polish space it holds 
\[\|\alpha-\beta\|^2_{\text{TV}}\leq 2\ent(\alpha\,\|\,\beta).\]

\begin{proposition}
\label{pr: regularized process to MKV}
Under the assumptions of Theorem \ref{th: main}, for each $p<1$ we can find a constant $C$ depending on $p,T,\sigma,\mu_0,b,V,K,m$ such that
     \begin{equation*}
     \label{eq: regularized process to MKV}
         \ent(\mu[T]\,\|\,\rho[T])\leq C(h^2+\varepsilon^{p})
     \end{equation*}
\end{proposition}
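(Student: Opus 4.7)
The plan is to apply the standard Girsanov entropy estimate
\[
\ent(\mu[T]\|\rho[T]) = \frac{1}{4\sigma^2}\int_0^T\sum_i \E_\mu\Big[\Big|\sum_j\big(\E_\mu[b^i_j(X_t)\mid X_t^j] - \hat b^i_j(X_t^j,\rho_t)\big)\Big|^2\Big]dt,
\]
valid because both SDEs share the $V^i$ drift and the common diffusion $\sqrt 2\sigma$, and then to decompose each inner drift difference via the triangle inequality into three natural pieces: the mollification bias $(A) = \E_\mu[b^i_j(X_t)\mid X_t^j] - \tilde f(X_t^j,\mu_t)$, the regularization defect $(B) = \tilde f(X_t^j,\mu_t) - \hat b^i_j(X_t^j,\mu_t)$, and the mean--field error $(C) = \hat b^i_j(X_t^j,\mu_t) - \hat b^i_j(X_t^j,\rho_t)$. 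Here $\tilde f(x^j,\nu) := \E_{\nu\star_j K_h}[b^i_j(X)\mid X^j = x^j]$ denotes the un-regularized Nadaraya--Watson approximation, well-defined wherever the mollified marginal is strictly positive.

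Term $(A)$ compares two conditional expectations of the bounded function $b^i_j$, one under $\mu_t^{X_t^j}$ and the other under $(\mu_t\star_j K_h)^{X_t^j}$. Pinsker's inequality and then the chain rule for relative entropy give $\E_\mu[|A|^2]\leq 2\|b^i_j\|_\infty^2\,\ent(\mu_t\|\mu_t\star_j K_h)$, which integrated over $[0,T]$ yields $O(h^2)$ by Lemma \ref{lm: bound on time integral of relative entropy}. Term $(B)$ vanishes on $\{(\mu_t^j\star K_h)(X_t^j)\geq\varepsilon\}$ and is elsewhere pointwise bounded by $2\|b^i_j\|_\infty$, so it suffices to show $\P_\mu((\mu_t^j\star K_h)(X_t^j)<\varepsilon)\leq C\varepsilon^p$ for every $p<1$. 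I would split the event along a ball $B_R$: inside, use $\mu_t^j\leq(\mu_t^j\star K_h)+|\mu_t^j-\mu_t^j\star K_h|$ to bound the inside by $\varepsilon|B_R|+\|\mu_t^j-\mu_t^j\star K_h\|_{L^1(B_R)}$, the mollification defect being controlled uniformly by the regularity of $\mu_t^j$ from Section \ref{sect: regularity estimates}; outside, Markov's inequality and Assumption \ref{assumption: initial regularity}(2) give $C_q R^{-q}$ for any $q$. Optimizing $R=\varepsilon^{-1/(d+q)}$ and sending $q\to\infty$ delivers $\varepsilon^p$ for every $p<1$.

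The delicate term is $(C)$. The naive Lipschitz-in-$\textup{TV}$ bound from \eqref{eq: bound on difference of regularized estimator} gives $|C|^2\leq 2C\varepsilon^{-2}\,\ent(\mu_t\|\rho_t)$ via Pinsker, which through Gronwall would produce the forbidden prefactor $e^{CT/\varepsilon^2}$ and destroy the $\varepsilon$-independence of the constant demanded by the statement. Instead, I would further split $(C)=-(B)+(\tilde f(X_t^j,\mu_t)-\tilde f(X_t^j,\rho_t))+(\tilde f(X_t^j,\rho_t)-\hat b^i_j(X_t^j,\rho_t))$. The first summand is term $(B)$ (up to sign); the third is the analogous regularization defect for $\rho_t$, handled by the same ball-splitting argument combined with $\|\mu_t-\rho_t\|_{\textup{TV}}^2\leq 2\,\ent(\mu_t\|\rho_t)$; the crucial middle summand is again a difference of conditional expectations of $b^i_j$, now under $(\mu_t\star_j K_h)^{X_t^j}$ and $(\rho_t\star_j K_h)^{X_t^j}$. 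Applying Pinsker pointwise, the chain rule, the data processing inequality, and converting integration from $\mu_t^j$ to $\mu_t^j\star K_h$ via an $h$-uniform bound on $d\mu_t^j/d(\mu_t^j\star K_h)$ (once more drawn from the regularity of Section \ref{sect: regularity estimates}), this middle summand contributes at most $C\,\ent(\mu_t\|\rho_t)$ with $C$ independent of $h$ and $\varepsilon$.

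Combining the three estimates yields an inequality of the form
\[
\ent(\mu[T]\|\rho[T]) \leq C(h^2+\varepsilon^p) + C\int_0^T\ent(\mu_t\|\rho_t)\,dt,
\]
and Gronwall's lemma closes the proof. The hard step is precisely obtaining the $\varepsilon$-independent coefficient in front of $\ent(\mu_t\|\rho_t)$ for $(C)$: the key ingredients are the conditional-expectation representation of $\tilde f$, which cancels the singular $1/\varepsilon$ factor that would otherwise come from the denominator, and the regularity estimates of Section \ref{sect: regularity estimates}, which make the $\mu_t^j$-vs-$\mu_t^j\star K_h$ comparison harmless.
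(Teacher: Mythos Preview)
Your overall architecture is right and matches the paper: Girsanov entropy estimate, recognize that the naive $\varepsilon^{-2}$ Lipschitz bound on $\hat b$ must be avoided, and instead exploit that both $\tilde f(\cdot,\mu_t)$ and $\tilde f(\cdot,\rho_t)$ are conditional expectations so that Pinsker $+$ chain rule $+$ data processing produce $\ent(\mu_t\,\|\,\rho_t)$ with an $\varepsilon$-free constant. Your decomposition, after unwinding the detour through $(B)$ and $(C)=-(B)+\text{middle}+\text{third}$, collapses to exactly the paper's three terms.

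The gap is in the execution of the measure changes. In the ``middle'' term you need $\E_{\mu_t^j}[\|\tilde\mu_t^{x^j}-\tilde\rho_t^{x^j}\|_{\text{TV}}^2]$, but Pinsker $+$ chain rule only control the same expectation under $\tilde\mu_t^j$. You propose an $h$-uniform $L^\infty$ bound on $d\mu_t^j/d\tilde\mu_t^j$; Section~\ref{sect: regularity estimates} does not provide this (all score bounds there are in $L^p(\mu_t)$, not $L^\infty$), and a H\"older argument with the available $D_4$ control yields only $\ent(\mu_t\,\|\,\rho_t)^{2/3}$, which does not close under Gronwall. Similarly, for the ``third'' term your plan gives $\P_\mu(\tilde\rho_t^j<\varepsilon)\leq \P_\rho(\tilde\rho_t^j<\varepsilon)+\|\mu_t-\rho_t\|_{\text{TV}}$; the last piece is $\sqrt{2\ent(\mu_t\,\|\,\rho_t)}$, and after Young's inequality this deposits an $O(1)$ constant on the right-hand side that is not small in $(h,\varepsilon)$, so Gronwall yields nothing. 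Your ball-splitting for $(B)$ also produces $O(h)$ rather than $O(h^2)$, since the $L^1$ mollification defect of $\mu_t^j$ is first-order in $h$ even with a mean-zero kernel.

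The fix, and the paper's key device, is the Donsker--Varadhan variational formula combined with the boundedness of $\|\cdot\|_{\text{TV}}^2\in[0,4]$ (so $e^s\leq 1+cs$). For the middle term this converts $\E_{\mu_t^j}[\|\cdot\|_{\text{TV}}^2]$ into $\ent(\mu_t^j\,\|\,\tilde\mu_t^j)+\log(1+c\,\E_{\tilde\mu_t^j}[\|\cdot\|_{\text{TV}}^2])\leq \ent(\mu_t\,\|\,\tilde\mu_t)+2c\,\ent(\mu_t\,\|\,\rho_t)$, the first summand being $O(h^2)$ by Lemma~\ref{lm: bound on time integral of relative entropy}. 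For the third term the same trick, applied twice, gives $\mu_t^j(\tilde\rho_t^j<\varepsilon)\leq \ent(\mu_t\,\|\,\tilde\mu_t)+e\,\ent(\mu_t\,\|\,\rho_t)+e^2\,\tilde\rho_t^j(\tilde\rho_t^j<\varepsilon)$, and the last probability is bounded by $C\varepsilon^p$ via Markov and the moment assumption~\ref{assumption: initial regularity}(2) alone---no regularity of $\rho_t$ is needed. This produces linear-in-$\ent(\mu_t\,\|\,\rho_t)$ terms with $\varepsilon$-independent constants, and Gronwall closes.
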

\begin{proof}
     We analyze entropy on path space.
    An entropy calculation (see for example \cite[Section 3]{StrongPropagationOfChaos}) and the triangle inequality bring us to the following expression for entropy:
    \begin{align*}
    \begin{split}
    &\ent(\mu[T] \,\|\,\rho[T])\\
    &=\frac{1}{4\sigma^2}\sum_{i=1}^m\int_0^T \int_{\R^{m d}} \bigg|\sum_{j=1}^m\int_{\R^{d (m-1)}} b^{i}_j (x^j,y^{-j})\left(\frac{\rho_t\star_j K_h(x^{j},y^{-j})}{ \rho^{j}_t\star K_h (x^j) \vee \varepsilon} -\frac{\mu_t(x^j,y^{-j})}{\mu_t^j(x^{j})}\right)\,dy^{-j}\bigg \vert^2\,d\mu_t(x)\,dt\\
    &\leq \frac{m}{4\sigma^2}\sum_{i=1}^m \sum_{j=1}^m\int_0^T \int_{\R^{d}} \bigg|\int_{\R^{d (m-1)}} b^{i}_j (x^j,y^{-j})\left(\frac{\rho_t\star_j K_h(x^{j},y^{-j})}{ \rho^{j}_t\star K_h (x^j) \vee \varepsilon} -\frac{\mu_t(x^j,y^{-j})}{\mu_t^j(x^{j})}\right)\,dy^{-j}\bigg \vert^2\,d\mu^j_t(x^j)\,dt.
    \end{split}
    \end{align*}
    Fix $j \in [m]$; we study a generic $b_j^i$ term, and the analysis of the other ones will follow naturally. From now on we will use the shorthand notation:
    \begin{align*}
        \tilde \rho_t= \rho_t \star_j K_h\,\,;\,\,\tilde\rho_t^j= \rho_t^j \star K_h\,\,;\,\,\tilde \rho_t^{x^j}(x^{-j})=\frac{\tilde\rho_t(x^j,x^{-j})}{\tilde\rho_t^j(x^j)}.
    \end{align*}
    We define $\tilde \mu_t, \tilde \mu^j_t$ and $\tilde \mu_t^{x^j}$ analogously. Use the triangle inequality to see
    \begin{align}
    \begin{split}
    \label{eq: bound on a}
        \int_0^T &\int_{\R^d} \left|\int_{\R^{d(m-1)}}  b_j^i(x^j,y^{-j})\left(\frac{\tilde\rho_t(x^j,y^{-j})}{\tilde \rho^j_t (x^j) \vee \varepsilon} -\mu_t^{x^j}(y^{-j})\right)\,dy^{-j}\right|^2\,d\mu_t^j(x^j)\,dt\\
        &\leq 3\int_0^T\int_{\R^d} \left\vert \int_{\R^{d(m-1)}}  b_j^i(x^j,y^{-j})( \mu_t^{x^j}-\tilde \mu_t^{x^j})(y^{-j})\,dy^{-j}\right\vert^2\,d\mu_t^j(x^j)\,dt\\
        &+3\int_0^T \int_{\R^d}\left\vert \int_{\R^{d(m-1)}} b_j^i(x^j,y^{-j})\left( \tilde \mu_t^{x^j}-\tilde\rho_t^{x^j}\right)(y^{-j})\,dy^{-j} \right\vert^2\,d\mu_t^j(x^j)\,dt\\
        &+  3\int_0^T \int_{\R^d}\left\vert \int_{\R^{d(m-1)}} b_j^i(x^j,y^{-j})\left( \tilde \rho_t^{x^j}(y^{-j})-\frac{\tilde\rho_t(x^j,y^{-j})}{\tilde \rho^j_t (x^j) \vee \varepsilon}\right)\,dy^{-j} \right\vert^2\,d\mu_t^j(x^j)\,dt\\
        &=3(\text{Term}_1+\text{Term}_2+\text{Term}_3).
    \end{split}
    \end{align}
    We break up the proof by analyzing the terms one by to get to the desired bound.
    We upper bound Term$_1$ by using Total Variation and Pinsker's inequality:
\begin{align}
\label{eq: term 1}
\begin{split}
    \text{Term}_1&=\int_0^T\int_{\R^d} \left\vert \int_{\R^{d(m-1)}} b_j^i(x^j,y^{-j}) \left(\mu_t^{x^j}-\tilde\mu^{x^j}_t\right)(y^{-j})\,dy^{-j}\right\vert^2 \,d \mu_t^j(x^{j})\,dt\\
    &\leq \|b_j^i\|_\infty^2 \int_0^T\int_{\R^d} \|\mu_t^{x^j}-\tilde\mu_t^{x^j}\|_{\text{TV}}^2\,d\mu_t^j(x^j)\,dt\\
    &\leq 2\|b_j^i\|_\infty^2\int_0^T \int_{\R^d} \ent(\mu_t^{x^j}\,\|\,\tilde\mu_t^{x^j})\,d\mu_t^j(x^{j})\,dt\\
    &\leq 2\|b_j^i\|_\infty^2\int_0^T\ent(\mu_t\,\|\,\tilde \mu_t)\,dt.
\end{split}
    \end{align}
Notice that we use the chain rule for relative entropy to go from $\int \ent(\mu_t^{x^j}\,\|\,\tilde\mu_t^{x^j})\,d\mu^j_t$ to $\ent(\mu_t\,\|\,\tilde\mu_t)$.
For Term$_2$, we use again a Total Variation bound to see
\begin{align*}
\text{Term}_2&=\int_0^T\int_{\R^d}\left\vert\int_{\R^{d(m-1)}} b_j^i(x^j,y^{-j})\left( \tilde \mu_t^{x^j}-\tilde\rho_t^{x^j}\right)(y^{-j})\,dy^{-j} \right\vert^2\,d\mu_t^j(x^j)\,dt\\
        &\leq \int_0^T\|b_j^i\|_\infty^2 \int_{\R^d} \|\tilde\mu_t^{x^j}-\tilde\rho_t^{x^j}\|_{\text{TV}}^2\,d\mu_t^j(x^j)\,dt\\
        &\leq\int_0^T \|b_j^i\|_\infty^2\left(\ent(\mu_t^j\,\|\,\tilde\mu_t^j)+\log \left(\int_{\R^d} \exp\left\{\|\tilde\rho_t^{x^j}-\tilde\mu_t^{x^j}\|_{\text{TV}}^2\right\}\,\,d\tilde \mu_t^j(x^j)\right)\right)\,dt
    \end{align*}
    where the second inequality is the variational representation of entropy. Since the total variation distance is bounded between $0$ and $2$, $0\leq \|\tilde\rho_t^{x^j}-\tilde\mu_t^{x^j}\|_{\text{TV}}^2\leq 4$; but for $s \in [0,4]$, $e^s\leq 1+s\cdot \frac{e^4-1}{4}$. As a result we obtain, for $c=(e^4-1)/4$
    \begin{align}
\label{eq: term 2}
\begin{split}
        \text{Term}_2&\leq \|b_j^i\|_\infty^2\int_0^T \left(\ent(\mu_t^j\,\|\,\tilde\mu_t^j)+\log \left(1+\frac{e^4-1}{4}\int_{\R^d}\|\tilde\rho_t^{x^j}-\tilde\mu_t^{x^j}\|^2_{\text{TV}}\,\tilde\mu_t^j({x^j})\,dx\right)\right)\,dt\\
        &\leq \|b_j^i\|_\infty^2\int_0^T \ent(\mu_t^j\,\|\,\tilde\mu_t^j)+ 2c\ent(\tilde \mu_t\,\|\,\tilde \rho_t)\,dt\leq \|b_j^i\|_\infty^2\int_0^T \ent(\mu_t\,\|\,\tilde\mu_t)+ 2c\ent(\mu_t\,\|\, \rho_t)\,dt.
\end{split}
    \end{align}
    The second inequality follows from $\log (1+x)\leq x$ and Pinsker's inequality and the chain rule for relative entropy. For the third, we use the Data Processing inequality on both entropic terms.\bigskip\\
    For the third term, we compute
\begin{align*}
\text{Term}_3&=\int_0^T\int_{\R^d}\left\vert \int_{\R^{d(m-1)}} b_j^i(x^j,y^{-j})\left( \tilde \rho_t^{x^j}(y^{-j})-\frac{\tilde\rho_t(x^j,y^{-j})}{\tilde \rho_t^j(x^j) \vee \varepsilon}\right)\,dy^{-j} \right\vert^2\,d\mu_t^j(x^j)\,dt\\
&=\int_0^T\int_{\tilde\rho_t^j(x^j)\leq \varepsilon} \left\vert\int_{\R^{d(m-1)}} b(x^j,y^{-j})\left( \tilde \rho_t^{x^j}(y^{-j})-\frac{\tilde \rho_t(x^j,y^{-j})}{\tilde \rho_t^j(x^j) \vee \varepsilon}\right)\,dy^{-j}\right\vert^2\,d\mu_t^j(x^j)\,dt\\
&\leq \int_0^T\|b_j^i\|_\infty^2 \mu_t^j(\tilde\rho_t^j\leq \varepsilon)\,dt.
\end{align*}

We repeatedly use the variational representation of relative entropy and the elementary inequality $\log(1+x)\leq x$ to estimate 
\begin{align*}
    \mu_t^j(\tilde\rho^j_t <\varepsilon)& \leq \ent(\mu_t^j\,\|\,\tilde\mu_t^j)+\log \left(\int_{\R^d} \exp\left\{\mathbf{1}_{\tilde\rho_t^j\leq \varepsilon}(x^j)\right\}\,d\tilde\mu_t^j(x^j)\right)\\
    &\leq \ent(\mu_t^j\,\|\,\tilde\mu_t^j)+\log \left(1+e\tilde\mu_t^j(\tilde \rho_t^j<\varepsilon)\right)\\
    &\leq \ent(\mu_t^j\,\|\,\tilde\mu_t^j) + e\tilde\mu_t^j(\tilde \rho_t^j<\varepsilon).
    \end{align*}
Repeat the same procedure again on $\tilde\mu_t^j(\tilde\rho_t^j<\varepsilon)$ to find
\begin{align*}
    \mu_t^j(\tilde\rho^j_t <\varepsilon)&\leq \ent(\mu_t^j\,\|\,\tilde\mu_t^j) + e\ent(\tilde\mu^j_t\,\|\,\tilde\rho_t^j)+e^2\tilde\rho_t^j(\tilde\rho_t^j < \varepsilon)\\
    &\leq \ent(\mu_t\,\|\,\tilde\mu_t)+e\ent(\mu_t\,\|\,\rho_t)+e^2\tilde\rho_t^j(\tilde\rho_t^j < \varepsilon);
\end{align*}
the last inequality follows  by the data processing inequality and the chain rule (which gives $\ent(\tilde \mu_t^j\,\|\,\tilde\rho_t^j)\leq\ent( \mu_t^j\,\|\,\rho_t^j)\leq \ent(\mu_t\,\|\,\rho_t)$). Finally we estimate, for an arbitrary $p\in (0,1)$ and $q\geq 1$,
\begin{align*}
    \tilde\rho_t^j(\tilde\rho_t^j < \varepsilon)&=\tilde \rho_t^j\left(\frac{1}{\tilde \rho_t^j}\geq \frac 1\varepsilon \right)\leq \varepsilon^{1-p} \int _{\R^d} (\tilde\rho^j_t)^{p}\,dx=\varepsilon^{1-p}\int_{\R^d} (\tilde \rho_t^j)^p \cdot \left(\frac{C+|x|^q}{C+|x|^q}\right)^{p}\,dx\\
    &\leq \varepsilon^{1-p}\left(p\int_{\R^d} (C+|x|^q) \tilde \rho_t^j(x)\,dx+(1-p)\int_{\R^d} \left(\frac{1}{C+|x|^q}\right)^\frac{p}{1-p}\,dx\right).
\end{align*}
The first inequality is by Markov's inequality, while the second the second is Young's inequality. This inequality holds for every $q\geq1, C>0$, and thus we can just take $q,C$ large enough that the second integral is finite, since $\tilde \rho_t^j$ has finite $q$-moment for every finite time $t$ and $q$, uniformly in $\varepsilon$ and $h$ (this follows by Assumptions \ref{assumption: initial regularity}(2), \ref{assumption: Kernel} and boundedness of the drift of \eqref{eq: intermediate problem}).
Putting everything together we find for every $p\in (0,1)$
\begin{equation}
\label{eq: term 3}
    \text{Term}_3\leq 4\|b^i_j\|_\infty^2 \left(\varepsilon^p C_{p,T}+ \int_0^T \ent(\mu_t\,\|\,\tilde\mu_t)+e\ent(\mu_t\,\|\,\rho_t)\,dt\right) 
\end{equation}
The constant $C_{p,T}$ can be taken to be
\[C_{p,T}=e^2\inf_{q,C}\left\{ (1-p)\int_0^T\int_{\R^d} (C+|x|^q) \tilde \rho_t^j(x)\,dx\,dt+Tp\int_{\R^d} \left(\frac{1}{C+|x|^q}\right)^\frac{1-p}{p}\,dx\right\},\]
We now go back to Equation \eqref{eq: bound on a}. Using the bounds \eqref{eq: term 1},\eqref{eq: term 2},\eqref{eq: term 3} and changing $C$ from line to line, we arrive at
\begin{align*}
\int_0^T &\int_{\R^d} \left|\int_{\R^{d(m-1)}}  b_j^i(x^j,y^{-j})\left(\frac{\tilde\rho_t(x^j,y^{-j})}{\tilde \rho^j_t (x^j) \vee \varepsilon} -\mu_t^{x^j}(y^{-j})\right)\,dy^{-j}\right|^2\,d\mu_t^j(x^j)\,dt
\\&\leq C\varepsilon^p+ C\int_0^T \ent(\mu_t\,\|\,\tilde\mu_t)\,dt+C\int_0^T\ent(\mu_t\,\|\,\rho_t)\,dt\\
&\leq C\varepsilon^p + C h^2+C\int_0^T\ent(\mu[t]\,\|\,\rho[t])\,dt,
\end{align*}
with the last inequality following from Lemma \ref{lm: bound on time integral of relative entropy} (for the time integral of $\ent(\mu_t\,\|\,\tilde \mu_t)$) and the chain rule. By a symmetric line of reasoning on the other $b^i_j$ terms, we arrive at
\begin{equation*}
    \ent \left(\mu[T]\,\|\,\rho[T]\right) \leq \varepsilon^p C+ Ch^2+C\int_0^T\ent(\mu[t]\,\|\,\rho[t])\,dt
\end{equation*}
By Gronwall we conclude that for some constant $C$ that is allowed to depend on $p,b,\mu_0,\sigma,m,d$, we obtain
\[\ent \left(\mu[T]\,\|\,\rho[T]\right)\leq C\left(h^2 +\varepsilon^p\right).\]
\end{proof}
We conclude by putting together all previous estimates. 
    \begin{proof}[Proof of Theorem \ref{th: main}]
    By the triangle inequality and Pinsker's inequality we have
    \begin{align*}
        \|\pi^k[t]-\mu^{\otimes k}[t]\|_{\textup{TV}}&\leq\|\pi^k[t]-\rho^{\otimes k}[t]\|_{\textup{TV}}+\|\rho^{\otimes k}[t]-\mu^{\otimes k}[t]\|_{\textup{TV}}\\
        &\leq \sqrt {2\ent (\pi^k[t]\,\|\,\rho^{\otimes k}[t])}+\sqrt {2\ent (\mu^{\otimes k}[t]\,\|\,\rho^{\otimes k}[t])}
    \end{align*}
    Since $\rho_t^{\otimes k}$ and $\mu^{\otimes k}$ are product measure, the tensorization property of relative entropy yields
    \begin{equation*}
            \|\pi^k[t]-\mu^{\otimes k}[t]\|_{\textup{TV}}\leq C\left( \sqrt{\ent(\pi^k[t]\,\|\,\rho^{\otimes k}[t])}+\sqrt{k \ent (\mu[t]\,\|\,\rho[t])}\right) 
        \end{equation*}
        By putting together the estimates from Proposition \ref{eq: regularized process to MKV} and Proposition \ref{pr: particles to intermediate}, for every $r<1$ we have
        \begin{equation*}
            \|\pi^k[t]-\mu^{\otimes k}[t]\|_{\textup{TV}}\leq C\sqrt k\left(\frac{e^{\frac{C}{\varepsilon^2 h^d}}}{\varepsilon \sqrt{h^d}\sqrt{n }}+\varepsilon^{r\slash2}+h\right)
        \end{equation*}
        Setting $\varepsilon= h^{2/r}\sqrt C $, $h=(\frac{1}{4}\log n)^{-\frac{r}{dr+4}}$ yields
        \begin{equation*}
            \|\pi^k[t]-\mu^{\otimes k}[t]\|_{\text{TV}}\leq C \frac{\sqrt k}{(\log n)^{\frac{r}{dr+4}}}.
        \end{equation*}
    \end{proof}

\section{Regularity Estimates and Proof of Lemma \ref{lm: bound on time integral of relative entropy}}
\label{sect: regularity estimates}
The goal of this Section is to prove the technical Lemma \ref{lm: bound on time integral of relative entropy}. To control $\ent (\mu_t\,\|\,\mu_t \star_j K_h)$, we need to interpolate between $\log \mu_t$ and $\log \mu_t \star_j K_h$; to do so effectively we need $\log \mu_t$ to have some Sobolev regularity. In particular, we will show that controlling $\ent (\mu_t\,\|\,\mu_t \star_j K_h)$ reduces to checking that $\log \mu_t$ has two weak derivatives that satisfy $\int |\nabla^2 \log \mu_t|^3\,d\mu_t<\infty$.
Since a priori it is not clear that $\mu_t$ is regular enough, we need to perform a mollification procedure to justify calculations rigorously. Once we check that $\log\mu_t$ belongs to $H^2_{\text{loc}}(\R^{dm})$, we obtain further regularity of the non-linear coefficient, which in turn allows us to close the proof. \\

Our approach is related to the techniques developed in \cite[Chapter 7.4]{bogachev2022fokker}, but at higher orders. The estimates tread a similar path as \cite[Appendix A]{Chaintron2024}, especially for the control of $\nabla^3 \log\mu_t$, but our set-ups are quite different: they work with non identity diffusion but can rely on global Lipschitz assumptions, while our coefficients are a priori only in $L^\infty(\R^d)$. As a result we do not use a time reversal argument, but work at the level of the mollified density and bootstrap our way to higher regularity. From now on, we adopt the more compact notation 
\begin{equation*}
    dX_t= f_t (X_t)\,dt+\sqrt2\sigma\,dW_t
\end{equation*}
for Equation (\ref{eq: main SDE}), with $f_t=(f_t^i)_{i=1}^m$ and
\begin{align*}
    f^i_t(x)=
        \sum_{j=1}^m \E\left[b^i_j(X_t) \,\big\vert\, X_t^j=x^j\right]+V^i(x).
\end{align*}

\subsection{Preliminary regularity and mollification}
The fact that $f_t$ is bounded and assumption \ref{assumption: initial regularity} provide some preliminary regularity using results from \cite{bogachev2022fokker}, which we summarize here. In particular, we can obtain that $f_t$ is in fact weakly differentiable.
\begin{lemma}[Preliminary regularity] Under assumption \ref{assumption: initial regularity}, the following holds.
\label{Lm: derivatives of CE} \begin{enumerate}
    \item For almost every $t\geq 0$, $\mu_t \in W^{1,1}(\R^d)$. Moreover $\mu_t$ is locally bounded away from $0$ for every $t\geq 0$.
    \item Fisher information is finite: for almost every $t>0$, $\log \mu_t \in H^1_{\textup{loc}}(\R^{dm})$ and 
    \begin{equation}
        \label{eq: finite fisher info} \int_0^T\int_{\R^{dm}}|\nabla \log \mu_t(x)|^2\mu_t(x)\,dx\,dt<\infty.
    \end{equation}
    \item For almost every $t>0$, $\mu_t \in H^1(\R^d)$. Moreover  \[\|\mu\|_{L^\infty(\R^{dm}\times [0,T])}<\infty.\]
\item for almost every $t$, $f_t \in H^{1}_\textup{loc}(\R^{dm})$ and 
    \begin{equation}
    \label{eq: finite second derivative of f}
        \sum_{i=1}^m\int_0^T \int_{\R^{dm}}|\partial_i f_t( x)|^2\mu_t(x)\,dx\,dt<\infty.
    \end{equation} 
\end{enumerate}
\end{lemma}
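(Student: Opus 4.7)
My plan is to exploit the fact that the Fokker--Planck equation for $\mu_t$,
\[\partial_t \mu_t = \sigma^2 \Delta \mu_t - \nabla \cdot (f_t \mu_t),\]
has bounded drift (since $b$ and $V$ are bounded) and non-degenerate constant diffusion, so that the ``linear'' parts of the statement follow from the classical parabolic theory of \cite[Chapters 6--7]{bogachev2022fokker}. I would treat the four assertions in the order $(1) \to (3) \to (2) \to (4)$, each feeding the next.

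For (1), I would combine $\mu_0 \in H^2(\R^{dm})$ from \ref{assumption: initial regularity}(3) with the standard parabolic estimates of \cite[Chapter 7]{bogachev2022fokker} to get $\mu_t \in W^{1,1}(\R^{dm})$ for a.e.\ $t$, while the parabolic Harnack inequality for bounded drift and uniformly elliptic diffusion, together with the local lower bound on $\mu_0$ from \ref{assumption: initial regularity}(4), gives the local positivity of $\mu_t$ for every $t$. For (3), the $L^\infty$ bound is a consequence of Aronson-type upper bounds on the transition density, and $H^1$ follows from the standard energy estimate on $\int \mu_t^2\,dx$: the cross term $\int f_t \mu_t \cdot \nabla \mu_t\,dx$ is absorbed by Young's inequality, giving $\mu \in L^\infty(0,T;L^2) \cap L^2(0,T;H^1)$ after invoking $\mu_0 \in L^2$ (which follows from \ref{assumption: initial regularity}(3)).

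For (2), I would apply the entropy dissipation identity
\[\frac{d}{dt}\int \mu_t \log \mu_t\,dx = -\sigma^2 \int |\nabla \log \mu_t|^2\,d\mu_t + \int f_t \cdot \nabla \log \mu_t\,d\mu_t,\]
absorb the cross term by Young's inequality, and integrate in time using $\int|\log \mu_0|\,d\mu_0 < \infty$ from \ref{assumption: initial regularity}(3). Local $H^1$ regularity of $\log \mu_t$ then follows from $\mu_t$ being locally bounded above (by (3)) and away from zero (by (1)).

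The main obstacle is (4), where the non-local structure of $f_t$ enters. For each fixed $j$, set $g_j(x^j) = \E[b^i_j(X_t)\mid X^j_t=x^j]$ and note the consequence $\partial_{x^j} \log \mu^j_t(x^j) = \E[\partial_{x^j}\log \mu_t \mid X^j_t = x^j]$ of the chain rule after marginalization; a direct computation then yields the clean expression
\[\partial_{x^j} g_j(x^j) = \E\bigl[\partial_{x^j} b^i_j \,\big|\, X^j_t=x^j\bigr] + \mathrm{Cov}\bigl(b^i_j,\, \partial_{x^j}\log \mu_t \,\big|\, X^j_t=x^j\bigr),\]
while $\partial_{x^k} g_j \equiv 0$ for $k \ne j$. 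Applying Cauchy--Schwarz to the conditional covariance and using $\|b\|_\infty<\infty$ yields
\[\int |\partial_{x^j} g_j|^2 \,d\mu^j_t(x^j) \le C + \|b\|_\infty^2 \int |\partial_{x^j}\log \mu_t|^2 \,d\mu_t,\]
whose time integral is finite by (2). The delicate point is to justify this pointwise identity in the weak-derivative sense: it requires $\mu^j_t$ to be locally bounded below (from (1)) so that the quotient defining $g_j$ is safely differentiable, and $\mu_t \in H^1$ in the $x^j$ variable (from (3)). Together these yield $f_t \in H^1_{\textup{loc}}(\R^{dm})$ for a.e.\ $t$ together with the claimed weighted $L^2$ bound.
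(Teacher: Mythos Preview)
Your proposal is correct and follows essentially the same architecture as the paper: invoke the linear parabolic theory of \cite{bogachev2022fokker} (specifically Theorems~7.4.1, 8.2.1 and Corollary~7.3.8) for items (1)--(3), then compute the weak derivative of the conditional expectation explicitly for (4). Your covariance formulation $\partial_{x^j} g_j = \E[\partial_{x^j}b^i_j \mid X^j_t] + \mathrm{Cov}(b^i_j,\partial_{x^j}\log\mu_t \mid X^j_t)$ is exactly the paper's three-term expansion after substituting the identity $\partial_{x^j}\log\mu^j_t = \E[\partial_{x^j}\log\mu_t \mid X^j_t]$ that you mention, and your direct $L^2$ energy estimate for (3) is a valid alternative to the paper's route $\int|\nabla\mu_t|^2\,dx \le \|\mu\|_{L^\infty}\int|\nabla\log\mu_t|^2\,d\mu_t$ (which uses (2) first, rather than afterwards).
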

\begin{proof}
\begin{enumerate}
\item By Assumption \ref{assumption: initial regularity}(3), $\int |\log \mu_0|\,d\mu_0<\infty$; since $f_t$ is uniformly bounded, the assumptions of \cite[Theorem 7.4.1]{bogachev2022fokker} are satisfied and we conclude that $\mu_t \in W^{1,1}(\R^d)$ for almost every $t>0$. As a result, we can apply \cite[Theorem 8.2.1]{bogachev2022fokker} to obtain that $\mu_t$ is locally bounded away from zero for every $t>0$; the case $t=0$ is assumed.
\item  By \cite[Theorem 7.4.1]{bogachev2022fokker}, 
    \[\int_0^T\int_{\R^{dm}} \left|\frac{\nabla \mu_t(x)}{\mu_t(x)}\right|^2\mu_t(x)\,dx\,dt<\infty.\]
    Moreover, since $\mu_t$ is locally bounded away from zero, we can use the usual chain rule for weak derivatives to obtain that $\log\mu_t \in H^{1}_{\textup{loc}}(\R^{dm})$ and $\nabla\log\mu_t= \nabla\mu_t /\mu_t$.
    \item Boundedness of the drift and \ref{assumption: initial regularity}(4)  imply that $\|\mu\|_{L^\infty(\mathbb{R}^{dm}\times[0,T])}<\infty$ by \cite[Corollary 7.3.8]{bogachev2022fokker}. Then, 
    \begin{align*}
        \int_0^T \int_{\R^{dm}}|\nabla \mu_t(x)|^2\,dx&\leq \|\mu\|_{L^\infty(\mathbb{R}^{dm}\times[0,T])}\int_0^T \int_{\R^{dm}}|\nabla\log\mu_t(x)|^2\mu_t(x)\,dx\,dt<\infty.
    \end{align*}
    Hence for almost every $t>0$, $\mu_t \in H^1(\R^d)$. 
    \item Differentiability of $V^i$ is assumed. Since $\mu_t$ can be taken to be a locally H\"older continuous density function, we can identify the conditional expectation terms with their continuous version
    \begin{equation*}
    \E\left[b_j^i(X_t)\mid X^j_t=x^j\right]=\frac{1}{\mu_t^j(x^j)}\int_{\R^{d(m-1)}} b^i_j(x^j,y^{-j})\mu_t(x^j,y^{-j})\,dy^{-j}.
    \end{equation*}
     The marginal density $\mu_t^j$ must be locally bounded away from zero by point (1); moreover since $\nabla \log\mu_t^j(X_t^j)=\E[\nabla_j \log \mu_t(X_t)\mid X_t^j]$ we get
     \[\E[|\nabla \log \mu_t^j(X^j_t)|^2]=\E\left[\left|\E[\nabla_j \log \mu_t(X_t)\mid X_t^j]\right|^2\right]<\infty\] 
     for almost every $t$. On the other hand, since $b^i_j$ is bounded with bounded derivatives and $|\nabla \log \mu_t| \in L^2(\mu_t)$ by the previous point, we can use Lemma \ref{lm: derivative under integral} to show that in the sense of weak derivatives 
    \begin{align*}
    \partial_{x_k}\int b^i_j(x^j,y^{-j})\mu_t(x^j,y^{-j})\,dy=\int \partial_{x_k}(b^i_j(x^j,y^{-j})\mu_t(x^j,y^{-j}))\,dy.    
    \end{align*}
    We use the chain rule for weak derivatives to check that $\nabla (1/\mu_t)= -\nabla\mu_t / (\mu_t)^2$ since $\mu_t>0$; the above discussion implies that
    \begin{align*}\int b^i_j(\cdot,y^{-j})\mu_t(\cdot,y^{-j})\,dy\,,\,\partial_{x_k}\int b^i_j(\cdot,y^{-j})\mu_t(\cdot,y^{-j})\,dy,\,\frac{1}{\mu_t^j}, \nabla\left( 
        \frac{1}{\mu_t^j}\right) \in L^2_{\text{loc}}(\R^d).
    \end{align*}
    We can then employ the product rule for weak derivatives, and thus obtain
    \begin{align*}
            \partial_{x_k}\left(\frac{1}{\mu_t^j(x^j)}\int b^i_j(x^j,y^{-j})\mu_t(x^j,y^{-j})\,dy\right)&=-\partial_{x_k} \log \mu_t^j(x^j) \E[b^i_j(X_t)\mid X^j_t=x^j] \\
            &+ \E[\partial_{x_k} b^i_j(X_t)\mid X^j_t=x^j]\\
        &+ \E[b^i_j(X_t)\partial_{x_k}\log \mu_t(X_t)\mid X^j_t=x^j]\\
        &\leq C\left(1+|\partial_{x_k}\log \mu_t^j(x^j)|+\E[|\partial_{x_k} \log \mu_t(X_t)|\mid X^j_t]\right).
    \end{align*}
    The integral bound \eqref{eq: finite second derivative of f} follows by Jensen inequality and by the bound \eqref{eq: finite fisher info}.
\end{enumerate}

\end{proof}

To prove the required regularity of the density $\mu_t$, we first perform a mollification procedure in space and in time;  we need to do so because a priori, the density $\mu_t$ does not have enough regularity (neither in time nor space) to compute the time derivatives that will be the backbone of our approach. This procedure was pioneered by \cite{DiPernaLions} and is by now common in the literature on SDEs with rough coefficients, appearing for instance in \cite{FigalliRoughSDE}, \cite{Trevisan_superposition} and \cite{Ambrosio2008-dt}. One key point already observed in \cite{Trevisan_superposition} is that the mollification procedure preserves integral and uniform bounds, and thus does not change our control on the conditional expectation term; moreover, it preserves the control over integrals of the drift and its derivatives.
In what follows, recall that $\mu_t$ is a weak solution to the Fokker--Planck PDE
\begin{align*}
    \partial_t \mu_t = -\text{div}(f_t\mu_t)+\sigma^2 \Delta\mu_t,
\end{align*}
meaning that for every compactly supported, smooth function $\varphi:[0,T]\times \R^{dm}\to \R $ it holds 
\begin{equation*}
    \int_0^T \int_{\R^{dm}} \left(\partial_t \varphi_t(x)+ \nabla \varphi_t(x)\cdot f_t(x)+\sigma^2 \Delta \varphi_t(x)\right)\mu_t(x)\,dx\,dt=0.
\end{equation*}
Let $\eta \in C^\infty_b(\R^d)$ be a probability measure with $|\nabla^k \eta|\leq C_k \eta $ for every $k$ and denote $\eta^r(x)=\frac{1}{r^{dm}}\eta(x\slash r)$ for some $r>0$. Consider the probability density 
        \begin{equation*}
            \mu_t^r=\mu_t\star \eta^r=\int_{\R^{dm}} \mu_t(x-rz)\eta(z)\,dz.
        \end{equation*}
Using the fact that derivatives and convolutions commute, we obtain that $\mu^r_t$ solves (in the weak sense) the PDE
    \begin{align}
    \label{eq: PDE r}
    \begin{split}
        \partial_t \mu_t^r &= -\text{div}(\mu_t^r f^r_t)+\sigma^2  \Delta \mu_t^r,\\
        \mu_0^r&=\mu_0\star \eta^r.
    \end{split}
    \end{align}
    with $f_t^r= (f_t\mu)\star \eta^r \slash \mu^r$; from a probabilistic perspective, we can interpret the new mollified drift as 
    \begin{equation*}
        f_t^r( x)=\E[f_t({X}_t)\mid {X}_t+rZ= x],
    \end{equation*}
    where $Z$ is independent of $X_t$ and $\text{Law}(Z)=\eta$. Since $f_t$ is bounded, $f_t^r \in C_b^\infty(\R^d)$ (as was already observed in \cite[Theorem 2.5]{FigalliRoughSDE}).
    Now, take $\theta\in C_b^\infty(\R)$ to be again a probability measure supported on $(0,1)$. We further mollify $\mu_t^r$ in time: for $t\geq q$, define 
    \begin{equation*}
        \mu_t^{r,q}( x)=\int_0^1\mu^r_{t-sq}( x)\theta(s)\,ds;
    \end{equation*}
    we will denote by $\star_t$ the operation of convolution in the time variable. Let $\ell\geq q$; we conclude that $\mu_t^{r,q}$ is a classical (and smooth) solution of the PDE
    \begin{align}
        \label{eq:PDE q}
    \begin{split}
        \partial_t \mu_t^{r,q}&=-\text{div} (\mu_t^{r,q} f_t^{r,q})+\sigma^2 \Delta\mu_t^{r,q}\\
        \mu^{r,q}_{\ell}&=\mu^r_{\ell} \star_t \theta^q
    \end{split}
    \end{align}
    with
    \begin{equation*}
        f_t^{r,q}=\frac{(f_t^r \cdot \mu_t^r)\star_t \theta^q}{\mu_t^{r,q}}.
    \end{equation*}
    To be clear, we are starting the PDE from the initial distribution $\mu_\ell^{r,q}$, with $\ell\geq q>0$; this is to make sure that the integral defining $\mu_t^{r,q}$ does not involve $\mu_t^{r}$ for $t<0$.

We collect some useful properties of the mollified densities $\mu^r,\mu^{r,q}$ and of the coefficients $f^{r,q},f^r$; we will state them in this Section but postpone their proofs to Appendix \ref{Appendix: mollification estimates}.
\begin{lemma}[Properties of Mollified Densities]
\label{lm: properties of mollified densities} The following holds:
    \begin{enumerate}
        \item For every $t\geq 0$ and each $r>0$ , $q\geq 0$ and $k\in \N$, there exists a constant $C_r^k$ not depending on $q$ such that
        \begin{align*}
        |\nabla^k\log \mu_t^r|+|\nabla^k \log \mu_t^{r,q}|\leq C^k_r.
        \end{align*}
        \item For every $t\geq 0, r>0$ and $ q>0$, $f_t^r,f_t^{r,q} \in C_b^3(\R^d)$.
        \item The density $\mu_t^r$ and its space derivatives $\partial_i\mu_t^r,\partial_j \partial_i\mu_t^r,\partial_l\partial_j\partial_i \mu_t^r$ are absolutely continuous in time.
        \item If $\mu_t \in H^1(\R^d)$, for every $p\geq1$, $n \in \N$, $i\in [md]$ we have
        \begin{equation*}
            \int_{\R^{dm}}|\partial_i^n \mu_t^{r}(x)/\mu_t^{r}(x)|^p\mu_t^r(x)\,dx\leq \int_{\R^{dm}}|\partial_i^n  \mu_t(x)/\mu_t(x)|^p\mu_t(x)\,dx.
        \end{equation*}
        \item For each $i$, each $\ell\geq q>0$ and $p\geq1$ we have 
        \[\int_\ell^T \int_{\R^{dm}}|\partial_i\log\mu^{r,q}_t(x)|^p\mu_t^{r,q}(x)\,dx\,dt\leq \int_0^T \int_{\R^{dm}}|\partial_i \log \mu_t(x)|^p \mu_t(x)\,dx.\]
        \item For each $i$ and $p\geq 1$ we have
    \begin{align*}
        \int_{\ell}^T\int_{\R^{dm}} |\partial_i f_t^{r,q}(x)|^p\mu_t^{r,q}(x)\,dx\,dt&\leq \int_{0}^T\int_{\R^{dm}} C^p(1+|\partial_i \log \mu_t(x)|^p)\mu_t(x)\,dx\,dt
    \end{align*}
    Where $C$ does not depend on $r,q$ or $\ell$.
    \end{enumerate}
\end{lemma}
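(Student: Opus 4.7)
The plan rests on two structural facts. First, $\mu_t^r = \mu_t \star \eta^r$ and $\mu_t^{r,q} = \int_0^1 \mu_{t-sq}^r\theta(s)\,ds$ are mixtures, so ratios like $\partial_i^n \mu_t^r(x)/\mu_t^r(x)$ can be written as conditional expectations of $\partial_i^n\mu_t/\mu_t$ under explicit probability kernels, and Jensen's inequality transfers moment bounds essentially for free. Second, the hypothesis $|\nabla^k\eta|\le C_k\eta$ scales to $|\nabla^k\eta^r|\le C_k r^{-k}\eta^r$ and, after moving derivatives onto the kernel, yields the pointwise bound $|\nabla^k\mu_t^r|\le C_k r^{-k}\mu_t^r$. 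Combined with Faà di Bruno for the logarithm, this immediately gives (1) for $\mu_t^r$; the same bound lifts to $\mu_t^{r,q}$ uniformly in $q$ because averaging in $s$ preserves the pointwise inequality. For (2), I would write $f_t^r=[(f_t\mu_t)\star\eta^r]/\mu_t^r$; the numerator is smooth with derivatives bounded analogously by $\|f\|_\infty C_k r^{-k}\mu_t^r$, and the quotient rule together with (1) and pointwise positivity of $\mu_t^r$ gives $C_b^3$ regularity (the time mollification does not disturb spatial smoothness). For (3), differentiating $\mu_t^r(x)=\int \mu_t(y)\eta^r(x-y)\,dy$ in $t$ via the weak Fokker--Planck equation tested against $\partial_i^k\eta^r(x-\cdot)$ produces an explicit bounded formula for $\partial_t\partial_i^k\mu_t^r(x)$, giving absolute continuity.

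The integral estimates (4) and (5) reduce to a single Jensen computation. For (4), the identity
\[\frac{\partial_i^n\mu_t^r(x)}{\mu_t^r(x)}=\int \frac{\partial_i^n\mu_t(y)}{\mu_t(y)}\cdot\frac{\mu_t(y)\eta^r(x-y)}{\mu_t^r(x)}\,dy\]
expresses the left-hand side as a conditional expectation under the probability kernel $y\mapsto\mu_t(y)\eta^r(x-y)/\mu_t^r(x)$; raising to the $p$th power, integrating against $\mu_t^r(x)\,dx$, and using Fubini together with $\int\eta^r(x-y)\,dx=1$ yields the claim. For (5), the same argument applies with the joint probability density $\nu_{t,x}(s,y)=\mu_{t-sq}(y)\eta^r(x-y)\theta(s)/\mu_t^{r,q}(x)$ on $[0,1]\times\R^{dm}$; integrating in $x$ and $t\in[\ell,T]$ and performing the change of variables $u=t-sq$ (valid because $\ell\ge q$ ensures $u\ge 0$) together with $\int_0^1\theta(s)\,ds=1$ produces the bound.

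For (6), I would start with the Leibniz identity $\partial_i(f_u\mu_u)=\mu_u[\partial_i f_u+f_u\partial_i\log\mu_u]$, legitimate in the weak sense by Lemma \ref{Lm: derivatives of CE}. Applying the quotient rule to $f_t^{r,q}=[(f_t^r\mu_t^r)\star_t\theta^q]/\mu_t^{r,q}$ yields
\[|\partial_i f_t^{r,q}(x)|\le \frac{\int_0^1\!\int \mu_{t-sq}(y)\bigl[|\partial_i f_{t-sq}(y)|+\|f\|_\infty|\partial_i\log\mu_{t-sq}(y)|\bigr]\eta^r(x-y)\theta(s)\,dy\,ds}{\mu_t^{r,q}(x)}+\|f\|_\infty|\partial_i\log\mu_t^{r,q}(x)|.\]
The Jensen argument from (5) bounds the first term in $L^p(\mu_t^{r,q})$ by an average of $\int|\partial_i f_u|^p\mu_u+\int|\partial_i\log\mu_u|^p\mu_u$ over $u\in[0,T]$, and (5) itself handles the second. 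Finally, the explicit weak derivative $\partial_i f_u$ computed at the end of the proof of Lemma \ref{Lm: derivatives of CE} has the form $C(1+|\partial_i\log\mu_u(x)|+\E[|\partial_i\log\mu_u(X_u)|\mid X_u^j])$, so Jensen for conditional expectations (applied after integrating against $\mu_u$) gives $\int|\partial_i f_u|^p\mu_u \le C^p\int(1+|\partial_i\log\mu_u|^p)\mu_u$, which is the stated form. The main obstacle is bookkeeping: each Leibniz and quotient manipulation must remain within the class of weak derivatives covered by Lemma \ref{Lm: derivatives of CE}, and the kernel $\nu_{t,x}$ must genuinely integrate to one, since any failure to normalize would break the Jensen step.
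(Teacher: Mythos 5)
Your proof is correct and follows essentially the same route as the paper: the kernel-domination bound $|\nabla^k\eta^r|\le C_k r^{-k}\eta^r$ for (1)--(2), differentiating the Fokker--Planck identity in time for (3), and Jensen's inequality for the induced probability kernels together with the change of variables $u=t-sq$ (protected by $\ell\ge q$) for (4)--(6). In (6) you collapse the paper's two-stage computation $\partial_i f^r_t\to\partial_i f^{r,q}_t$ into a single quotient rule plus one joint Jensen step, which is a bookkeeping simplification rather than a different idea, and the pointwise bound you quote from Lemma \ref{Lm: derivatives of CE}(4) should have $|\partial_i\log\mu_u^j(x^j)|$ where you wrote $|\partial_i\log\mu_u(x)|$ --- a harmless slip, since both terms are conditional expectations of $|\partial_i\log\mu_u(X_u)|$ and the conditional-Jensen step you invoke absorbs either.
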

     \begin{remark}
         In what follows we will perform integration by parts repeatedly. These operations can be justified rigorously by the boundedness of all the involved functions.
     \end{remark}
\subsection{Second derivatives and bootstrapping} We now proceed with the energy estimates. The idea is to compute the time derivatives of $\int|\nabla \log \mu_t^{r,q}|^2\,d\mu_t^{r,q}$, obtain bounds that do not depend on the mollification parameters and then pass to the limit as the regularization converges to $0$; this will be made possible by the conditional expectation structure embedded in Lemma \ref{lm: properties of mollified densities}(6).

\begin{proposition}[Second derivatives, square moments]
\label{pr: second derivatives square moments}
    Under assumption \ref{assumption: initial regularity}, it holds that for almost every $t>0$, $\mu_t \in H^2(\R^d)$, $\log \mu_t \in H^2_\textup{loc}(\R^d)$ and 
    \begin{equation*}
    \int_0^{T}\int_{\R^{dm}} \left(\|\nabla ^2\mu_t(x)/\mu_t(x)\|_{\textup{ Fr}}^2+|\nabla^2 \log \mu_t(x)|^2+|\nabla\log\mu_t(x)|^4\right)\mu_t(x)\,dx\,dt \leq C<\infty.
    \end{equation*}
\end{proposition}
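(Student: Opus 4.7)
The strategy is to derive energy estimates for the Fisher information $\int|\nabla\log\mu_t^{r,q}|^2\,d\mu_t^{r,q}$ and its fourth-moment analogue $\int|\nabla\log\mu_t^{r,q}|^4\,d\mu_t^{r,q}$ on the mollified density from Lemma~\ref{lm: properties of mollified densities}, obtain bounds uniform in the mollification parameters $(r,q,\ell)$, and pass to the limit via lower semi-continuity of Sobolev seminorms under convolution. The claimed bound on $\int\|\nabla^2\mu_t/\mu_t\|_{\textup{Fr}}^2\,d\mu_t$ then follows from the identity $\nabla^2\mu/\mu=\nabla^2\log\mu+\nabla\log\mu\otimes\nabla\log\mu$ combined with the $L^2(d\mu)$ bound on $\nabla^2\log\mu$ and the $L^4(d\mu)$ bound on $\nabla\log\mu$.

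Writing $g=\log\mu^{r,q}$ and using $\partial_t g=-f^{r,q}\cdot\nabla g-\text{div}(f^{r,q})+\sigma^2(\Delta g+|\nabla g|^2)$, differentiation and integration by parts (justified by Lemma~\ref{lm: properties of mollified densities}(1)--(2)) yield
\[\frac{d}{dt}\int|\nabla g|^2 d\mu^{r,q}=-2\sigma^2\int|\nabla^2 g|^2 d\mu^{r,q}-2\int(\nabla g)^T\nabla f^{r,q}\,\nabla g\,d\mu^{r,q}-2\int\nabla g\cdot\nabla\,\text{div}(f^{r,q})\,d\mu^{r,q}.\]
The last term naively requires $\nabla^2 f^{r,q}$, which I do not have; I handle it by integrating once more against $\mu^{r,q}$ (using $\nabla\mu^{r,q}=\mu^{r,q}\nabla g$), trading $\nabla\,\text{div}(f^{r,q})$ for products of $\nabla f^{r,q}$ with $\Delta g$ and $|\nabla g|^2$. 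Young's inequality absorbs $|\nabla^2 g|$ factors into a fraction of the dissipation, reducing the drift contribution to $C\int|\nabla f^{r,q}|^2\,d\mu^{r,q}+C\int|\nabla g|^4\,d\mu^{r,q}$; Lemma~\ref{lm: properties of mollified densities}(6) at $p=2$ together with Lemma~\ref{Lm: derivatives of CE}(2) makes the first term integrate to a uniform-in-$(r,q,\ell)$ constant over $[0,T]$.

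The remaining $\int|\nabla g|^4 d\mu^{r,q}$ forces a parallel energy estimate. A direct computation for $\frac{d}{dt}\int|\nabla g|^4 d\mu^{r,q}$ produces the strong dissipation $-4\sigma^2\int|\nabla g|^2|\nabla^2 g|^2 d\mu^{r,q}-8\sigma^2\int|(\nabla^2 g)\nabla g|^2\,d\mu^{r,q}$, with the drift terms reducing, after the same integration-by-parts trick against the bounded $f^{r,q}$ (bounded since $\|b\|_\infty,\|V\|_\infty<\infty$) and Young's inequality calibrated to the two dissipation terms, to essentially $\int|\nabla g|^2|\nabla f^{r,q}|^2\,d\mu^{r,q}$. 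The critical closure input is the conditional-expectation structure of the drift: the pointwise bound $|\nabla f_t^{r,q}(x)|^2\le C(1+\sum_j\E[|\nabla g|^2\mid X^j=x^j])$ (a mollified version of the estimate implicit in the proof of Lemma~\ref{Lm: derivatives of CE}(4)) combined with Jensen's inequality and marginalization yields $\int|\nabla g|^2|\nabla f^{r,q}|^2\,d\mu^{r,q}\le C+C\int|\nabla g|^4\,d\mu^{r,q}$. Combining with Assumption~\ref{assumption: initial regularity}(3), which provides $\int|\nabla\log\mu_0|^4 d\mu_0<\infty$, a joint Gronwall argument on the Fisher and $L^4$ quantities closes uniform-in-$(r,q,\ell)$ bounds, and passing $r,q,\ell\to 0$ via Fatou finishes the proof.

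The main obstacle is the coupled, non-Lipschitz structure of the drift: without the conditional-expectation domination of $|\nabla f^{r,q}|^2$ by $\E[|\nabla g|^2\mid X^j]$, the cross term $\int|\nabla g|^2|\nabla f|^2 d\mu$ appearing in the $L^4$ energy would only be controllable by $\int|\nabla f|^4 d\mu\le C(1+\int|\nabla g|^4 d\mu)$, producing a circular dependence on the very quantity we are trying to prove. Turning this term instead into a conditional expectation lets Jensen collapse the estimate to $\int|\nabla g|^4 d\mu$ directly, which is the key analytic mechanism that makes the bootstrap possible; a secondary technical difficulty is the careful use of the dual dissipation $|\nabla g|^2|\nabla^2 g|^2 + |(\nabla^2 g)\nabla g|^2$ to absorb the cross terms created by the IBP against the bounded drift.
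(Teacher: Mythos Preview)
Your overall strategy (energy estimates on the mollified density, uniform in $r,q,\ell$, then pass to the limit) matches the paper, but you are missing the central simplification and as a result introduce unnecessary complications that also create a genuine gap.

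\textbf{The clean energy identity.} After the integration by parts you describe, the paper does \emph{one more} IBP on the term $-2\int\partial_i g\,\partial_i\text{div} f^{r,q}\,d\mu^{r,q}$ (moving the divergence onto $\partial_i g\,\mu^{r,q}$). When the resulting terms are combined with the contribution from $\partial_t\mu^{r,q}$, \emph{all} cross terms cancel exactly, yielding
\[
\frac{d}{dt}\int|\partial_i g|^2\,d\mu^{r,q}=2\int\nabla\partial_i g\cdot\partial_i f^{r,q}\,d\mu^{r,q}-2\sigma^2\int|\nabla\partial_i g|^2\,d\mu^{r,q}.
\]
There is \emph{no} residual $\int|\nabla g|^4$ term. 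Young's inequality then requires only $\int|\partial_i f^{r,q}|^2\,d\mu^{r,q}$, which by Lemma~\ref{lm: properties of mollified densities}(6) with $p=2$ is bounded in time-integral by the ordinary Fisher information $\int_0^T\int|\nabla\log\mu_t|^2\,d\mu_t\,dt$, already known to be finite by Lemma~\ref{Lm: derivatives of CE}(2). This gives $\sup_r\int_0^T\int\|\nabla^2\log\mu_t^r\|^2\,d\mu_t^r\,dt<\infty$ directly, with no Gronwall.

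\textbf{The $L^4$ bound is a consequence, not a parallel estimate.} Once the Hessian bound is in hand, the paper obtains $\int|\partial_i\log\mu_t^r|^4\,d\mu_t^r$ by the one-line IBP
\[
\int|\partial_i g|^4\,d\mu^r=-3\int|\partial_i g|^2\,\partial_i^2 g\,d\mu^r\le\tfrac{3\zeta}{2}\int|\partial_i g|^4\,d\mu^r+\tfrac{3}{2\zeta}\int|\partial_i^2 g|^2\,d\mu^r,
\]
so $\int|\partial_i g|^4\,d\mu^r\le C\int|\partial_i^2 g|^2\,d\mu^r$ after choosing $\zeta$ small. No time-differentiation of $\int|\nabla g|^4$ and no coupled Gronwall are needed for this proposition (that computation appears only later, in Lemma~\ref{lm: cross moments}, for a different purpose).

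\textbf{The gap in your argument.} Your pointwise claim $|\nabla f_t^{r,q}(x)|^2\le C\bigl(1+\sum_j\E[|\nabla g|^2\mid X^j=x^j]\bigr)$ with $g=\log\mu^{r,q}$ is not what the mollification gives. Inspecting the proof of Lemma~\ref{lm: properties of mollified densities}(6), $\partial_i f^{r,q}$ is a weighted average (against $\mu_{t-sq}^r\theta(s)$ and $\mu_t\eta^r$) of $\partial_i f_t$ and $\partial_i\log\mu_t$---the \emph{unmollified} objects---so the available control is the \emph{integral} bound $\int_\ell^T\int|\partial_i f^{r,q}|^p\,d\mu^{r,q}\,dt\le C(1+\int_0^T\int|\partial_i\log\mu_t|^p\,d\mu_t\,dt)$, not a pointwise conditional-expectation bound in terms of $\nabla\log\mu^{r,q}$. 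Consequently your Gronwall would have the mollified $\int|\nabla\log\mu^{r,q}_t|^4\,d\mu^{r,q}_t$ on the left but the unmollified $\int|\nabla\log\mu_t|^4\,d\mu_t$ on the right, and closing that loop requires precisely the a-priori finiteness you are trying to prove. Finding the clean identity above sidesteps this entirely: only $p=2$ is needed on $\nabla f^{r,q}$, and that is fed by the already-established Fisher bound.
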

\begin{proof}
Recall that $\mu_t^{r,q}$ is a smooth solution to the PDE \begin{align*}
    \partial_t \mu^{r,q}_t &= -\text{div} (f^{r,q}_t\, \mu^{r,q}_t) +\sigma^2 \Delta \mu^{r,q}_t
\end{align*}
And as a consequence,
    \begin{align*}
    \partial_t \log \mu^{r,q}_t &= -\text{div} f^{r,q}_t - \nabla \log \mu^{r,q}_t \cdot f^{r,q}_t +\sigma^2 \frac{\Delta \mu^{r,q}_t}{\mu^{r,q}_t} \\
    &= -\text{div} f^{r,q}_t - \nabla \log \mu^{r,q}_t \cdot f^{r,q}_t +\sigma^2 \Delta \log \mu^{r,q}_t + \sigma^2| \nabla \log \mu^{r,q}_t|^2.
\end{align*}
Differentiate in space to obtain
\begin{equation}
\label{eq: HJB PDE first derivative}
\partial_t 
    \partial_i\log \mu^{r,q}_t = -\partial_i\text{div} f^{r,q}_t - \partial_i\nabla \log \mu^{r,q}_t \cdot f^{r,q}_t-\nabla \log \mu^{r,q}_t \cdot \partial_i f^{r,q}_t +\sigma^2 \partial_i\Delta \log \mu^{r,q}_t + \sigma^2 \partial_i| \nabla \log \mu^{r,q}_t|^2
\end{equation}
Thanks to the boundedness and smoothness of $\nabla^{p} \log \mu_t^{r,q}$ for every $p\geq 1$ (from Lemma \ref{lm: properties of mollified densities}(1)), we can use Leibniz's rule to pass the derivative under the integral to obtain
\begin{align*}
    \frac{d}{dt}&\int_{\R^{dm}}|\partial_i \log \mu_t^{r,q}(x)|^2\,d\mu_t^{r,q}(x)=-\int_{\R^{dm}}|\partial_i \log \mu_t^{r,q}(x)|^2\text{div} (f_t^{r,q}(x) \mu_t^{r,q}(x)-\sigma^2 \nabla \mu_t^{r,q}(x))\,dx.\\
    & -2\int_{\R^{dm}} \partial_i\log \mu_t^{r,q}(x)\left(\partial_i\text{div} f^{r,q}_t(x) + \partial_i\nabla \log \mu^{r,q}_t(x) \cdot f^{r,q}_t(x)+\nabla \log \mu^{r,q}_t(x) \cdot \partial_i f^{r,q}_t(x)\right)\,d\mu_t^{r,q}(x)\\
    &+\sigma^2 \int_{\R^{dm}} \partial_i \log \mu_t^r(x) (\partial_i\Delta \log \mu_t^{r,q}(x)+\partial_i |\nabla \log \mu_t^{r,q}(x)|^2)\,d\mu_t^{r,q}(x).
\end{align*}
Integrating by parts the divergence terms and the Laplacian we get
\begin{align*}
   \frac{d}{dt}&\int_{\R^{dm}}|\partial_i \log \mu_t^{r,q}(x)|^2\,d\mu_t^{r,q}(x)=2\int_{\R^{dm}}\partial_i \log \mu_t^{r,q}(x)\partial_i\nabla \log \mu_t^{r,q}(x)(f_t^{r,q}(x) -\sigma^2 \nabla \log \mu_t^{r,q}(x))\,dx.\\
   &+2\int_{\R^{dm}} \nabla \partial_i \log \mu_t^{r,q}(x) \cdot \partial_i f_t^{r,q}(x)+\partial_i \log\mu_t^{r,q}(x)\partial_i f_t^{r,q}(x)\cdot \nabla \log \mu_t^{r,q}(x)\,d\mu_t^{r,q}(x)\\
    &-2\int_{\R^{dm}}\partial_i\log \mu_t^{r,q}(x)\left( \partial_i\nabla \log \mu^{r,q}_t(x) \cdot f^{r,q}_t(x)+\nabla \log \mu^{r,q}_t(x) \cdot \partial_i f^{r,q}_t(x)\right)\,d\mu_t^{r,q}(x)\\
    &-2\sigma^2 \int_{\R^{dm}} |\partial_i \nabla \log \mu_t^{r,q}(x)|^2+\partial_i \log \mu_t^{r,q}(x) \partial_i \nabla \log \mu_t^{r,q}(x)\cdot \nabla \log \mu_t^{r,q}(x)\,d\mu_t^{r,q}(x)\\
    &+2\sigma^2 \int_{\R^{dm}} \partial_i \log \mu_t^{r,q}(x) \partial_i |\nabla \log \mu_t^{r,q}(x)|^2\,d\mu_t^{r,q}.
\end{align*}
After performing the required cancellations, we are left with
    \begin{align*}
    \frac{d}{dt}\int_{\R^{dm}}|\partial_i \log \mu_t^{r,q}(x)|^2\,d\mu_t^{r,q}(x)&=2\int_{\R^{dm}} \nabla \partial_i \log \mu_t^{r,q}(x) \cdot \partial_i f_t^{r,q}(x)\,d\mu_t^{r,q}(x)-2\sigma^2 \int_{\R^{dm}} |\partial_i \nabla \log \mu_t^{r,q}(x)|^2\,d\mu_t^{r,q}(x)\\
    &\leq 2\int_{\R^{dm}}\left(\frac{1}{2\zeta} |\partial_i f_t^{r,q}(x)|^2+\left(\frac{\zeta}{2}-\sigma^2\right) |\partial_i \nabla \log \mu_t^{r,q}(x)|^2\,d\mu_t^{r,q}(x)\right) \,d\mu_t^{r,q}(x),
\end{align*}

by using Young's inequality in the last line. Set $\zeta$ small enough to find that for some positive constants $C_1,C_2$ that do not depend on the mollification parameters,
\begin{align*}
\begin{split}
\int_{\R^{dm}}|\partial_i \log \mu_t^{r,q}(x)|^2\,d\mu_t^{r,q}(x) +  &C_1\int_\ell^T \int_{\R^{dm}} |\partial_i \nabla \log \mu_t^{r,q}(x)|^2\,d\mu_t^{r,q}(x)\,dt\\
&\leq \int_{\R^{dm}} |\partial_i \log \mu_\ell^{r,q}(x)|^2\,d\mu_\ell^{r,q}(x) +C_2 \int_\ell^T \int_{\R^{dm}} |\partial_i f_t^{r,q}(x)|^2\,d\mu_t^{r,q}(x)\,dt. 
\end{split}
\end{align*}
The integral of $\partial_i f_t^{r,q}$ can be upper bounded independently of $\ell, q,r$ by Lemmas \ref{lm: properties of mollified densities} (6) and \ref{Lm: derivatives of CE}(2), and we thus replace it by a constant. We obtain that for some $C$ independent of the mollification parameters
    \begin{align}
\label{eq: removing mollification q}
\begin{split}
\int_{\R^{dm}}|\partial_i \log \mu_t^{r,q}(x)|^2\,d\mu_t^{r,q}(x) +  &C_1\int_\ell^T \int_{\R^{dm}} |\partial_i \nabla \log \mu_t^{r,q}(x)|^2\,d\mu_t^{r,q}(x)\,dt\\
&\leq \int_{\R^{dm}} |\partial_i \log \mu_\ell^{r,q}(x)|^2\,d\mu_\ell^{r,q}(x) +C_2. 
\end{split}
\end{align}
We now take the limit as $q\to 0$, beginning from the right hand side of \eqref{eq: removing mollification q}. By Lemma \ref{lm: properties of mollified densities}(1) $\partial_i \log \mu_\ell^{r,q}$ is bounded uniformly in $q$; moreover by Lemma \ref{lm: properties of mollified densities}(3) $\mu_\ell^{r,q}(x)\to \mu_\ell^r(x)$ for every $x$ and $\partial_i \log \mu_\ell^{r,q}(x)\to \partial_i\log \mu_\ell^r(x)$ again everywhere. We conclude using the Dominated Convergence Theorem that as $q \downarrow 0$,
\begin{align*}
    &\lim _{q \to 0} \bigg\vert \int_{\R^{dm}}|\partial_i \log \mu_\ell^{r,q} (x)|^2\,d\mu^{r,q}_\ell(x)- \int_{\R^{dm}}|\partial_i \log \mu_\ell^r (x)|^2\,d\mu^r_\ell(x)\bigg\vert\\
    &\leq \lim _{q \to 0} \bigg\vert\int_{\R^{dm}}\left(|\partial_i \log \mu_\ell^{r,q} (x)|^2-|\partial_i \log \mu_\ell^r(x)|^2 \right)\,d\mu^r_\ell(x)\bigg\vert+\left\vert\int_{\R^{dm}} |\partial_i \log \mu_\ell^{r,q} (x)|^2(\mu_\ell^{r,q}(x)-\,d\mu_\ell^r(x))\right\vert\\
    &\leq 0 +C \lim_{q\to 0} \|\mu^{r,q}_\ell-\mu^r_\ell\|_{L^1 (\R^d)}=0.  
\end{align*}
To see convergence in total variation of $\mu_\ell^{r,q} $ to $\mu_\ell^r$ we can use Scheffé's Lemma since $\mu_\ell^{r,q}\to \mu_\ell^{r}$ almost everywhere by continuity. Since by Lemma \ref{lm: properties of mollified densities}(3), $t\mapsto \partial_i\partial_j \mu_t^{r}$ is continuous, we know that $\partial_{i}\partial_j \mu_t^{r,q}$ converges to $\partial_{i}\partial_j \mu_t^{r,q}$ as $q\to0$ and thus we can use Fatou's lemma on the left hand side of \eqref{eq: removing mollification q}. Since we are imposing $\ell\geq q>0$, we get for every $\ell >0,$
\begin{align*} \int_{\R^{dm}}|\partial_i \log \mu^r_T(x)|^2\,d\mu_T^r(x)&+ C_1\int_\ell^T\int_{\R^{dm}} |\partial_i \nabla \log \mu_t^r (x)|^2\,d\mu_t^r(x)\,dt
    \\
    &\leq\int_{\R^{dm}}|\partial_i \log \mu_\ell^r (x)|^2\,d\mu_\ell^r(x) + C_2.
\end{align*}
We now push $\ell \downarrow 0$. The limit on the left hand side follows from the Monotone Convergence Theorem. On the right hand side, we have that $\mu_\ell^r\to \mu_0^r$ everywhere by continuity; the same applies to $\partial_i \mu_t^r (X_t)$ by Lemma \ref{lm: properties of mollified densities}(3). Since $|\partial_i \log \mu_\ell^r|$ is bounded uniformly in $\ell$ by Lemma \ref{lm: properties of mollified densities}(1) and $\mu_\ell^r\to \mu_0^r$ in Total Variation by Lemma \ref{lm: properties of mollified densities}(3) and Scheffé's Lemma, we can repeat the procedure used to take $q\to 0$ to get $\int |\partial_i \log \mu_\ell^r|^2\,d\mu_\ell^r\to \int |\partial_i \log \mu_0^r|^2\,d\mu_0^r$; we obtain
\begin{align*}
\label{eq: bounds on second derivatives r}
\begin{split}
\int_{\R^{dm}}|\partial_i \log \mu^r_T(x)|^2\,d\mu_T^r(x)&+ C_1\int_0^T\int_{\R^{dm}} |\partial_i \nabla \log \mu_t^r (x)|^2\,d\mu_t^r(x)\,dt
    \\
    &\leq\int_{\R^{dm}}|\partial_i \log \mu_0^r (x)|^2\,d\mu_0^r(x) + C_2.
\end{split}
\end{align*}

We can use Lemma \ref{lm: properties of mollified densities}(4) to control the right hand side, since by assumption \ref{assumption: initial regularity}(3) $\mu_0 \in W^{1,1}(\R^d)$; we obtain $
    \int |\partial_i \log \mu_0^r|^2\,d\mu_0^r\leq \int|\partial_i \log \mu_0|^2\,d\mu_0<\infty.$ Now the right hand side is bounded uniformly in $r$; since $i$ is arbitrary we conclude
\begin{equation}
\label{eq: second moments second derivatives r}
    \sup_{r \leq 1} \int_0^T \int_{\R^{dm}} \|\nabla^2 \log \mu^r_t(x)\|^2_{\textup{Fr}}\,d\mu_t^r(x)\,dt <\infty.
\end{equation}
Integrating by parts and using Young's inequality we can check that 
\begin{align*}
    \int_{\R^{dm}} |\partial_i \log \mu^r_t(x)|^4\,d\mu_t^r(x)\,&= -3\int_{\R^{dm}} |\partial_i \log \mu_t^r(x)|^2 \partial^2_i \log \mu^r_t(x)\,d\mu_t^r(x)\\
    &\leq \frac{3\zeta}{2}\int_{\R^{dm}} |\partial_i \log \mu^r_t(x)|^4 \,d\mu_t^r(x)+\frac{3}{2\zeta}\int_{\R^{dm}} |\partial_i^2 \log \mu^r_t(x)|^2\,d\mu_t^r(x) 
\end{align*}
Taking $\zeta$ small enough and integrating in time we find
\begin{align}
\label{eq: bound on fourth derivatives}
    \sup_{r \leq 1}\int_0^T\int_{\R^{dm}} |\partial_i \log \mu^r_t(x)|^4 \,d\mu_t^r(x)\,dt\leq C \sup_{r \leq 1 }\int_0^T\int_{\R^{dm}} |\partial^2_i \log \mu^r_t(x)|\,d\mu_t^r(x)\,dt<\infty.
\end{align}
The estimate holds for all $i$, and we obtain
\begin{align*}
    \sup_{r\leq 1}\int_0^T \int_{\R^{dm}} &\|\nabla^2 \mu_t^r(x)\|^2_{\textup Fr}\,dx\,dt\\
    &\leq 4\|\mu\|_{L^\infty(\R^{dm}\times(0,T))}\sup_{r\leq 1}\int_0^T \int_{\R^{dm}}\left( \left\|\nabla^2 \log\mu_t^r(x)\right\|_{\text{Fr}}^2+|\nabla \log \mu_t^r(x)|^4 \right)\,d\mu_t^r(x)\,dt<\infty,
\end{align*}
(notice that we are using $\|\mu^r\|_{L^\infty(\R^d\times[0,T])}\leq \|\mu\|_{L^\infty(\R^d\times[0,T])}<\infty$ and Lemma \ref{Lm: derivatives of CE}(3)) which in turn implies that  for almost every $t>0$, $\mu_t \in H^2(\R^d)$ by a common weak compactness argument. As a result we obtain $\log\mu_t \in H_{\text{loc}}^2(\R^d)$ for almost every $t>0$ since $\mu_t$ is bounded away from zero on compact sets.
Now we can use Fatou's lemma to let $r\to 0$ and find the required bounds from \eqref{eq: bound on fourth derivatives} and \eqref{eq: second moments second derivatives r}.
\end{proof}

A key point is that the above estimates generate more regularity for equation \eqref{eq: main SDE}. The fact that $\mu_t \in H^2(\R^d)$ for almost every $t$ implies that $f_t$ also has two weak derivatives; our control on $\E[|\partial_i\partial_j \log \mu_t(X_t)|^2]$ allows us to control the moments of $\partial_i\partial_j f_t( X_t)$, at least when integrating in time.

\begin{proposition}[Bootstrapping]
\label{pr: second derivatives of drift}
     For almost every $t>0$, $f_t \in H^2_{\textup{loc}}(\R^d)$ and for each $i$,
        \begin{equation*}
            \int_0^T \int_{\R^{dm}}|\partial^2_i f_t(x)|^2\,d\mu_t(x)\,dt\leq C<\infty.
        \end{equation*}
\end{proposition}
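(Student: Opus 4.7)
The plan is to iterate the computation carried out in Lemma \ref{Lm: derivatives of CE}(4), now taking advantage of the second order regularity of $\mu_t$ that has just been established in Proposition \ref{pr: second derivatives square moments}. First I would decompose $f_t^i(x) = V^i(x) + \sum_{j=1}^m g_{i,j}(x^j)$ where $g_{i,j}(x^j) := \E[b_j^i(X_t)\mid X_t^j=x^j]$. Since $V^i$ is assumed to be $C^2$ with bounded derivatives, the statement reduces to showing that each $g_{i,j}$ has two weak derivatives in $x^j$ and that
\[
\int_0^T\int_{\R^{dm}} |\partial_k^2 g_{i,j}(x^j)|^2\,d\mu_t(x)\,dt<\infty
\]
for every $k$ in the $j$-th block. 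By Proposition \ref{pr: second derivatives square moments} we know that for a.e.\ $t\in(0,T)$, $\mu_t\in H^2(\R^{dm})$, $\log\mu_t\in H^2_{\text{loc}}(\R^{dm})$, and the control
\[
\int_0^T\int_{\R^{dm}}\bigl(|\nabla^2\log\mu_t|^2+|\nabla\log\mu_t|^4\bigr)\,d\mu_t\,dt<\infty
\]
holds; additionally, $\mu_t^j$ is locally bounded away from zero by Lemma \ref{Lm: derivatives of CE}(1), so $\log\mu_t^j\in H^2_{\text{loc}}(\R^d)$ as well.

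The first-derivative identity from Lemma \ref{Lm: derivatives of CE}(4) reads
\[
\partial_k g_{i,j}(x^j)=\E[\partial_k b_j^i(X_t)\mid X_t^j=x^j]+\E[b_j^i(X_t)\bigl(\partial_k\log\mu_t(X_t)-\partial_k\log\mu_t^j(X_t^j)\bigr)\mid X_t^j=x^j].
\]
I would differentiate this expression once more in $x^j$. Each factor is now in an appropriate local Sobolev space ($b$ and its derivatives are in $C^1_b$; $\log\mu_t$, $\log\mu_t^j$ are in $H^2_{\text{loc}}$; $\mu_t$, $\mu_t^j$ admit bounded reciprocals on compacts), so the product rule and chain rule for weak derivatives apply, and differentiation under the integral sign is justified exactly as in Lemma \ref{Lm: derivatives of CE}(4) via Lemma \ref{lm: derivative under integral}, using the $L^2(\mu_t)$ control on $\nabla\log\mu_t$ together with the newly obtained $L^2(\mu_t)$ control on $\nabla^2\log\mu_t$ and $L^4(\mu_t)$ control on $\nabla\log\mu_t$. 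The resulting formula for $\partial_l\partial_k g_{i,j}$ is schematically a finite sum of terms of the form $\E[A_1(X_t)\mid X_t^j=x^j]$ and $\bigl(\partial^\alpha\log\mu_t^j(x^j)\bigr)\E[A_2(X_t)\mid X_t^j=x^j]$, where $|\alpha|\le 2$ and $A_1,A_2$ are products of bounded functions (derivatives of $b_j^i$ up to order two) times either a single factor of $\nabla^2\log\mu_t$, or at most two factors of $\nabla\log\mu_t$.

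For the quantitative bound I would use conditional Jensen: for any $\varphi\in L^2(\mu_t)$,
\[
\int_{\R^d}|\E[\varphi(X_t)\mid X_t^j=x^j]|^2\,d\mu_t^j(x^j)\le\int_{\R^{dm}}|\varphi|^2\,d\mu_t,
\]
and extend this by writing the integral against $d\mu_t$ as an integral against $d\mu_t^j$ of the conditional expectation of the squared term (the conditional-expectation factors are functions of $x^j$ only, so no information is lost). For the marginal-derivative prefactors $\partial^\alpha\log\mu_t^j$ with $|\alpha|\le 2$, I would express them via the tower-type identity $\partial_k\log\mu_t^j(x^j)=\E[\partial_k\log\mu_t(X_t)\mid X_t^j=x^j]$ and its once-differentiated version, which introduces a conditional covariance of $\nabla\log\mu_t$ with itself that is controlled by $\E[|\nabla\log\mu_t(X_t)|^2\mid X_t^j]$ via Cauchy--Schwarz. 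All terms are therefore dominated by conditional expectations whose $L^2(\mu_t)$ norms are bounded by $\int(|\nabla^2\log\mu_t|^2+|\nabla\log\mu_t|^4)\,d\mu_t$ (after using boundedness of $b$ and its derivatives). Integrating in time and invoking Proposition \ref{pr: second derivatives square moments} closes the estimate. The main obstacle, in my view, is the bookkeeping required to rigorously justify the product-and-chain-rule manipulations at the second order for a ratio of two functions both of which are merely in $H^2$, rather than any analytical difficulty per se: once the explicit formula for $\partial_l\partial_k g_{i,j}$ is written down correctly, the $L^2(\mu_t)$ bound follows essentially for free from the regularity of $\mu_t$ built up in the previous proposition.
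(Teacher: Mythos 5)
Your proposal is correct and follows essentially the same route as the paper's proof: differentiate the first-derivative identity from Lemma \ref{Lm: derivatives of CE}(4) once more via the product rule for weak derivatives (justified by the $H^2$ regularity and moment bounds on $\nabla^2\log\mu_t$, $\nabla\log\mu_t$ from Proposition \ref{pr: second derivatives square moments}), then control the resulting conditional-expectation terms by conditional Jensen and Young together with a tower identity for the marginal score. The paper simply makes explicit what your sketch leaves schematic, writing out the full second-derivative formula and the identity $\partial^2_{x_k}\log\mu_t^j(x)=\E[\partial^2_{x_k}\log\mu_t(X_t)\mid X_t^j]+\E[|\partial_{x_k}\log\mu_t(X_t)|^2\mid X_t^j]-|\partial_{x_k}\log\mu_t^j(x)|^2$, which are precisely the ingredients your argument uses.
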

\begin{proof}
  The functions $V^i$ have bounded derivatives.  For the conditional expectations components, we follow the same line of reasoning as Lemma \ref{Lm: derivatives of CE}. Since for almost every $t\geq 0,\mu_t \in H^2(\R^d)$ and 
        \[\int_{\R^{dm}}|\partial_i \log \mu_t^j(x)|^2\,d\mu^j_t(x)+\int_{\R^{dm}}|\partial_i^2 \log \mu_t( x)|^2\,d\mu_t(x)<\infty,\]
        we can pass the derivative under the integral and apply the product rule for weak derivatives. We then obtain
    \begin{align*}
        \partial^2_{x_k}&\left(\frac{1}{\mu_t^j(x^j)}\int b^i_j(x^j,y^{-j})\mu_t(x^j,y^{-j})\,dy\right)\\
        &=-\partial^2_{x_k} \log \mu_t^j(x^j) \E[b^i_j(X_t) \mid X^j_t=x^j]-\partial_{x_k}\log\mu_t^j(x^j)\partial_{x_k}\E[b^i_j(X_t)\mid X^j_t=x^j]\\
        &+\partial_{x_k} \E[\partial_{x_k}b^i_j(X_t) \mid X^j_t=x^j]-\partial_{x_k} \log \mu_t^j(x^j)\E[b^i_j(X_t)\partial_{x_k}\log \mu_t(X_t) \mid X^j_t=x^j]\\
    &+\E[\partial_{x_k}b^i_j(X_t)\partial_{x_k}\log \mu_t(X_t)\mid X^j_t=x^j]+\E[\partial^2_{x_k}\log \mu_t(X_t)b^i_j(X_t)\mid X_t^j=x^j]\\
        &+\E[b^i_j(X_t) |\partial_{x_k} \log \mu_t(X_t)|^2 \mid X^j_t=x^j].
    \end{align*}
    We can notice that $\partial_{x_k}^2 \log \mu_t^j(x)=\E[\partial_{x_k}^2 \log \mu_t(X_t)\mid X^j_t]+\E[|\partial_{x_k} \log \mu_t(X_t)|^2\mid X_t]-|\partial_{x_k}\log\mu_t^j(x)|^2$; the result follows by repeatedly applying Young's and Jensen's inequalities, integrating in time and using Proposition \ref{pr: second derivatives square moments}. Local integrability follows by the fact that $\mu_t$ is locally bounded away from zero.    
\end{proof}

\subsection{Higher regularity and conclusion}
Given the higher regularity just obtained we can keep pushing for control over higher derivatives.

We have now obtained a significant control over the first and second derivatives of $\mu_t$ and $f_t$, which in turn allows us to control uniformly the derivatives of the mollified densities $\mu_t^r,\mu_t^{r,q}$ and the related coefficients. We summarize the properties that we will need going forward in the next lemma, which we prove in Appendix \ref{Appendix: mollification estimates}.
\begin{lemma}
\label{lm: bootstrapping} The following holds.
    \begin{enumerate}
        \item For each $\ell\geq q>0$, 
        \[\int_\ell^T \int_{\R^{dm}}|\partial_i^2 f_t^{r,q}(x)|^2\mu_t^{r,q}(x)\,dx\,dt\leq C<\infty,\]
        where $C$ does not depend on $r,q$ or $\ell$.
        \item For each $\ell\geq q >0$, $r>0$ and every $i$,
        \[\int_\ell^T\int_{\R^{dm}}\left( |\partial_i \log \mu_t^{r,q}(x)|^4+|\partial_i \log \mu_t^{r,q}(x)\partial_i \nabla \log \mu_t^{r,q}(x)|^2\right)\mu_t^{r,q}(x)\,dx\,dt\leq C<\infty\]
        Where the constant $C$ does not depend on the mollification parameters. The same holds with $\mu_t^r$ in place of $\mu_t^{r,q}$.
    \end{enumerate}
\end{lemma}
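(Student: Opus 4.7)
Our proof will mirror Propositions \ref{pr: second derivatives square moments} and \ref{pr: second derivatives of drift}, but we extract uniform bounds at the mollified level before passing to the limit in the parameters $r, q, \ell$. For part (1), starting from the identity $f^{r,q}_t \mu^{r,q}_t = (f_t \mu_t)\star \eta^r \star_t \theta^q$, the chain and product rules express $\partial_i^2 f^{r,q}_t$ as a sum of terms, each a convolution of one of $f_t, \partial_i f_t, \partial_i^2 f_t, \partial_i\log\mu_t$ against $\eta^r$ and $\theta^q$, weighted by powers of $\mu^{r,q}$. A Jensen-type argument analogous to the one used in Lemma \ref{lm: properties of mollified densities}(6) then bounds $\int|\partial_i^2 f_t^{r,q}|^2 \,d\mu^{r,q}_t$ by a constant multiple of $\int(|\partial_i^2 f_t|^2 + |\partial_i f_t|^2|\partial_i\log\mu_t|^2 + |\partial_i \log\mu_t|^4)\,d\mu_t$, and each of the three summands is controlled in $L^1(0,T)$ uniformly in the mollification by Proposition \ref{pr: second derivatives of drift}, by Cauchy--Schwarz combined with Proposition \ref{pr: second derivatives square moments} and Lemma \ref{lm: properties of mollified densities}(6), and by Proposition \ref{pr: second derivatives square moments} alone, respectively.

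For part (2), the bound on $\int_\ell^T \int |\partial_i \log \mu^{r,q}_t|^4\,d\mu^{r,q}_t\,dt$ is immediate from Lemma \ref{lm: properties of mollified densities}(5) with $p=4$ and Proposition \ref{pr: second derivatives square moments}. The cross term requires an energy estimate at the mollified level. Set $u=\log\mu^{r,q}_t$, $f=f^{r,q}_t$ and $w=\partial_i u$, so that $|\partial_i\log\mu^{r,q}_t\cdot\partial_i\nabla\log\mu^{r,q}_t|^2 = w^2|\nabla w|^2$. Differentiating in time, using the PDE $\partial_t u = -\text{div}\,f - \nabla u\cdot f + \sigma^2\Delta u + \sigma^2|\nabla u|^2$, and performing the standard IBP manipulations (the same as in Proposition \ref{pr: second derivatives square moments}: cancelling $w^3\nabla w\cdot f$ against the continuity equation and doing IBP on $4\sigma^2 w^3\Delta w\,\mu^{r,q}_t$) yields the identity
\[\frac{d}{dt}\int w^4\,d\mu^{r,q}_t + 12\sigma^2 \int w^2|\nabla w|^2\,d\mu^{r,q}_t = -4\int w^3\,\partial_i\text{div}\,f\,d\mu^{r,q}_t - 4\int w^3\,\nabla u\cdot\partial_i f\,d\mu^{r,q}_t.\]

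The crucial step is a further integration by parts on the last right-hand side term, using $(\partial_k u)\mu^{r,q}_t = \partial_k \mu^{r,q}_t$ and IBP in each coordinate $k$:
\[-4\int w^3\,\nabla u\cdot\partial_i f\,d\mu^{r,q}_t = 12\sum_k \int w^2\,\partial_k w\cdot\partial_i f_k\,d\mu^{r,q}_t + 4\int w^3\,\partial_i\text{div}\,f\,d\mu^{r,q}_t,\]
and the two $\partial_i\text{div}\,f$ contributions cancel exactly. Young's inequality then bounds the residual $12\sum_k\int w^2\partial_k w\,\partial_i f_k\,d\mu^{r,q}_t$ by $6\sigma^2\int w^2|\nabla w|^2\,d\mu^{r,q}_t + C\int w^2|\partial_i f|^2\,d\mu^{r,q}_t$, and absorbing the first term into the left-hand side leaves a differential inequality whose right-hand side (after one more application of Young) involves only $\int w^4\,d\mu^{r,q}_t$ and $\int |\partial_i f|^4\,d\mu^{r,q}_t$, both of which are uniformly in $L^1(0,T)$ by the $L^4$ part of (2) and Lemma \ref{lm: properties of mollified densities}(6) with $p=4$. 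Integrating from $\ell$ to $T$, controlling the initial datum $\int w^4\,d\mu^{r,q}_\ell$ via Lemma \ref{lm: properties of mollified densities}(4) and Assumption \ref{assumption: initial regularity}(3), and finally passing to the limit $q\to 0$ then $\ell\to 0$ by Fatou and dominated convergence exactly as in the proof of Proposition \ref{pr: second derivatives square moments}, produces the required uniform bound. The main technical obstacle is precisely this IBP cancellation: without it one would need an $L^4$ bound on $\partial_i^2 f$, which lies beyond the regularity available at this stage of the bootstrap.
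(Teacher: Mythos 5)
Your approach to both parts follows the paper's strategy closely — in particular, the energy estimate for $\frac{d}{dt}\int w^4\,d\mu^{r,q}_t$ in part~(2) is exactly the computation the paper performs inside Lemma~\ref{lm: cross moments}, and your intermediate integration-by-parts identity (showing the $\partial_i\mathrm{div}\,f$ contributions cancel) reorganizes the same cancellation the paper describes as ``the terms involving $(\partial_i\log\mu_t^{r,q})^3$ cancel out.'' Your closing observation that without this cancellation one would need an $L^4$ bound on $\partial_i^2 f$ is a nice remark the paper leaves implicit. For part~(1), your reduction to moments of $f_t, \partial_i f_t, \partial_i^2 f_t, \partial_i\log\mu_t$ is right in spirit; you omit the $|\partial_i^2\log\mu_t|^2$ term (which arises from the factor $\partial_i^2\log\mu_t^r\cdot f_t^r$ in the expansion of $\partial_i^2 f_t^r$), but that term is controlled by Proposition~\ref{pr: second derivatives square moments} so nothing is lost.

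There is, however, a genuine gap in your handling of the initial datum in part~(2). You propose to control $\int w^4\,d\mu^{r,q}_\ell$ ``via Lemma~\ref{lm: properties of mollified densities}(4) and Assumption~\ref{assumption: initial regularity}(3)'' and then send $q\to 0$, $\ell\to 0$. But Lemma~\ref{lm: properties of mollified densities}(4) applied at time $\ell$ only reduces the question to $\int|\partial_i\log\mu_\ell|^4\,d\mu_\ell$, and Assumption~\ref{assumption: initial regularity}(3) bounds this only at $\ell=0$; passing $\ell\to 0$ afterward recovers the $\mu^r_t$ statement on $[0,T]$ but \emph{not} the uniform bound for $\mu^{r,q}_t$ at fixed $\ell\ge q>0$, which is what the lemma asserts and what is actually used later (in the third-derivative estimate, which runs at the $\mu^{r,q}$ level). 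The paper closes this loop with a bootstrap: first run the energy estimate, send $q,\ell\to 0$, and use Fatou to deduce $\sup_{t\le T}\E_\mu[|\nabla\log\mu_t|^4]<\infty$; only then feed this $\sup$ bound back into the Jensen-type argument of Lemma~\ref{lm: properties of mollified densities}(5) to bound $\int|\partial_i\log\mu^{r,q}_\ell|^4\,d\mu^{r,q}_\ell$ uniformly in $\ell,q,r$, which finally yields the $\mu^{r,q}_t$ estimate from \eqref{eq: control over fourth moment first derivative for mollification} without any further limit. Your proposal is missing this second pass; as written it proves only the $\mu^r_t$ half of the claim.
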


We still need more integrability from the second derivatives. We choose to obtain it by showing that the density has well behaved third (weak) derivatives.

\begin{proposition}
    For every $T>0$ and $i$,
\begin{align}
\label{eq: bound on third derivatives}
\sup_{r>0}\,\int_0^T \int_{\R^{dm}}|\partial_i^2\nabla \log \mu^r_t(x)|^2\,d\mu_t^r(x)\,d t<\infty.
\begin{split}
\end{split}
    \end{align}
\end{proposition}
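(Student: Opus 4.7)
The plan is to mirror the strategy used in Proposition \ref{pr: second derivatives square moments} one derivative higher. Working with the smooth mollified density $\mu_t^{r,q}$, I would perform an energy estimate on $\int |\partial_i^2 \log \mu_t^{r,q}|^2\,d\mu_t^{r,q}$, whose time derivative, after integration by parts, produces $-2\sigma^2 \int |\partial_i^2 \nabla \log \mu_t^{r,q}|^2\,d\mu_t^{r,q}$ as a good dissipation term. Uniform (in $r,q,\ell$) control of all other contributions then yields the bound, and passing to the limits $q\downarrow 0$, $\ell\downarrow 0$, $r\downarrow 0$ in the same way as in the proof of Proposition \ref{pr: second derivatives square moments} (dominated convergence using Lemma \ref{lm: properties of mollified densities}(1) and (3), Scheff\'e's lemma, and Fatou) transfers the estimate to $\mu_t$.

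Concretely, I would differentiate \eqref{eq: HJB PDE first derivative} once more in the $i$ direction to obtain the PDE satisfied by $\partial_i^2\log \mu_t^{r,q}$, which contains the drift contributions $-\partial_i^2\mathrm{div}\,f_t^{r,q}$, $-\partial_i^2\nabla\log\mu_t^{r,q}\cdot f_t^{r,q}$, $-2\partial_i\nabla\log\mu_t^{r,q}\cdot \partial_i f_t^{r,q}$, $-\nabla\log\mu_t^{r,q}\cdot\partial_i^2 f_t^{r,q}$, the diffusion $\sigma^2\Delta\partial_i^2\log\mu_t^{r,q}$, and the Bochner-type quadratic terms $2\sigma^2|\partial_i\nabla\log\mu_t^{r,q}|^2 + 2\sigma^2\nabla\log\mu_t^{r,q}\cdot\partial_i^2\nabla\log\mu_t^{r,q}$. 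Multiplying by $2\partial_i^2\log\mu_t^{r,q}$, combining with the transport--diffusion contribution from $\partial_t\mu_t^{r,q}$, and integrating by parts (legal thanks to the uniform smoothness and boundedness of the mollified objects from Lemma \ref{lm: properties of mollified densities}), the $f_t^{r,q}$ transport cancels the term $\partial_i^2\nabla\log\mu_t^{r,q}\cdot f_t^{r,q}$, the mixed diffusive term $\nabla\log\mu_t^{r,q}\cdot\partial_i^2\nabla\log\mu_t^{r,q}$ cancels, and the Laplacian produces the desired good term.

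The hard part will be taming the residual ``bad'' contributions
\[
\int\partial_i^2\log\mu_t^{r,q}\,\partial_i^2\mathrm{div}\,f_t^{r,q}\,d\mu_t^{r,q},\quad \int\partial_i^2\log\mu_t^{r,q}\,\partial_i\nabla\log\mu_t^{r,q}\cdot\partial_i f_t^{r,q}\,d\mu_t^{r,q},
\]
\[
\int\partial_i^2\log\mu_t^{r,q}\,\nabla\log\mu_t^{r,q}\cdot\partial_i^2 f_t^{r,q}\,d\mu_t^{r,q},\quad \int\partial_i^2\log\mu_t^{r,q}\,|\partial_i\nabla\log\mu_t^{r,q}|^2\,d\mu_t^{r,q}.
\]
The first term is controlled by Young's inequality against $\int|\partial_i^2\log|^2 d\mu$ (Proposition \ref{pr: second derivatives square moments}) and $\int|\partial_i^2 f^{r,q}|^2 d\mu^{r,q}$ (Lemma \ref{lm: bootstrapping}(1)). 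The third requires splitting $|abc|\leq \tfrac{\zeta}{2}a^2 + \tfrac{1}{2\zeta}b^2c^2$ and then Cauchy--Schwarz, reducing to the $L^4$ bound on $\partial_i\log\mu_t^{r,q}$ from Lemma \ref{lm: bootstrapping}(2) paired with the $L^2$ bound on $\partial_i^2 f^{r,q}$. The second and fourth terms are the genuine obstacle: they are cubic in the derivatives of $\log\mu_t^{r,q}$ and do not fit an $L^2$ template. The key is that Lemma \ref{lm: bootstrapping}(2) provides precisely the time-integrated bound on $|\partial_i\log\mu_t^{r,q}\cdot\partial_i\nabla\log\mu_t^{r,q}|^2$, which is exactly the packaged fourth-order interaction that Young's inequality produces when splitting the triple product so as to isolate $|\partial_i f^{r,q}|^2$ on one side; analogously, the fourth term is handled by writing $\partial_i^2\log\cdot|\partial_i\nabla\log|^2 = (\partial_i^2\log)\cdot(\partial_i\nabla\log)\cdot(\partial_i\nabla\log)$ and performing one integration by parts on one factor of $\partial_i\nabla\log$, producing only quantities controlled by Proposition \ref{pr: second derivatives square moments}, Lemma \ref{lm: bootstrapping}, and an absorbable fraction of the good term.

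Combining, I obtain
\[
\int|\partial_i^2\log\mu_T^{r,q}|^2\,d\mu_T^{r,q} + c_1\int_\ell^T\!\!\int|\partial_i^2\nabla\log\mu_t^{r,q}|^2\,d\mu_t^{r,q}\,dt \leq \int|\partial_i^2\log\mu_\ell^{r,q}|^2\,d\mu_\ell^{r,q} + C,
\]
with $C$ independent of $r,q,\ell$. Sending $q\downarrow 0$ by dominated convergence (as in Proposition \ref{pr: second derivatives square moments}, using Lemma \ref{lm: properties of mollified densities}), then $\ell\downarrow 0$ using continuity in time of $\mu_t^r$ and its derivatives together with Assumption \ref{assumption: initial regularity}(3), and finally Fatou in $r$ yields \eqref{eq: bound on third derivatives}. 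The crux is that Lemma \ref{lm: bootstrapping}(2) is exactly tuned to absorb the quartic-in-gradient cross terms generated by differentiating the nonlinear HJB-type equation twice; without its $\int|\partial_i\log\cdot \partial_i\nabla\log|^2$ control, the bootstrap would not close.
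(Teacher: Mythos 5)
Your high-level plan coincides with the paper's: energy estimate on $\int |\partial_i^2 \log \mu_t^{r,q}|^2\,d\mu_t^{r,q}$, extract the dissipation $-\sigma^2\int |\partial_i^2\nabla\log\mu_t^{r,q}|^2\,d\mu_t^{r,q}$, absorb everything else with Young's inequality against Lemma \ref{lm: bootstrapping} and Proposition \ref{pr: second derivatives square moments}, and pass to the limits $q,\ell,r\downarrow 0$ as before. However, the way you propose to handle the cross terms does not close, and the gap is exactly in the integration-by-parts structure.

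\textbf{The $\partial_i^2\mathrm{div}\,f$ term cannot be estimated directly.} You propose Young's inequality on $\int \partial_i^2\log\mu\,\partial_i^2\mathrm{div}\,f\,d\mu$ against $\int|\partial_i^2\log\mu|^2\,d\mu$ and $\int|\partial_i^2 f|^2\,d\mu$. But a direct Young gives $\int|\partial_i^2\mathrm{div}\,f|^2\,d\mu$, a \emph{third} derivative of $f$, which is not controlled anywhere (Lemma \ref{lm: bootstrapping}(1) only bounds $\partial_i^2 f^{r,q}$). You must integrate by parts the $\mathrm{div}$ first; when you do, the boundary term $-\int\partial_i^2\log\mu\,(\nabla\log\mu\cdot\partial_i^2 f)\,d\mu$ produced by IBP \emph{exactly cancels} the third term on your list $\int\partial_i^2\log\mu\,\nabla\log\mu\cdot\partial_i^2 f\,d\mu$, leaving the clean expression $\int\partial_i^2\nabla\log\mu\cdot\partial_i^2 f\,d\mu$, which Young's inequality pairs against a small fraction of the dissipation term and $\int|\partial_i^2 f|^2\,d\mu$. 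By treating these two integrals separately, you lose this cancellation, and your attempt to handle the third term alone by Cauchy--Schwarz then requires an $L^4$ bound on $\partial_i^2 f^{r,q}$, which is not available.

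\textbf{The cubic term $\int\partial_i^2\log\mu\,\partial_i\nabla\log\mu\cdot\partial_i f\,d\mu$ needs an extra IBP before Young.} You claim that Young isolating $|\partial_i f|^2$ produces $|\partial_i\log\mu\cdot\partial_i\nabla\log\mu|^2$ (which Lemma \ref{lm: bootstrapping}(2) controls), but as written it actually produces $|\partial_i^2\log\mu\cdot\partial_i\nabla\log\mu|^2$ — a second derivative times a second derivative, one order too high. The remedy, which is what the paper does, is to first integrate by parts $\partial_i^2\log\mu = \partial_i(\partial_i\log\mu)$ to lower the order of that factor; only then do the quantities fit the template of Lemma \ref{lm: bootstrapping}(2). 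The same device is used for the Bochner term $\int\partial_i^2\log\mu\,|\partial_i\nabla\log\mu|^2\,d\mu$: the paper integrates by parts the $\partial_i^2\log\mu$ (not a factor of $\partial_i\nabla\log\mu$, as you suggest — the latter route produces a $\partial_i\Delta\log\mu$ which is not aligned with the dissipation $|\partial_i^2\nabla\log\mu|^2$). In summary, the skeleton is right, and Lemma \ref{lm: bootstrapping}(2) is indeed the key input, but every cubic term must first be massaged by an IBP on $\partial_i^2\log\mu$ or on the divergence to reach a form where Young's inequality yields controlled quantities; as stated, your estimates either hit uncontrolled third derivatives of $f$ or fourth-order interactions one degree too singular.
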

\begin{proof}
    We compute 
    \begin{align*}
        \partial^2_i \partial_t \log \mu_t^{r,q}&=-\partial^2_i\text{div} f^{r,q}_t - \partial^2_i\nabla \log \mu^{r,q}_t \cdot f^{r,q}_t-2\partial_i \nabla \log \mu^{r,q}_t \cdot \partial_i f^{r,q}_t-\nabla \log \mu^{r,q}_t \cdot \partial_i^2 f^{r,q}_t \\
        &+\sigma^2 \partial^2_i\Delta \log \mu^{r,q}_t + 2\sigma^2 \partial^2_i \nabla \log \mu^{r,q}_t\cdot \nabla \log \mu_t^{r,q}+2\sigma^2|\partial_i \nabla \log \mu^{r,q}_t|^2.
    \end{align*}
    We use again Leibniz's rule to pass the derivative under the integral and find
 \begin{align*}
        \frac{d}{dt}&\int_{\R^{dm}} |\partial_i ^2 \log \mu_t^{r,q}(x)|^2\,d\mu_t^{r,q}(x)=-\int_{\R^{dm}} |\partial_i ^2 \log \mu_t^{r,q}(x)|^2\text{div}\left(\mu_t^{r,q}(x) ( f_t^{r,q}(x)-\sigma^2\nabla \log \mu_t^{r,q}(x))\right)\,dx\\
        &-2\int_{\R^{dm}} \partial_i^2\log\mu_t^{r,q}(x) \left(\partial_i^2 \text{div}f_t^{r,q}(x)+\partial_i^2\nabla\log\mu_t^{r,q}(x)\cdot f_t^{r,q}(x)\right)\,d\mu_t^{r,q}(x)\\
        &-2\int_{\R^{dm}} \partial_i^2\log\mu_t^{r,q}(x)\left(2\partial_i\nabla\log\mu_t^{r,q}(x)\partial_if_t^{r,q}(x)+\nabla \log\mu_t^{r,q}(x)\cdot \partial^2_i f_t^{r,q}(x)\right)\,d\mu_t^{r,q}(x)\\
        &+2\sigma^2 \int_{\R^{dm}} \partial_i^2 \log \mu_t^{r,q}(x) (\partial_i \Delta\log\mu_t^{r,q}(x)+2\partial_i^2\nabla \log \mu_t^{r,q}(x)\cdot\nabla\log\mu^{r,q}_t(x)+2|\partial_i\nabla\log\mu_t^{r,q}(x)|^2)\,d\mu_t^{r,q}(x).
    \end{align*}
Integrate by parts the first divergence and cancel out the resulting terms to obtain
    \begin{align*}
        \frac{d}{dt}&\int_{\R^{dm}} |\partial_i ^2 \log \mu_t^{r,q}(x)|^2\,d\mu_t^{r,q}(x)=-2\int_{\R^{dm}} \partial_i^2\log\mu_t^{r,q}(x) \left(\partial_i^2 \text{div}f_t^{r,q}(x)+\nabla \log\mu_t^{r,q}(x)\cdot \partial^2_i f_t^{r,q}(x)\right)\,d\mu_t^{r,q}(x)\\
        &-4\int_{\R^{dm}} \partial_i^2\log\mu_t^{r,q}(x)\partial_i\nabla\log\mu_t^{r,q}(x)\cdot\partial_i f_t^{r,q}(x)\,d\mu_t^{r,q}(x)\\
        &+2\sigma^2 \int_{\R^{dm}} \partial_i^2 \log \mu_t^{r,q}(x) (\partial_i \Delta\log\mu_t^{r,q}(x)+\partial_i^2\nabla \log \mu_t^{r,q}(x)\cdot\nabla\log\mu^{r,q}_t(x))\,d\mu_t^{r,q}(x)\\
&+4\sigma^2\int_{\R^{dm}}\partial_i \log\mu_t^{r,q}(x)|\partial_i\nabla\log\mu_t^{r,q}(x)|^2\,d\mu_t^{r,q}(x)\\
&=\text{Line}_1+\text{Line}_2+\text{Line}_3+\text{Line}_4.
\end{align*}
    We perform various integration by parts line by line. We start from Line$_1$: integrate by parts the $\text{div} f_t^{r,q}$ term and use Young's inequality
    \begin{align}
    \label{eq: line 1}
    \begin{split}
        \text{Line}_1&=
        -\int_{\R^{dm}} \partial_i^2 \log \mu_t^{r,q}(x)(\partial^2_i\text{div} f^{r,q}_t(x)+\nabla \log \mu^{r,q}_t(x) \cdot \partial^2_i f^{r,q}_t(x))\,d\mu_t^{r,q}(x)\\
        &=\int_{\R^{dm}} \partial^2_i \nabla \log \mu_t^{r,q}(x) \cdot \partial_i^2 f_t^{r,q}(x)\,d\mu_t^{r,q}(x)\\
        &\leq \frac \zeta 2\int_{\R^{dm}} |\partial^2_i\nabla \log \mu_t^{r,q}(x)|^2\,d\mu_t^{r,q}(x)+\frac{1}{2\zeta}\int_{\R^{dm}} |\partial_i^2 f_t^{r,q}(x)|^2\,d\mu_t^{r,q}(x).
    \end{split}
        \end{align}
    Integrate by parts the $\partial_i^2 \log\mu_t^{r,q}$ to see
    \begin{align*}
     \text{Line}_2&=-\int_{\R^{dm}} \partial_i^2 \log \mu_t^{r,q}(x) \partial_i \nabla \log \mu_t(x)\cdot \partial_i f_t^{r,q}(x)\,d\mu_t^{r,q}(x)\\
        &=\int_{\R^{dm}} \partial_i \log \mu_t^{r,q}(x)\partial^2_i \nabla  \log \mu_t^{r,q}(x) \cdot \partial_i f_t^{r,q}(x)+\partial_i \log \mu_t^{r,q}(x) \partial_i \nabla \log \mu_t^{r,q}(x) \cdot  \partial^2_i f_t^{r,q}(x)\,d\mu_t^{r,q}(x)\\
        &+\int_{\R^{dm}} \partial_i \log \mu_t^{r,q}(x) \partial_i \nabla \log \mu_t^{r,q}(x) \cdot \partial_i f_t^{r,q}(x) \partial_i \log\mu_t^{r,q}(x) \,d\mu_t^{r,q}(x).
    \end{align*}
    We can now use Young's inequality repeatedly to obtain
    \begin{align}
    \label{eq: line 2}
    \begin{split}
        \text{Line}_2&\leq \int_{\R^{dm}} \left(\frac{\zeta}{2}|\partial_i^2\nabla \log \mu_t^{r,q}(x)|^2 + \frac{1}{4\zeta} |\partial_i \log \mu_t^{r,q}(x)|^4 + \frac{1}{4\zeta} |\partial_i f_t^{r,q}(x)|^4 \right)\,d\mu_t^{r,q}(x)\\
        &+\frac 12 \int_{\R^{dm}} |\partial_i^2 f_t^{r,q}(x)|^2 + |\partial_i \log \mu_t^{r,q}(x)|^2 |\partial_i \nabla \log \mu_t^{r,q}(x)|^2\,d\mu_t^{r,q}(x)\\
        &+\frac 12 \int_{\R^{dm}}|\partial_i \log \mu_t^{r,q}(x) |^2 |\partial_i \nabla \log \mu_t^{r,q}(x)|^2 +\frac 12 |\partial_i f_t^{r,q}(x)|^4+\frac 12|\partial_i \log \mu_t^{r,q}(x)|^4\,d\mu_t^{r,q}(x)
    \end{split}
    \end{align}
     In the same way, integrate by parts the $\Delta\log\mu_t^{r,q}$ term to get
        \begin{align}
        \label{eq: line 3}
        \begin{split}
            \text{Line}_3=-\sigma^2 \int_{\R^{dm}} |\partial^2_i \nabla \log \mu_t^{r,q}(x)|^2\,d\mu_t^{r,q}(x).
    \end{split}
    \end{align}
    By integrating by parts $\partial_i^2\log \mu_t^{r,q}$ we find
    \begin{align}
    \label{eq: line 4}
    \begin{split}
\text{Line}_4&= 2\sigma^2\int_{\R^{dm}}\partial_i^2 \log \mu_t^{r,q}(x)|\partial_i \nabla \log \mu^{r,q}_t(x)|^2 \,d\mu_t^{r,q}(x)\\
        &=-2\sigma^2 \int_{\R^{dm}} 2\partial_i \log \mu_t^{r,q}(x) \partial^2_i \nabla \log \mu_t^{r,q}(x)\cdot \partial_i \nabla\log\mu_t^{r,q}(x)+ |\partial_i \log \mu_t^{r,q}(x)|^2 |\partial_i \nabla \log \mu_t^{r,q}(x)|^2\,d\mu_t^{r,q}(x)\\
        &\leq  4\sigma^2\int_{\R^{dm}} \frac \zeta 2| \partial^2_i \nabla \log \mu_t^{r,q}(x)|^2+\left(\frac{1}{2\zeta}+1\right)|\partial_i \log \mu_t^{r,q}(x)|^2|\partial_i \nabla \log \mu_t^{r,q}(x)|^2\,d\mu_t^{r,q}(x).
    \end{split}
    \end{align}
    Summing up \eqref{eq: line 1},\eqref{eq: line 2},\eqref{eq: line 3} and \eqref{eq: line 4}, integrating in time and taking $\zeta$ to be small enough we get for some constants $C_1,C_2>0$ that depend on $\sigma$ only:
    \begin{align*}
        &\int_{\R^{dm}} |\partial_i^2 \log \mu_t^{r,q}(x)|^2\,d\mu_t^{r,q}(x)+ C_1 \int_\ell^T \int_{\R^{dm}} |\partial^2_i\nabla \log \mu_t^{r,q}(x)|^2\,d\mu_t^{r,q}(x)\,dt \leq \int_{\R^{dm}} |\partial_i^2 \log \mu_\ell^{r,q}(x)|^2\,d\mu_\ell^{r,q}(x)\\
        &+C_2\int_\ell^T\int_{\R^{dm}} |\partial_i^2 f_t^{r,q}(x)|^2+|\partial_i \log \mu_t^{r,q} (x)|^4+|\partial_i f_t^{r,q}(x)|^4+|\partial_i \log \mu_t^{r,q} (x)\partial_i \nabla \log \mu_t^{r,q}(x)|^2\,d\mu_t^{r,q}(x)\,dt.
    \end{align*}
    The second line is bounded uniformly in $\ell,q,r$ by Lemma \ref{lm: bootstrapping}. We can now proceed in the same way as in Proposition \ref{pr: second derivatives square moments} to send $q,\ell \to 0$ and obtain
    \begin{align*}
    \int_{\R^{dm}}|\partial_i^2 \log \mu^r_T(x)|^2\,d\mu_T^r(x)+ C_1\int_0^T \int_{\R^{dm}}|\partial_i^2 \nabla \log \mu^r_t( x)|^2\,d\mu_t^r(x)\,dt\leq C_2+ \int_{\R^{dm}}|\partial_i^2 \log\mu^r_0(x)|^2\,d\mu^r_0(x).
\end{align*}
We check that the right hand side can be bounded indipendently of $r$: using Lemma \ref{lm: properties of mollified densities}(4) and Assumption \ref{assumption: initial regularity}(3)
\begin{align*}
    \int_{\R^{dm}}|\partial_i^2 \log\mu^r_0(x)|^2\,d\mu^r_0(x)&=\int_{\R^{dm}}\left|\frac{\partial_i ^2 \mu_0^r(x)}{\mu_0^r(x)}-|\partial_i \log \mu_0^r(x)|^2\right|^2\mu_0^r(x)\\
    &\leq 2 \int_{R^{dm}}\left|\frac{\partial_i ^2 \mu_0^r(x)}{\mu_0^r(x)}\right|^2+|\partial_i \log \mu_0^r(x)|^4 \,d\mu_0^r(x)\\
    &\leq 2 \int_{\R^{dm}}\left|\frac{\partial^2_i \mu_0(x)}{\mu_0(x)}\right|^2\,d\mu_0(x)+2\int_{\R^{dm}}|\partial_i \log \mu_0(x)|^4\,d\mu_0(x)<\infty
\end{align*}
It follows that for every $i$,
\begin{equation}
\label{eq: control over third derivaives r}\sup_{r>0}\,\int_0^T \int_{\R^{dm}}|\partial_i^2\nabla \log \mu^r_t(x)|^2\,d\mu_t^r(x)\,d t<\infty.
\end{equation}
\end{proof}
By controlling third derivatives of $\log\mu_t^r$, we can obtain control over the third moments of $\nabla^2\mu_t^r/\mu_t^r$, which is what we actually need to close the computations of Section \ref{section: intermediate to McKean--Vlasov}. These calculations are routine and thus relegated to Appendix \ref{Appendix: mollification estimates}.

\begin{proposition}
\label{pr: second derivatives third moments}
For every $T>0$,
\begin{align*}
    \sup_{r > 0 }\int_0^T\int_{\R^{dm}} &\left\|\frac{\nabla^2 \mu_t^r(x)}{\mu_t^r(x)}\right\|_{\textup{Fr}}^3\,d\mu_t^r(x)\,dt<\infty.
\end{align*}
\end{proposition}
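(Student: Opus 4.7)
The plan is to expand the Hessian of the density via the identity
\[\frac{\partial_i\partial_j\mu_t^r}{\mu_t^r}=\partial_i\partial_j\log\mu_t^r+\partial_i\log\mu_t^r\,\partial_j\log\mu_t^r,\]
so that the triangle and power-mean inequalities give
\[\left\|\frac{\nabla^2\mu_t^r}{\mu_t^r}\right\|_{\textup{Fr}}^3\le C\sum_{i,j}|\partial_i\partial_j\log\mu_t^r|^3+C\sum_i|\partial_i\log\mu_t^r|^6.\]
It therefore suffices to prove, uniformly in $r$, that both $\int_0^T\int|\partial_i\partial_j\log\mu_t^r|^3 d\mu_t^r dt$ and $\int_0^T\int|\partial_i\log\mu_t^r|^6 d\mu_t^r dt$ are finite for each $i,j$.

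The sixth-moment term I would handle by imitating the fourth-moment computation of Proposition \ref{pr: second derivatives square moments}. Setting $w=\partial_i\log\mu_t^r$ and integrating by parts via $\partial_i\mu_t^r=w\mu_t^r$ gives $\int w^6 d\mu_t^r=-5\int w^4\partial_i^2\log\mu_t^r d\mu_t^r$. Young's inequality with conjugate exponents $3/2$ and $3$ and a small coefficient then absorbs $\int w^6 d\mu_t^r$ into the left-hand side and yields $\int w^6 d\mu_t^r\le C\int|\partial_i^2\log\mu_t^r|^3 d\mu_t^r$, so this task reduces to the third-moment bound on the logarithmic Hessian.

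The third-moment bound on $V:=\partial_i\partial_j\log\mu_t^r$ is the main step, and the key obstacle is a sign issue: the natural IBP, using $V=\partial_j w_i$ with $w_k=\partial_k\log\mu_t^r$, only controls the signed integral $\int V^3 d\mu_t^r$, whereas we need $\int|V|^3 d\mu_t^r$. My plan is to bypass this via the $C^1$ substitute $\phi(V)=V|V|$, whose derivative is $\phi'(V)=2|V|$ and which satisfies $\phi(V)V=|V|^3$. Integrating by parts,
\[\int|V|^3 d\mu_t^r=\int\phi(V)\partial_j w_i d\mu_t^r=-2\int|V|\,w_i\partial_j V d\mu_t^r-\int V|V|\,w_iw_j d\mu_t^r,\]
and after Cauchy--Schwarz and AM--GM,
\[\int|V|^3 d\mu_t^r\le 2\sqrt{\int V^2 w_i^2 d\mu_t^r}\sqrt{\int(\partial_j V)^2 d\mu_t^r}+\tfrac12\int V^2(w_i^2+w_j^2)d\mu_t^r.\]

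Integrating in time and applying Cauchy--Schwarz in $t$, every term on the right is uniformly bounded in $r$: $\int_0^T\int V^2 w_i^2 d\mu_t^r dt\le \int_0^T\int|\partial_i\log\mu_t^r\cdot\partial_i\nabla\log\mu_t^r|^2 d\mu_t^r dt$ and its $j$-analogue are controlled by Lemma \ref{lm: bootstrapping}(2) (noting $V^2\le|\partial_i\nabla\log\mu_t^r|^2$), while $\int_0^T\int(\partial_j V)^2 d\mu_t^r dt=\int_0^T\int(\partial_i\partial_j^2\log\mu_t^r)^2 d\mu_t^r dt$ is dominated by the third-derivative $L^2$ estimate \eqref{eq: bound on third derivatives}. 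The hard part is precisely the sign issue described above; once one sees that $\phi(V)=V|V|$ is the right $C^1$ regularization of $|V|^3$ for this IBP, the remainder is a routine assembly of the regularity estimates collected in Section \ref{sect: regularity estimates}.
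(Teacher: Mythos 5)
Your proposal is correct and is essentially the paper's own argument, just with slightly more explicit bookkeeping. The paper makes exactly the same reduction to $\int_0^T\int|\partial_i\log\mu_t^r|^6\,d\mu_t^r\,dt$ and $\int_0^T\int|\partial_i\partial_j\log\mu_t^r|^3\,d\mu_t^r\,dt$, handles the sixth moment with the same one-sided integration by parts and Young absorption, and treats the third moment of the logarithmic Hessian by writing $|V|^3 = |V|V\cdot V$ and integrating by parts one factor of $V$ — which is the same $\phi(V)=V|V|$ trick you describe, with $\partial(|V|V)=2|V|\partial V$. The resulting terms ($|\nabla\log\mu_t^r\cdot\partial_i\nabla\log\mu_t^r|^2$ and $|\partial_i^2\nabla\log\mu_t^r|^2$) are then closed by Lemma \ref{lm: bootstrapping}(2) and \eqref{eq: bound on third derivatives} exactly as you propose; whether you use Cauchy--Schwarz in time or just Young's inequality pointwise before integrating is a cosmetic choice. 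The paper also justifies the integration by parts via the uniform boundedness of $\nabla^k\log\mu_t^r$ from Lemma \ref{lm: properties of mollified densities}(1), which you implicitly rely on and should cite when writing this up.
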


We now finally prove Lemma \ref{lm: bound on time integral of relative entropy}; we make use of Propositions \ref{pr: second derivatives third moments} and \ref{pr: boundedness of Renyi Divergence}.
\begin{proof}[Proof of Lemma \ref{lm: bound on time integral of relative entropy}]
    The idea, as usual when dealing with mollification, is to interpolate between $\mu_t$ and $\mu_t \star_j K_h$; this is not straightforward since $\mu_t$ is only weakly differentiable. We thus work with $\mu_t^r$ and $ \mu_t^r \star_j K_h$, where $\mu_t^r$ solve \eqref{eq: PDE r}, and recover the desired bounds in the limit. From now on we will use the shorthand notation $\tilde \mu_t= \mu_t \star_j K_h$, defining $\tilde \mu_t^r$ analogously.\\
    
    Observe that by the data processing inequality and lower semicontinuity of relative entropy we have for every $t$
\begin{equation}
\label{eq: continuity of entropy}
    \limsup_{r\to 0}\ent(\mu_t^r\,\|\,\tilde\mu_t^r)\leq \ent(\mu_t\,\|\,\tilde \mu_t)\leq \liminf_{r \to 0} \ent(\mu_t^r\,\|\,\tilde\mu_t^r);
\end{equation}
Hence for every $t,$ $\ent(\mu^r_t\,\|\,\tilde \mu_t^r)\to \ent(\mu_t\,\|\,\tilde\mu_t)$. We now bound $\ent(\mu_t^r\,\|\,\tilde\mu_t^r)$ and use Fatou's lemma to pass to the limit. Recall the definition of the Chi Square divergence between two probability measures 
\begin{align*}
    \chi^2 (\alpha\,\|\,\beta)= \begin{cases}
        \int |\frac{d\alpha}{d\beta}-1|^2\,d\beta \text{ if }\alpha \ll \beta;\\
        +\infty \text{ otherwise.}
    \end{cases}
\end{align*}
By the classical information theoretic inequality $\ent \leq \chi^2$ (see for example \cite[Theorem 5]{Gibbs2002Review}), we find by interpolation (keeping in mind that $\mu_t^r \in C^\infty$),
\begin{align*}
     \ent(\mu_t^r\,\|\,\tilde\mu_t^r)\leq \chi^2 (\mu_t^r\,\|\,\tilde\mu_t^r)&= \int_{\R^{dm}} \frac{(\mu_t^r(x)-\tilde \mu_t^r(x))^2}{\tilde \mu_t^r(x)}\,dx\\
     &=\int_{\R^{dm}}\frac{1}{\tilde \mu_t^r(x)}\left\vert\int_{\R^{d}} \mu_t^r(x^j,x^{-j})-\mu_t^r(x^j-\sqrt hz,x^{-j})K(z)\,dz\right\vert^2\,dx\\
     &= h\int_{\R^{dm}}\frac{1}{\tilde \mu_t^r}\left\vert\int_{\R^{d}}\int_0^1 \nabla_{x_j}\mu_t^r(x^j-\sqrt h s z,x^{-j})\cdot zK(z)\,ds\,dz\right\vert^2\,dx
\end{align*}
Since $K_h$ is zero mean we can add $ \nabla_{x_j}\mu_t^r(x)\cdot z$ inside the integral and interpolate again to find
\begin{align*}  \ent(\mu_t^r\,\|\,\tilde\mu_t^r)&=h\int_{\R^{dm}}\frac{1}{\tilde \mu_t^r(x)}\left\vert\int_{\R^{d}}\int_0^1 \left(\nabla_{x_j}\mu_t^r(x^j-\sqrt h s z,x^{-j})-\nabla_{x_j} \mu_t(x)\right)\cdot zK(z)\,ds\,dz\right\vert^2\,dx\\
     &=h^2\int_{\R^{dm}}\frac{1}{\tilde \mu_t^r(x)}\left\vert\int_{\R^d}\int_0^1\int_0^1 z^\top \nabla_{x_j}^2  \mu_t^r(x^j-\sqrt h s \ell z,x^{-j})z\,d\ell\,ds\,K(z)\,dz\right\vert^2\,dx
\end{align*}
We now use Jensen's inequality to bring the square inside the inner integral, multiply and divide by $\mu_t(x-\sqrt h sz,y)$ and use Young's inequality to find
\begin{align*}
    &\ent (\mu_t^r\,\|\,\tilde\mu_t^r)\leq h^2\int_{\R^{dm}}\frac{1}{\tilde \mu_t^r(x)}\int_{\R^d}\int_0^1\int_0^1 \left\vert z^\top \nabla_{x_j}^2  \mu_t^r(x^j-\sqrt h s \ell z,x^{-j})z\right\vert^2\,d\ell\,ds\,K(z)\,dz\,dx\\
    & = h^2\int_{\R^{dm}}\int_{\R^d}\int_0^1\int_0^1\frac{\mu_t^r(x^j-\sqrt h s \ell z,x^{-j})}{\tilde \mu_t^r(x)}\\
    &\hspace{5cm}\times \left\vert\frac{ z^\top \nabla_{x_j}^2  \mu_t^r(x^j-\sqrt h s \ell z,x^{-j})z}{\mu_t^r(x^j-\sqrt h s \ell z,x^{-j})}\right\vert^2 \mu_t^r(x^j-\sqrt h s \ell z,x^{-j})\,d\ell\,ds\,K(z)\,dz\,dx\\
    &\leq \frac{h^2}{2} \int_{\R^{md}}\int_{\R^d}\int_{0}^1\int_0^1\left\vert\frac{ z^\top \nabla_{x_j}^2  \mu_t^r(x^j-\sqrt h s \ell z,x^{-j})z}{\mu_t^r(x^j-\sqrt h s \ell z,x^{-j})}\right\vert^3 \mu_t^r(x^j-\sqrt h s \ell z,x^{-j})\,d\ell\,ds\,K(z)\,dz\,dx\\
    &+\frac{h^2}{2} \int_{\R^{md}}\int_{\R^d}\int_{0}^1\int_0^1 \left|\frac{\mu_t^r(x^j-\sqrt h s\ell z,x^{-j})}{\tilde\mu_t^r(x)}\right|^3\mu_t^r(x^j-\sqrt h s z,x^{-j})K(z)\,ds\,dz\,dx\,.
\end{align*}
A change of variable allows us to rewrite the first element of the sum in a clearer way, and we can recognize the inner integral of the second term as $D_4(\mu_t^r \star_j \delta_{\sqrt h s \ell z}\,\|\,\tilde \mu^r_t)$. We obtain  
\begin{align*}
 \ent(\mu_t^r\,\|\,\tilde\mu_t^r)&\leq \frac{h^2}{2} \int_{\R^d}\int_{\R^{dm}}\left|\frac{z^\top \nabla_{x_j}^2 \mu_t^r(x)z}{\mu_t^r(x)}\right|^3\,d\mu_t^r(x)\,dK(z)\\
&+\frac{h^2}{2}\int_{\R^{d}}\int_0^1\int_0^1D_4(\mu_t^r\star_{j}\delta_{\sqrt h s \ell z}\,\|\,\tilde\mu_t^r)\,ds\,d\ell \,dK(z).
\end{align*}
The $D_4$ term can be bounded using Lemma \ref{pr: boundedness of Renyi Divergence}. As for the first term, using Assumption \ref{assumption: Kernel} we get
\begin{align*}
    \frac{h^2}{2} \int_{\R^d}\int_{\R^{dm}}\left|\frac{z^\top \nabla_{x_j}^2 \mu_t^r(x)z}{\mu_t^r(x)}\right|^3\,d\mu_t^r(x)\,dK(z)&\leq C\int_{\R^{dm}}\left\|\frac{\nabla^2_{x_j}\mu^r_t(x)}{\mu^r_t(x)}\right\|_{\textup{Fr}}^3\,d\mu_t^r(x).
\end{align*}
Putting all these bounds together and integrating in time, we obtain
\begin{align*}
    \int_0^T& \ent(\mu_t^r\,\|\,\tilde\mu_t^r)\,dt\leq h^2C\int_0^T\int_{\R^{dm}}\left\|\frac{\nabla^2_{x_j}\mu^r_t(x)}{\mu^r_t(x)}\right\|_{\textup{Fr}}^3\,d\mu_t^r(x)\,dt+h^2\int_0^T Ce^{6tM^2 \slash \sigma^2}\,dt\\
    &\leq C_{T} h^2.
\end{align*}
$C_T$ in the last inequality is a finite constant that does not depend on $r$. Boundedness of the term with second derivatives follows by assumption \ref{assumption: Kernel} and Proposition \ref{pr: second derivatives third moments}. We now use \eqref{eq: continuity of entropy} and Fatou's Lemma to send $h\to0$ and obtain
\begin{equation*}
    \int_0^T \ent(\mu_t\,\|\,\tilde\mu_t)\,dt \leq C_T h^2,
\end{equation*}
as required.
\end{proof}
It is likely that the estimate in Lemma \ref{lm: bound on time integral of relative entropy} is sharp since the typical size of the $L^2$-bias in nonparametric regression is $h$.
\bigskip

\begin{appendix}
\section{Well-Posedness of Conditional McKean--Vlasov equation}
\label{appendix: well posedness}
We prove the well-posedness of \eqref{eq: main SDE}. The result easily follows by techniques already known in the literature for more specialized settings. We consider the following definition of a weak solution, taken from \cite{PLD}
\begin{definition}
    A weak solution consists of a filtered probability space $(\Omega,\mathcal A, \mathcal F,\P)$ supporting a Wiener process $W_t=(W_t^i)_{i =1}^m$ of dimension $dm$, a continuous $\mathcal F$-adapted process $X_t$ and adapted bounded processes $Z_t^{i,j}$ such that
\begin{enumerate}
    \item The SDE holds:
    \begin{align*}
        dX_t^i&= \sum_{j=1}^m Z_t^{i,j}\,dt+V^i(X_t)\,dt+dW_t^i\,;\,i=1,\dots,m,\\
        \text{Law}(X_0)&=\mu.
    \end{align*}
    \item $Z_t^{i,j}=\E[b^i_j (X_t)\mid X_t^j]$ almost surely.
\end{enumerate}
\end{definition}
In what follows we will denote by $\mathcal C_T(\R^{dm})$ the space of continuous functions $f:[0,T]\mapsto \R^{dm}$.
\label{section: well posedness}
\begin{proof}[Proof of Lemma \ref{lm: well posedness MKV}]
    We essentially follow \cite[Proposition 3.8]{PLD} and \cite[Theorem 3.10]{hu2024htheorem}.\\
    We start by proving existence. By classical results for every $\mu_0$ that satisfies Assumption \ref{assumption: initial regularity}, the SDE
    \[dX_t^i= V^i(X_t)dt+\sqrt2\sigma\,dW_t^i\]
    initialized at $\text{Law}(X_0)=\mu_0$ has a unique solution, whose law we denote by $R$. Denote by $\Phi:\mathcal P(\mathcal C_T(\R^{dm}))\to  \mathcal P(\mathcal C_T(\R^{dm}))$ the function that associates to a law $P$ the unique law of the solution of 
    \[dX^i_t= \sum_{j=1}^m \E_P\left[b_j^i(X_t)\,\big \vert\, X_t^j\right]\,dt+V^i(X_t)\,dt+\sqrt 2 \sigma \,dW^i_t.\]
    The function $\Phi$ is well defined since the coefficients are bounded and the diffusion coefficient is constant, meaning that the above SDE has a weak solution that is unique in law for each $P$; clearly fixed points of $\Phi$ must solve \eqref{eq: main SDE}. Define the set $S$ as
    \[S=\left\{P\in \mathcal C_T(\R^{dm})\,\bigg\vert\, \left\|\frac{dP}{dR}\right\|_{L^2(P)}\leq M\right\}\]
    where $M$ is defined as
    \[M=\sup_{P \in \mathcal P(\mathcal C(\R^{dm}))}\left\|\frac{d\Phi(P)}{dR}\right\|_{L^2(\Phi(P))}\leq e^{2T\|b\|^2_\infty}.\] Notice that $S$ is weakly compact and convex and $\Phi(S)\subseteq S$. Define $K=\overline{\text{conv}}(\Phi(S))$ to be the closed convex hull of $\Phi(S)$; we obtain $K\subseteq S$, which implies that $K$ is weakly compact. We need to check that $\Phi$ is weakly continuous. By boundedness of the coefficients, we can use \cite[Corollary 6.4.3]{bogachev2022fokker} to check that every $P \in S$, the time marginals $\Phi(P)_t$ admit a density that is locally H\"older continuous in time and space (with the constants that can be taken to be independent of $P$); moreover, $\Phi(P)_0(0)=\mu_0(0)$ is clearly bounded uniformly in $P$. H\"older continuity and the uniform boundedness at $0$ are preserved by convex combinations, and thus we can take elements of $K$ to have uniformly bounded and equicontinuous densities. \\ 
    
    If we take $Q^n \in K^{\N}$ and assume $Q^n \rightharpoonup Q$, we can use Ascoli-Arzelà to check that $Q^n_t \to Q_t$ uniformly on compact sets of $[0,T]\times \R^{dm}$, and thus in total variation for every $t\in [0,T]$. Now using again standard entropy estimates (see \cite[Section 3]{SharpChaosLacker2023}), we get
    \begin{align*}
        H(\Phi(Q)[T]\,\|\,\Phi(Q^n)[T])&= \frac{1}{4\sigma^2}\int_0^T \sum_{i=1}^m\E_{\Phi(Q)}\big[\big|\sum_{i=1}^m \big(\E_{Q^n}[b_j^i(X_t)\mid X_t^j]-\E_{Q}[b_j^i(X_t)\mid X_t^j]\big)\big|^2\big]\,dt\\
        &\leq C\sum_{i,j}\int_0^T \E_{\Phi(Q)}[|\E_{Q^n}[b_j^i(X_t)\mid X_t^j]-\E_{Q}[b_j^i(X_t)\mid X_t^j]|^2]\,dt
    \end{align*}
    By \cite[Theorem 3.1]{ContinuityOfCE}, since $Q^n_t \to Q_t$ in total variation for every $t$, we know that $\E_{Q^n}[\E_{Q^n}[b_j^i(X_t)\mid X_t^j]\to \E_Q[\E_{Q^n}[b_j^i(X_t)\mid X_t^j]$ in $Q$-probability; since $Q \in K$ and $\Phi(Q)$ are mutually absolutely continuous, convergence is true also in $\Phi(Q)$-probability. Hence, by the Dominated Convergence Theorem, $H(\Phi(Q)[T]\,\|\,\Phi(Q^n)[T])\to 0$ and thus $\Phi(Q^n)\to \Phi(Q)$ in total variation (and in particular, weakly) by Pinsker's inequality. We have thus proved that $\Phi$ is a weakly continuous functional that maps a convex and compact set $K$ into itself. We can thus invoke Schauder's fixed point theorem to prove that $\Phi$ has a fixed point, which must thus be the law of a solution to \eqref{eq: main SDE}. \\

    To prove uniqueness in law, assume $\mu,\nu$ are the laws of two solutions of \eqref{eq: main SDE} and compute the relative entropy $\ent (\mu[T]\,\|\,\nu[T])$. We obtain
    \begin{align*}
        \ent (\mu[T]\,\|\,\nu[T])=\frac{1}{4\sigma^2}\sum_{i=1}^m\int_0^T \E_\mu\bigg[\Big|\sum_{j=1}^m\big(\E_\mu[b_j^i(X_t)\mid X_t^j]-\E_\nu[b_j^i(X_t)\mid X_t^j]\big)\Big|^2\bigg]\,dt
    \end{align*}
    bring the square inside the sum, use a total variation bound and Pinsker's inequality to obtain
    \begin{align*}
        \ent (\mu[T]\,\|\,\nu[T])\leq C\sum_{j=1}^m\int_0^T \E_\mu\Big[H(\mu_t^{X^j}\,\|\,\nu_t^{X^j})\Big]dt
    \end{align*}
    where we denote by $\mu_t^{x^j}$ the conditional density of $\mu_t$ given $X^j=x^j$; use the chain rule of relative entropy and the data processing inequality to obtain
    \begin{align*}
        \ent (\mu[T]\,\|\,\nu[T])\leq C\int_0^T H(\mu_t\,\|\,\nu_t)dt\leq C\int_0^T H(\mu[t]\,\|\,\nu[t])dt.
    \end{align*}    
   We can now invoke Gronwall's inequality to conclude the proof. 
\end{proof}
\section{Mollification Estimates}

\label{Appendix: mollification estimates}

This Section collects proofs of various lemmas and calculations related to the mollified densities $\mu_t^r,\mu_t^{r,q}$ and the related coefficients $f^r,f^{r,q}$ of the previous sections. We begin with some simple lemmas on technical details that have been used many times in Section \ref{sect: regularity estimates}.
\begin{lemma}
\label{lm: derivative under integral}
    Assume that for each ball $B\subset \R^d$ it holds
    \begin{align*}
        \int \int_B \left|\partial_x f(x,y)\right|+|f(x,y)|\,dx\,dy<\infty
    \end{align*}
    then in a weak sense
    \[\partial_x \int f(t,x)\,dt=\int \partial_x f(t,x)\,dt\]
\end{lemma}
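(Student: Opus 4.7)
My plan is to verify the weak-derivative identity directly via Fubini combined with a mollification argument. Define $g(x) := \int f(x,y)\,dy$ and $h(x) := \int \partial_x f(x,y)\,dy$; the hypothesis together with Fubini ensures that both $g$ and $h$ belong to $L^1_{\textup{loc}}(\R^d)$. To establish that $\partial_x g = h$ in the weak sense, it suffices to check that for every test function $\varphi \in C_c^\infty(\R^d)$,
\[\int g(x)\,\partial_x \varphi(x)\,dx = -\int h(x)\,\varphi(x)\,dx.\]

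To justify moving the derivative inside the outer integral, I would mollify $f$ in the $x$-variable. Let $\rho_\epsilon$ be a standard mollifier on $\R^d$ and set $f_\epsilon(x,y) := \int f(x-z,y)\,\rho_\epsilon(z)\,dz$. For each fixed $y$ the function $f_\epsilon(\cdot,y)$ is smooth, and commuting convolution with the weak partial derivative in $x$ gives $\partial_x f_\epsilon(\cdot,y) = \partial_x f(\cdot,y) \star \rho_\epsilon$. Classical integration by parts in $x$ followed by Fubini, which is legitimate because $\varphi$ has compact support and the hypothesis supplies uniform-in-$\epsilon$ local integrability in $y$, yields
\[\int\int f_\epsilon(x,y)\,\partial_x \varphi(x)\,dx\,dy = -\int \int \partial_x f_\epsilon(x,y)\,\varphi(x)\,dx\,dy.\]
The two iterated integrals are precisely $\int g_\epsilon\,\partial_x\varphi\,dx$ and $-\int h_\epsilon\,\varphi\,dx$, where $g_\epsilon = g \star \rho_\epsilon$ and $h_\epsilon = h \star \rho_\epsilon$. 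Passing to the limit $\epsilon \downarrow 0$ and invoking the standard $L^1_{\textup{loc}}(\R^d)$ convergence of mollifications on the compact set $\mathrm{supp}(\varphi)$ produces the claimed identity.

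The argument is essentially a bookkeeping exercise and I do not foresee a serious obstacle: the only subtlety is ensuring that every application of Fubini is justified by an absolutely convergent iterated integral, which follows from the local integrability of $|f|+|\partial_x f|$ combined with the compact support of $\varphi$, and that the limit $\epsilon \downarrow 0$ is dominated, which follows since convolution with $\rho_\epsilon$ is a contraction on $L^1$ of any fixed neighborhood of $\mathrm{supp}(\varphi)$.
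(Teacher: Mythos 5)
Your proof is correct and rests on the same mechanism as the paper's: test against $\varphi\in C_c^\infty$, apply Fubini, integrate by parts in $x$, and apply Fubini again. The paper executes this in three lines by invoking the defining identity of the weak derivative $\partial_x f(\cdot,y)$ directly, whereas you wrap the middle integration-by-parts step inside a mollification argument. That extra layer is superfluous here: the step where you commute convolution with the weak derivative, $\partial_x f_\epsilon(\cdot,y)=\partial_x f(\cdot,y)\star\rho_\epsilon$, is itself established by precisely the integration-by-parts identity you are trying to avoid, so nothing is gained in rigor — you are just re-deriving the weak-derivative pairing through a limit rather than citing it. One small terminological slip: convolution with $\rho_\epsilon$ is not a contraction on $L^1$ of a \emph{fixed} neighborhood of $\mathrm{supp}(\varphi)$, since it can import mass from outside; what you actually use is $\|u\star\rho_\epsilon\|_{L^1(K)}\le\|u\|_{L^1(K+\overline{B_\epsilon})}$, which is bounded for small $\epsilon$ and suffices for the dominated passage to the limit. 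Both proofs implicitly assume that the symbol $\partial_x f(x,y)$ denotes the weak $x$-derivative of the slice $f(\cdot,y)$ for a.e.~$y$; neither proof, including yours, supplies that slicing fact independently, but in the paper's applications it is part of the data.
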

\begin{proof}
Take a compactly supported $\varphi:\R^d \to \R$. Then 
\begin{align*}
    \int \int \varphi'(x) f(x,y)\,dy\,dx&=\int \int \varphi'(x)f(x,y)\,dx\,dy\\
    &=-\int \int \partial_x f(x,y)\varphi(x)\,dx\,dy=-\int \int \varphi(x)\partial_x f(x,y)\,dy\,dx
\end{align*}
We are using Fubini's theorem twice: in the first and the last equalities. We can do so thanks to the integrability assumptions.
    \end{proof}
\subsection{Estimates on Mollified Coefficients}
We now prove the lengthy Lemma \ref{lm: properties of mollified densities}.
\begin{proof}[Proof of Lemma \ref{lm: properties of mollified densities}]
    \begin{enumerate}
        \item We can check that for every $k \in \N$ by Jensen's inequality we have
        \begin{align*}
        \left|\frac{\nabla^k \mu_t^r}{\mu_t^r}\right|=\frac{1}{\mu_t^r}\left\vert\mu_t \star\left(\frac{ \nabla^k \eta^r}{\eta^r}\cdot\eta^r\right)\right\vert\leq \left\|\frac{\nabla^k\eta^r}{\eta^r}\right\|_\infty \leq C^k_r.
        \end{align*}
        As for the mollified in time density,
        \begin{align*}
        \left|\frac{\nabla^k \mu_t^{r,q}}{\mu_t^{r,q}}\right|&=\frac{1}{\mu_t^{r,q}}\left\vert\int_0^1\nabla^k\mu^r_{t-qs}(x) \theta(s)\,ds\right\vert=\frac{1}{\mu_t^{r,q}}\left\vert\int_0^1\frac{\nabla^k\mu^r_{t-qs}(x)}{\mu^r_{t-qs}(x)}\mu^r_{t-qs}(x) \theta(s)\,ds\right\vert \\
        &\leq \sup_{s\leq T}\left|\frac{\nabla^k\mu_s^r}{\mu_s^r}\right| \leq C^k_r.
        \end{align*}
        The bounds on logarithmic gradients follow by elementary calculations.
    \item Differentiability and boundedness follows from the definition of $f_t^r,f_t^{r,q}$. We can compute
    \begin{align*}
        |\partial_i f_t^r(x)|&= \left\vert\partial_i \log \mu_t^r(x) f_t^r(x) + \frac{1}{\mu_t^r(x)}\int_{\R^{dm}} f_t(z)\mu_t(z) \partial_i \eta^r(x-z)\,dz\right\vert\\
        &\leq |\partial_i \log \mu_t^r(x)f_t^r(x)|+\left\vert\frac{(f_t \mu_t)\star (\partial_i \log \eta^r \eta^r)(x)}{\mu_t^r(x)}\right\vert \leq C_r
    \end{align*}
    Where the bounds follows by boundedness of $f_t$ and the fact that $|\nabla^k \eta|\leq C_k \eta$. The computations for higher order derivatives and for $f_t^{r,q}$ are similar.
        \item     For the absolute continuity of $\mu_t^r$, see the discussion in \cite[Theorem 7.4.1]{bogachev2022fokker}. We can thus write for arbitrary $x\in \R^{dm}$ and $\bar s> \underline{s} \geq 0$:
\[\mu^r_{\bar s}(x)-\mu^r_{\underline{s}}(x)=\int_{\underline{s}}^{\bar{s}} \partial_t \mu_t^r(x)\,dt.\]
Where $\partial_t\mu^r_t$ is equal to the right side of \eqref{eq: PDE r}. We now differentiate in space to obtain
\[\partial_i\mu^r_{\bar s}(x)-\partial_i\mu^r_{\underline{s}}(x)=\int_{\underline{s}}^{\bar s} \partial_i\partial_t \mu_t^r(x)\,dt.\]
We can pass the derivative under the integral by Leibniz's rule, since $\partial_t \mu_t$ is differentiable in space with bounded derivative uniformly in  $t$. Hence $\partial_i \mu^r_t$ is absolutely continuous. To see absolute continuity of $\partial_i \partial_j\mu^r_t$ (and higher derivatives) repeat the above procedure, noticing that $\partial_i \partial_t \mu_t^r$ remains differentiable in space with bounded derivatives.
  \item In the weak sense, we know that $\partial_i^n\mu_t^r(x)= \int \partial_i^n \mu_t(x-rz)\eta(z)\,dz= [(\partial^n_i \mu_t/\mu_t) \mu_t] \star \eta^r$. We can then use Jensen's inequality to check for every $p\geq 1$,
    \[|\partial_i^n\mu_t^r/\mu_t^r|^p=\left\vert\frac{[(\partial^n_i \mu_t/\mu_t) \mu_t]\star \eta^r}{\mu_t^r} \right\vert^p\leq \frac{(|\partial_i^n \mu_t/\mu_t|^p \mu_t)\star \eta^r}{\mu_t^r};\]
integrate against $\mu_t^r$ to obtain the result.
    \item Using the previous point, Jensen's inequality and Fubini,
    \begin{align*}
        \int_\ell^T\int_{\R^{dm}} |\partial_i \log \mu_t^{r,q}(x)|^p\mu_t^{r,q}(x)\,dx\,dt&=
        \int_\ell^T\int_{\R^{dm}} \left|\frac{1}{\mu_t^{r,q}(x)}\int_{0}^1 \partial_i \log\mu_{t-sq}^r(x)\mu_{t-sq}^r(x)\theta(s)\,ds\right|^p\,d\mu_t^{r,q}(x)\,dt\\
        &\leq \int_\ell^T\int_{0}^1\theta(s)\int_{\R^{dm}}  \left|\partial_i \log \mu^r_{t-sq}(x)\right|^p\mu^r_{t-sq}(x)\,dx\,ds\,dt\\
        &\leq \int_{0}^1\theta(s)\int_\ell^T\int_{\R^{dm}}  \left|\partial_i \log \mu_{t-sq}(x)\right|^p\mu_{t-sq}(x)\,dx\,dt\,ds
    \end{align*}
    Where we are using point (4) for the last inequality to remove the space mollification. Now the change of variable $\bar t= t-sq$ and the assumption $\ell\geq q$ give
    \begin{align*}
        \int_\ell^T\int_{\R^{dm}} |\partial_i \log \mu_t^{r,q}(x)|^p\mu_t^{r,q}(x)\,dx\,dt&\leq \int_0^1 \theta(s)\int_{\ell-sq}^{T-sq}\int_{\R^{dm}}|\partial_i \log \mu_{\bar t}(x)|^p\mu_{\bar t}(x)\,dx\,d\bar t\,ds\\
        &\leq \int_0^1 \theta(s)\int_{0}^{T}\int_{\R^{dm}}|\partial_i \log \mu_{\bar t}(x)|^p\mu_{\bar t}(x)\,dx\,d\bar t\,ds\\
        &=\int_{0}^{T}\int_{\R^{dm}}|\partial_i \log \mu_{\bar t}(x)|^p\mu_{\bar t}(x)\,dx\,d\bar t.
    \end{align*}
 \item We want to differentiate $f_t^{r,q}$; we first need to differentiate $f_t^r$. To do so, we need to control the derivative $\partial_i (f_t\mu_t)$ when integrated against $\eta$. The fact that $\partial_i (f_t \mu_t)=\partial_i f_t\mu_t+f_t\partial_i \mu_t$ follows by the product rule for weak derivatives, the assumption that $\mu_t \in H^1(\R^d)$ and lemma \ref{Lm: derivatives of CE}(2). We can then verify that 
\begin{equation*}
    \int |f_t(\cdot-rz)\mu_t( \cdot-rz)|\,d\eta(z) \in L^1_\text{loc}(\R^d)\,;\, 
    \int |\partial_{i} \left(f_t( \cdot-rz)\mu_t( \cdot-rz)\right)|\,d\eta(z) \in L^1_\text{loc}(\R^d).
\end{equation*}
 Use Lemma \ref{lm: derivative under integral} to check that in the sense of weak derivatives
\begin{equation*}
    \partial_i \int f_t( x-rz)\mu_t( x-rz)\,d\eta(z)= 
    \int \left( \partial_i f_t( x-rz) \cdot \mu_t( x-rz) + \partial_i \mu_t( x-rz) \cdot f_t( x-rz)\right)\,d\eta(z).
\end{equation*}
With the above in mind, we can write the derivative of $f_t^r=\frac{(f_t \cdot \mu_t)\star \eta^r}{\mu_t^r}$ as
\begin{align*}
    \partial_i \frac{(f_t \mu_t)\star \eta^r(x)}{\mu^r(x)}&=\partial_i \frac{1}{\mu^r(x)}\int_{\R^{dm}} f_t(x-rz)\mu_t(x-rz)\eta(z)\,dz\\
    &=-\partial_i \log \mu^r(x) f_t^r(x)  + \frac{1}{\mu^r(x)}\int_{\R^{dm}} \partial_i f_t(x-rz)\mu_t(x-rz)\eta(z)\,dz\\
    &+\frac{1}{\mu^r(x)}\int_{\R^{dm}} f_t(x-rz)\partial_i\log \mu_t(x-rz)\mu_t(x-rz)\eta(z)\,dz.
\end{align*}
    It follows that almost everywhere,
    \begin{align*}
         \left\vert\partial_i f_t^r\right\vert&\leq  |\partial_i \log \mu^r \cdot f_t^r|+\bigg\vert\frac{(\partial_i f_t \cdot\mu_t)\star \eta^r}{\mu^r_t}\bigg\vert+\bigg\vert\frac{(f_t \cdot\partial_i \log \mu_t\cdot \mu_t)\star \eta^r}{\mu^r_t}\bigg\vert
    \end{align*}
We can integrate against $\mu_t^r$ and use Jensen's inequality, the boundedness of $f_t$ and Lemma \ref{Lm: derivatives of CE}(4) to obtain
\begin{align*}
\int_{\R^{dm}} |\partial_i f_t^r(x)|^p \mu_t^r(x)\,dx \leq C^p+C^p\int_{\R^{dm}}|\partial_i \log \mu_t(x)|^p\mu_t(x)\,dx     
\end{align*} 
where $C$ does not depend on $r$.
As for $\mu_t^{r,q}$, a similar line of reasoning brings us to
\begin{align*}
    \partial_i f_t^{r,q}= -\partial_i \log \mu_t^{r,q} f_t^{r,q}+\frac{1}{\mu_t^{r,q}}\int_0^1 \partial_i f_{t-sq}^r\mu^r_{t-sq}\theta(s)\,ds+\frac{1}{\mu_t^{r,q}}\int_0^1 f_{t-sq}^r \partial_i \log \mu_{t-sq}^r\mu_{t-sq}^r\theta(s)\,ds
\end{align*}
Since for every $x\in \R^{dm}$
\begin{equation*}
    \int_0^1\frac{\mu^r_{t-sq}\theta(s)\,ds}{\mu_t^{r,q}}=1,
\end{equation*}
we can use Jensen's inequality to find that for an arbitrary function $\varphi:[0,T]\times \R^d \to \R$ we have
\begin{align*}
    \left\vert\frac{(\varphi \cdot \mu^r)\star_t \theta^q}{\mu_t^{r,q}}\right\vert^p\leq \frac{(|\varphi|^p\cdot\mu^r)\star_t \theta^q}{\mu_t^{r,q}}
\end{align*}
And thus we obtain that for some constant $C$ that will change from line to line but that does not depend on $t,q$ or $r$, 
\begin{align*}\int_{\R^{dm}} \left|\partial_i f_t^{r,q}(x)\right|^p\,d\mu_t^{r,q}(x)&\leq 
    C^{p} \int_{\R^{dm}} \left|\partial_i \log \mu_t^{r,q}(x) f_t^{r,q}(x)\right|\,d\mu_t^{r,q}(x)\,dt
    \\
    &+C^{p} \int_{\R^{dm}} \left\vert\frac{1}{\mu_t^{r,q}(x)}\int_0^1 \partial_i f_{t-sq}^r(x)\mu^r_{t-sq}(x)\theta(s)\,ds\right|\,d\mu_t^{r,q}(x)\,dt\\
    &+C^{p}\int_{\R^{dm}} \left|\frac{1}{\mu_t^{r,q}}\int_0^1 f_{t-sq}^r(x) \partial_i \log \mu_{t-sq}^r(x)\mu_{t-sq}^r(x)\theta(s)\,ds\right|^p\,d\mu_t^{r,q}(x)\,dt\\
    &\leq C^{p}+C^p \int_0^1 \int_{\R^{dm}}(|\partial_i f_{t-sq}^r(x)|^p+|\partial_i\log\mu_{t-sq}^r(x)|^p) \mu_{t-sr}^{r}(x)\,dx \,\theta(s)\,ds\\
    &\leq C^p +C^p\int_0^1 \int_{\R^{dm}}|\partial_i \log\mu_{t-sq}( x)|^p\,d\mu_{t-sq}(x)\theta(s)\,ds
\end{align*}
By change of variables, Fubini and the previous inequality we obtain for an arbitrary $\ell\geq q$
\begin{align*}
    \int_{\ell}^T \int \left|\partial_i f_t^{r,q}(x)\right|^p\,d\mu_t^{r,q}(x)\,dt&\leq \int_\ell^T \int_0^1\int_{\R^{dm}} |\partial_i\log\mu_{t-sq}(x)|^p\,d\mu_{t-sq}(x)\theta(s)\,ds\,dt+C^p\\
    &=\int_0^1 \int_{\ell-sq}^{T-sq}\int_{\R^{dm}} |\partial_i\log\mu_{t}( x)|^p\,d\mu_t(x)\,dt\,\theta(s)\,ds+C^p\\
    &\leq  \int_{0}^{T} \int_{\R^{dm}}|\partial_i\log\mu_{t}( x)|^p\,d\mu_t(x)\,dt+C^p.
\end{align*}
    \end{enumerate}
\end{proof}
\subsection{Proof of Lemma \ref{lm: bootstrapping}}
We can now turn to the second significant Lemma of Section \ref{sect: regularity estimates}. We begin with an intermediate lemma.

\begin{lemma}
\label{lm: cross moments}
There exists a constant $C<\infty$ that depends on $\mu_0,T,\sigma,m,d$ and $f$ such that
\begin{align*}
     \int_0^T \int_{\R^{dm}}|\partial_i \log\mu_t^r (x) \partial_i \nabla\log\mu_t^r(x)|^2\,\mu_t^r(x)\,dx\,dt&\leq C.
\end{align*}
\end{lemma}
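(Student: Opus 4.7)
The plan is to mimic the higher-order energy estimate template of Proposition \ref{pr: second derivatives square moments}, but computing the time derivative of $\int |\partial_i \log \mu_t^{r,q}|^4\,d\mu_t^{r,q}$ rather than the square. The quantity $|\partial_i \log\mu\,\partial_i \nabla \log \mu|^2$ is exactly what arises as the dissipation with the good sign when this fourth-power energy is differentiated under the Fokker--Planck flow \eqref{eq:PDE q}, so the bound should come out cleanly from the same structural cancellations that produced Proposition \ref{pr: second derivatives square moments}.

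Concretely, starting from the smooth mollification $\mu_t^{r,q}$ and combining the PDE \eqref{eq:PDE q} with the equation \eqref{eq: HJB PDE first derivative} for $\partial_i \log \mu_t^{r,q}$, I would pass the time derivative inside the integral, integrate by parts the transport and Laplacian contributions, and collect terms. Writing $\varphi_t = \partial_i \log \mu_t^{r,q}$, the transport terms $\pm 4\int \varphi_t^3 \,(\partial_i\nabla\log\mu_t^{r,q})\cdot f_t^{r,q} \,d\mu_t^{r,q}$ cancel, the $\sigma^2$-terms generated by integrating by parts $\text{div}(\partial_i f)$ and $\partial_i \Delta \log \mu$ cancel against the $\sigma^2\nabla\log\mu$ and $\nabla\log\mu\cdot\partial_i f$ pieces, and one is left with the clean identity
\begin{equation*}
\frac{d}{dt}\int \varphi_t^4\,d\mu_t^{r,q} = 12\int \varphi_t^2\,\nabla\varphi_t \cdot \partial_i f_t^{r,q}\,d\mu_t^{r,q} - 12\sigma^2 \int \varphi_t^2 |\nabla \varphi_t|^2\,d\mu_t^{r,q}.
\end{equation*}
Applying Young's inequality with a small parameter to the cross term absorbs half of the dissipation, and then the elementary bound $|\varphi|^2|\partial_i f|^2 \le \tfrac12|\varphi|^4+\tfrac12|\partial_i f|^4$ gives, after integrating from $\ell$ to $T$, constants $C_1,C_2$ independent of $r,q,\ell$ such that
\begin{equation*}
C_1 \int_\ell^T\!\!\int |\partial_i \log \mu_t^{r,q}|^2 |\nabla\partial_i \log \mu_t^{r,q}|^2\,d\mu_t^{r,q}\,dt \le \int |\partial_i \log \mu_\ell^{r,q}|^4 d\mu_\ell^{r,q} + C_2 \int_\ell^T\!\!\int \big(|\partial_i \log \mu_t^{r,q}|^4 + |\partial_i f_t^{r,q}|^4\big)d\mu_t^{r,q}\,dt.
\end{equation*}

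The three terms on the right are bounded uniformly in $r,q,\ell$: the initial-time integral by Lemma \ref{lm: properties of mollified densities}(4) together with Assumption \ref{assumption: initial regularity}(3); the time integral of $|\partial_i \log \mu^{r,q}_t|^4$ by Lemma \ref{lm: properties of mollified densities}(5) combined with \eqref{eq: bound on fourth derivatives} from Proposition \ref{pr: second derivatives square moments}; and the integral of $|\partial_i f_t^{r,q}|^4$ by Lemma \ref{lm: properties of mollified densities}(6) applied at $p=4$, which reduces it once more to a bound on $\int |\partial_i \log \mu_t|^4 d\mu_t$. To conclude for $\mu_t^r$, one sends $q\downarrow 0$ and then $\ell \downarrow 0$ as in Proposition \ref{pr: second derivatives square moments}: Fatou's lemma on the left, dominated convergence on the right using the $r$-dependent smoothness from Lemma \ref{lm: properties of mollified densities}(1) and Scheff\'e's lemma.

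The main technical obstacle is the bookkeeping of the energy calculation at this higher order, in particular verifying that exactly the same cancellations between transport, Laplacian, and score terms occur, so that the residual is only the manageable cross term $\int \varphi^2 \nabla\varphi \cdot \partial_i f\,d\mu$ and the dissipation $\int \varphi^2 |\nabla\varphi|^2\,d\mu$. Once the clean identity is established, everything else is a direct application of Young's inequality and the already established uniform bounds.
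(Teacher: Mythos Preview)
Your proposal is correct and follows essentially the same route as the paper: the paper also differentiates $\int |\partial_i \log \mu_t^{r,q}|^4\,d\mu_t^{r,q}$ under the flow \eqref{eq:PDE q}, arrives at the same clean identity
\[
\frac{d}{dt}\int \varphi_t^4\,d\mu_t^{r,q} = 12\int \varphi_t^2\,\nabla\varphi_t \cdot \partial_i f_t^{r,q}\,d\mu_t^{r,q} - 12\sigma^2 \int \varphi_t^2 |\nabla \varphi_t|^2\,d\mu_t^{r,q},
\]
applies Young's inequality twice, and then removes the mollification parameters via the same $q\to 0$, $\ell\to 0$ limits as in Proposition~\ref{pr: second derivatives square moments}, with the initial datum controlled by Lemma~\ref{lm: properties of mollified densities}(4) and Assumption~\ref{assumption: initial regularity}(3). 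Your citations for the right-hand-side bounds (Lemma~\ref{lm: properties of mollified densities}(5),(6) and Proposition~\ref{pr: second derivatives square moments}) are the correct ones.
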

\begin{proof}
    We use equation \eqref{eq: HJB PDE first derivative} again to find in the sense of weak derivatives
    \begin{align*}
        \frac{d}{dt}\int_{\R^{dm}} &|\partial_i \log \mu^{r,q}_t(x)|^4\,d\mu_t^{r,q}(x) =
        -\int_{\R^{dm}}  |\partial_i \log \mu_t^{r,q}(x)|^4\text{div }(f^{r,q}_t(x)-\sigma^2 \nabla \mu_t^{r,q}(x))\,dx\\
        &-4\int_{\R^{dm}} (\partial_i \log \mu_t^{r,q}(x))^3 (\partial_i \text{div}f_t^{r,q}(x)+\partial_i \nabla \log \mu^{r,q}_t(x)\cdot f^{r,q}_t(x)+\nabla \log \mu^{r,q}_t(x) \cdot f^{r,q}_t(x))\,d\mu_t^{r,q}(x)\\
        &+4\sigma^2\int_{\R^{dm}} (\partial_i \log \mu^{r,q}_t(x))^3  (\partial_i \Delta \log \mu_t^{r,q}(x)+\partial_i |\nabla \log \mu_t^{r,q}(x)|^2)\,d\mu_t^{r,q}(x).
    \end{align*}
    Again we integrate by parts the divergence terms and the Laplacian:
\begin{align*}
     \frac{d}{dt}&\int_{\R^{dm}} |\partial_i \log \mu^{r,q}_t(x)|^4\,d\mu_t^{r,q}(x) =4\int_{\R^{dm}}  (\partial_i \log \mu_t^{r,q}(x))^3 \nabla \partial_i \log \mu_t^{r,q}(x)\cdot(f^{r,q}_t(x)-\sigma^2 \nabla \log\mu_t^{r,q}(x))\,d\mu_t^{r,q}(x)
     \\&+4\int_{\R^{dm}} 3|\partial_i \log \mu_t^{r,q}(x)|^2\partial_i \nabla \log \mu_t^{r,q}(x)\cdot \partial_i f_t^{r,q}(x)+ (\partial_i \log\mu_t^{r,q}(x))^3 \partial_i f_t^{r,q}(x)\cdot \nabla\log\mu_t^{r,q} \,d\mu_t^{r,q}(x)\\
    &-4\int_{\R^{dm}} (\partial_i \log \mu_t^{r,q}(x))^3 (\partial_i \nabla \log \mu_t^{r,q}(x)\cdot f_t^{r,q}(x)+\nabla \log \mu_t^{r,q}(x) \cdot f_t^{r,q}(x))\,d\mu_t^{r,q}(x)\\
    &-4\sigma^2 \int_{\R^{dm}} 3|\partial_i \log\mu_t^{r,q}(x)|^2 |\partial_i \nabla\log\mu_t^{r,q}(x)|^2+(\partial_i\log\mu_t^{r,q}(x))^3\partial_i \nabla\log\mu_t^{r,q}(x) \nabla\log\mu_t^{r,q}(x)\,d\mu_t^{r,q}(x)\\
        &+4\sigma^2\int_{\R^{dm}}(\partial_i\log\mu_t^{r,q}(x))^3\partial_i |\nabla \log \mu_t^{r,q}(x)|^2\,d\mu_t^{r,q}(x).
\end{align*}
We can check that the terms involving $(\partial_i \log\mu_t^{r,q})^3$ cancel out, yielding
\begin{align*}
    \frac{d}{dt}&\int_{\R^{dm}} |\partial_i \log \mu^{r,q}_t(x)|^4\,d\mu_t^{r,q}(x)\\
    &= 12\int_{\R^{dm}}|\partial_i \log \mu_t^{r,q}(x)|^2\partial_i \nabla \log \mu_t^{r,q}(x)\cdot \partial_i f_t^{r,q}(x)- |\partial_i \log\mu_t^{r,q}(x)|^2 |\partial_i \nabla\log\mu_t^{r,q}(x)|^2\,d\mu_t^{r,q}(x)\\
    &\leq 12\int_{\R^{dm}} \left(\frac{\zeta}{2}-\sigma^2\right)|\partial_i \nabla \log\mu_t^{r,q}(x)|^2 |\partial_i \log \mu_t^{r,q}(x)|^2+\frac{1}{2\zeta} |\partial_i f_t^{r,q}(x)|^2 |\partial_i \log \mu_t^{r,q}(x)|^2\,d\mu_t^{r,q}(x). 
\end{align*}
    Use Young's inequality again on the last term, take $\zeta$ to be small enough and integrate in time to find
    \begin{align}
    \begin{split}    
    \label{eq: control over fourth moment first derivative for mollification}
        \int_{\R^d} |\partial_i &\log \mu^{r,q}_t(x)|^4 \,d\mu_t^{r,q}(x)+ C\int_\ell^T \int_{\R^{dm}} |\partial_i \log \mu_t^{r,q}(x)|^2|\nabla \partial_i \log \mu_t^{r,q}(x)|\,d\mu_t^{r,q}(x)\,dt\\
        &\leq 
        \int_{\R^{dm}} |\partial_i \log \mu^{r,q}_\ell(x)|^4 \,d\mu_\ell^{r,q}(x)+C \int_\ell^T \int_{\R^{dm}} |\partial_i \log \mu_t^{r,q}(x)|^4+|\partial_i f_t^{r,q}(x)|^4\,d\mu_t^{r,q}(x)\,dt
        \end{split}
    \end{align}
    Notice that the last term is again bounded uniformly in $\ell,q,r$ by Lemma \ref{lm: properties of mollified densities}(6) and Lemma \ref{lm: bootstrapping}. We can now follow the same procedure as in Proposition \ref{pr: second derivatives square moments} to send $q,\ell\to 0$ and find
    \begin{align*}
    \begin{split}
        \int_{\R^{dm}} |\partial_i \log \mu^r_T(x)|^4\,d\mu^r_t(x)&+C \int_0^T\int_{\R^{dm}} |\partial_i \log \mu^r_t(x)|^2|\nabla \partial_i \log \mu^r_t(x)|^2\,d\mu^r_t(x)\,dt\\
        &\leq C+\int_{\R^{dm}} |\partial_i \log \mu^r_0(x)|^4 \,d\mu^r_0(x)\\
        &\leq C+\int_{\R^{dm}}|\partial_i \log \mu_0(x)|^4\,d\mu_0(x)<\infty.
    \end{split}
    \end{align*}
    Notice that we are using Lemma \ref{lm: properties of mollified densities}(4) in the last line.
\end{proof}

We can now perform the required calculations.

\begin{proof}[Proof of Lemma \ref{lm: bootstrapping}]
    \begin{enumerate}
       
    \item We know that 
        \begin{align*}
            \partial_i f_t^r(x)&=-\partial_i \log \mu^r(x) f_t^r(x)  + \frac{1}{\mu^r(x)}\int \partial_i f_t(x-rz)\mu_t(x-rz)\eta(z)\,dz\\
    &+\frac{1}{\mu^r(x)}\int  f_t(x-rz)\partial_i\log \mu_t(x-rz)\mu_t(x-rz)\eta(z)\,dz
        \end{align*}
        We follow the reasoning of Lemma \ref{lm: properties of mollified densities}: we know by Lemma \ref{Lm: derivatives of CE} and Propositions $\ref{pr: second derivatives square moments}$ and $\ref{pr: second derivatives of drift}$ that for almost every $t>0$, $f_t,\partial_i f_t,\partial_i^2 f_t$, 
        $\partial_i \log \mu_t,\partial_i^2 \log \mu_t \in L^2_\text{loc}(\R^d)$, and we can thus apply the product rule for weak derivatives and pass the derivative under the integral (as usual in the sense of weak derivatives). We can then compute
        \begin{align*}
            \partial_i^2 f_t^r&=-\partial_i^2 \log \mu_t^r\cdot f_t^r-\partial_i\log\mu_t^r \partial_i f_t^r-\partial_i\log \mu_t^r \cdot \frac{(\partial_i f_t \cdot \mu_t)\star \eta_r}{\mu_t^r}\\
            &+\frac{(\partial_i^2 f_t \cdot \mu)\star \eta_r}{\mu_t^r}+\frac{(\partial_i f_t\cdot \partial_i\log \mu_t\cdot \mu_t)\star \eta_r}{\mu_t^r}-\partial_i \log \mu_t^r\frac{(f_t \cdot \partial_i \log \mu_t\cdot \mu_t)\star \eta_r}{\mu_t^r}\\
            &+\frac{(\partial_i f \partial_i \log \mu_t \cdot \mu_t)\star \eta_r}{\mu_t^r}+\frac{(f_t \partial_i^2\log \mu_t\cdot \mu_t)\star \eta_r}{\mu_t^r}
        \end{align*}
        Notice that 
        \begin{align*}
            \partial_i^2 \log \mu_t^r&=\frac{\partial^2_i \mu_t^r}{\mu_t^r}- |\partial_i \log \mu_t^r|^2\\
            &=\frac{(\partial_i^2 \log \mu_t \cdot \mu_t)\star \eta^r}{\mu_t^r}+\frac{(|\partial_i \log \mu_t |^2\cdot \mu_t)\star \eta^r}{\mu_t^r}-|\partial_i \log \mu_t^r|^2.
        \end{align*}
        We can then use the fact that $f_t^r$ is uniformly bounded and Jensen and Young inequalities to obtain 
        \begin{equation}
        \label{eq: bound on partial_i^2 f^r}
            \int_{\R^{dm}}|\partial^2_i f_t^r(x)|^2\mu_t^r(x)\,dx\leq C \left(1+\E\left[|\partial_i^2 \log \mu_t(X_t)|^2+|\partial_i \log \mu_t(X_t)|^4+|\partial_i f_t(X_t)|^4+|\partial_i^2 f_t(X_t)|^2\right]\right).
        \end{equation}
         A similar computation yields
    \begin{align*}
            \partial_i^2 f_t^{r,q}&=-\partial_i^2 \log \mu_t^{r,q}\cdot f_t^{r,q}-\partial_i\log\mu_t^{r,q} \partial_i f_t^{r,q}-\partial_i\log \mu_t^{r,q} \cdot \frac{(\partial_i f^r_t \cdot \mu^r_t)\star_t \theta^q}{\mu_t^{r,q}}\\
            &+\frac{(\partial_i^2 f^r_t \cdot \mu_t^r)\star_t \theta^q}{\mu_t^{r,q}}+\frac{(\partial_i f^r_t\cdot \partial_i\log \mu^r_t\cdot \mu^r_t)\star \theta^q}{\mu_t^{r,q}}-\partial_i \log \mu_t^{r,q}\frac{(f^r_t \cdot \partial_i \log \mu^r_t\cdot \mu^r_t)\star_t \theta^q}{\mu_t^{r,q}}\\
            &+\frac{(\partial_i f_t^r \partial_i \log \mu^r_t \cdot \mu^r_t)\star_t \theta^q}{\mu_t^{r,q}}+\frac{(f^r_t \partial_i^2\log \mu^r_t\cdot \mu^r_t)\star_t \theta^q}{\mu_t^{r,q}}
    \end{align*}
    We can now follow the proof of Lemma \ref{lm: properties of mollified densities}(6): perform a change of variable, use Jensen's inequality and \eqref{eq: bound on partial_i^2 f^r} obtain 
    \begin{align*}
        \int_\ell^T &\int_{\R^{dm}} |\partial_i^2 f_t^{r,q}(x)|^2\,d\mu_t^{r,q}(x)\,dt\leq\\
        &C \int_0^T \int_{\R^{dm}} (1+|\partial_i^2 \log\mu_t(x)|^2+|\partial_i \log \mu_t(x)|^4+|\partial_i^2 f_t(x)|^2+|\partial_i f_t(x)|^4)\,d\mu_t(x)\,dt \leq C<\infty,
    \end{align*}
    where boundedness follows from Propositions \ref{pr: second derivatives of drift} and \ref{pr: second derivatives square moments} and Lemma \ref{lm: properties of mollified densities}(6).
    \item The proof of Lemma \ref{lm: cross moments} readily implies that $\sup_{t\leq T} \E[|\nabla\log\mu_t(X_t)|^4]<\infty.$
    We can then compute for some $\ell\geq q>0$, using the same reasoning as the proof of Lemma \ref{lm: properties of mollified densities}(4)
    \begin{align*}
        \int_{\R^{dm}} |\partial_i \log \mu^{r,q}_\ell(x)|^4\,d\mu_\ell^{r,q}(x)&\leq \int_0^1 \E[|\partial_i\log \mu_{\ell-sq}(X_{\ell-sq})|^4]\theta(s)\,ds\\
        &\leq \sup_{t\leq T} \E[|\partial_i\log\mu_t(X_t)|^4]<\infty.
    \end{align*}
    Now the result follows from Equation \eqref{eq: control over fourth moment first derivative for mollification}.
    \end{enumerate}
\end{proof}
\subsection{Some integration by parts}
We perform the routine integration by parts needed to get control over the third moments of $\nabla^2 \log \mu_t$.
\begin{proof}[Proof of Proposition \ref{pr: second derivatives third moments}]
    By integration by parts and Young's inequality we find
\begin{align*}\int_{\R^{dm}}&|\partial^2_i  \log \mu^r_t(x)|^3\,d\mu_t^r(x) = \int_{\R^{dm}} |\partial^2_i\log \mu_t^r(x)|\partial^2_i  \log \mu_t^r(x) \partial_i^2 \log \mu_t^r(x)\,d\mu_t^r(x)\\
    &\leq \int_{\R^{dm}} |\partial_i^2 \log \mu_t^r(x)\partial_i^3 \log \mu_t^r(x)\partial_i \log \mu_t^r(x)| + |\partial_i^2 \log \mu_t^r(x)\partial_i^2 \log \mu_t^r(x)|\partial_i \log \mu_t^r(x)|^2| \,d \mu_t^r(x)\\
    &+\int_{\R^{dm}} |\partial^3_i\log \mu_t^r(x)\partial_i^2 \log \mu_t^r(x) \partial_i \log \mu_t^r(x)|\,d \mu_t^r(x) \\
   &\leq \int_{\R^{dm}}|\partial_i^3 \log \mu_t^r(x)|^2\,d\mu_t^r(x)+\frac 32\int _{\R^{dm}}|\partial_i^2 \log \mu_t^r(x)|^2 |\partial_i \log \mu_t^r(x)|^2\,d\mu_t^r(x).
\end{align*}
The terms on the right hand side can be bounded using Lemma \ref{lm: cross moments} and \eqref{eq: bound on third derivatives}.
Integrate in time to find 
\begin{equation}
\label{eq: third moments second derivatives}
    \sup_{r>0}\,\int_0^T \int_{\R^{dm}} |\partial_i^2 \log \mu_t^r(x)|^3\,d\mu_t^r(x)\,dt< \infty
\end{equation}
A similar calculation gives us 
\begin{align*}
    \int_{\R^{dm}} |\partial_i \log \mu^r_t(x)|^6\,d\mu_t^r(x)&=-5\int_{\R^{dm}} |\partial_i \log \mu_t^r(x)|^4 \partial_i^2 \log \mu_t^r(x) \,d\mu_t^r(x)\\
    &\leq \frac{5}{3\zeta^3}\int_{\R^{dm}} |\partial_i^2\log\mu_t^r(x)|^3\,d\mu_t^r(x)+\frac{5\zeta^{3\slash 2}}{3}\int_{\R^{dm}} |\partial_i \log \mu_t^r(x)|^6\,d\mu_t^r(x) .
\end{align*}
Take $\zeta$ small enough, integrate in time and use \ref{eq: third moments second derivatives} to find
\begin{equation}
    \label{eq: sixth moments}\sup_{r>0}\,\int_0^T \int_{\R^{dm}} |\partial_i \log \mu_t^r(x)|^6\,d\mu_t^r(x)\,dt\leq C.
\end{equation}
We keep integrating by parts: consider $i\neq j$ and
\begin{align*}
\int_{\R^{dm}}&|\partial_i\partial_j\log\mu_t^r(x)|^3\,d\mu_t^r(x)\leq \int_{\R^{dm}} |\partial_j\log\mu_t^r(x)\partial_i \log \mu_t^r(x) |\partial_i\partial_j\log\mu_t^r(x)|^2|\,d\mu_t^r(x)\\
    &+\int_{\R^{dm}}|\partial_j \log\mu_t^r(x)\partial^2_i\partial_j\log\mu_t^r(x)\partial_i\partial_j\log\mu_t^r(x)|\,d\mu_t^r(x)\\
    &+ \int_{\R^{dm}}|\partial_j\log\mu_t^r(x) \partial_i^2\partial_j\log\mu_t^r(x)\partial_i\partial_j\log\mu_t^r(x)|\,d\mu_t^r(x)\\
    &\leq \frac12 \int_{\R^{dm}} |\partial_i\log\mu_t^r(x)|^2|\partial_i\partial_j\log\mu_t^r(x)|^2+|\partial_j\log\mu_t^r(x) \partial_i\partial_j\log\mu_t^r(x)|^2\,d\mu_t^r(x)\\
    &+\int|\partial_i^2\partial_j\log\mu_t^r(x)|^2\,d\mu_t^r(x)+\int |\partial_j\log\mu_t^r(x) \partial_i\partial_j \log\mu_t^r(x)|^2\,d\mu_t^r(x) 
\end{align*}
Integrate yet again in time, use Equations \eqref{eq: bound on third derivatives} and Lemma \ref{lm: cross moments} to find
\begin{align}
\label{eq: bound on third moments of second cross derivatives of log}
    \sup_{r>0}\int_0^T\int_{\R^{dm}} |\partial_i\partial_j \log \mu_t^r(x)|^3\,d\mu_t^r(x)<\infty
\end{align}
 Putting together \eqref{eq: sixth moments} and \eqref{eq: bound on third moments of second cross derivatives of log} and using Young's inequality, we conclude that there exists a constant independent of $r$ such that for each $i,j$,
\begin{align*}
    \int_0^T\int_{\R^{dm}} &\left|\frac{\partial_i\partial_j \mu_t^r(x)}{\mu_t^r(x)}\right|^3\,d\mu_t^r(x)\,dt\\
    &\leq 
    4\int_0^T\int_{\R^{dm}} \left|\partial_i \partial_j \log \mu_t^r(x)\right|^3+\left|\partial_i \log\mu_t^r(x)\partial_j \log \mu_t^r(x)\right|^3\,d\mu_t^r(x)\,dt
    \leq C<\infty.
\end{align*}
\end{proof}

\subsection{Boundedness of  p-Divergence}
In this subsection we check that condition \ref{assumption: initial regularity}(3) holds for every $t>0$ if we assume that it holds at $t=0$.
\begin{proposition}
\label{pr: boundedness of Renyi Divergence}
    Under assumption \ref{assumption: initial regularity}, for every $t\geq 0$ it holds
    \[\sup_{r>0}\int_{\R^{dm}}\int_0^1\int_0^1D_4(\mu_t^r\star_{j}\delta_{\sqrt h s \ell z}\,\|\,\tilde\mu_t^r)\,ds\,d\ell K(z)\,dz \,dt\leq e^{6tM/\sigma^2} C_D<\infty.\]
\end{proposition}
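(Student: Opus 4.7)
The plan is to reduce the bound at time $t$ to an initial-time quantity controlled by Assumption \ref{assumption: initial regularity}(1), via a three-step argument: a Jensen-type convexity reduction, a translation-invariance identity, and a Gronwall bound for $D_4$ along two Fokker--Planck evolutions with translated drifts.

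First, since $\tilde\mu_t^r = \int (\mu_t^r \star_j \delta_{\sqrt h z_2})\,K(z_2)\,dz_2$ and $x \mapsto x^{-3}$ is convex on $(0,\infty)$, applying Jensen's inequality inside the definition $D_4(\alpha\,\|\,\beta)=\int \alpha^4/\beta^3\,dx$ gives
\[D_4(\mu_t^r \star_j \delta_v \,\|\, \tilde \mu_t^r) \leq \int D_4(\mu_t^r \star_j \delta_v \,\|\, \mu_t^r \star_j \delta_{\sqrt h z_2})\,K(z_2)\,dz_2.\]
By the invariance of $D_4$ under a common translation of both arguments, each integrand equals $D_4(\mu_t^r \,\|\, \mu_t^r \star_j \delta_{\sqrt h z_2 - v})$, so the problem reduces to bounding $D_4(\mu_t^r \,\|\, \mu_t^r \star_j \delta_c)$ for arbitrary $c \in \R^d$.

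To control the time evolution of this divergence, I would observe that $\mu_t^r$ satisfies the Fokker--Planck PDE with drift $f_t^r$, while $\beta_t := \mu_t^r \star_j \delta_c$ solves the same PDE but with the translated drift $\tilde f_t^r(x^j,x^{-j}) := f_t^r(x^j-c, x^{-j})$. Crucially, both $\|f_t^r\|_\infty$ and $\|\tilde f_t^r\|_\infty$ are bounded by $M := \|f\|_\infty < \infty$. Setting $\phi_t = \mu_t^r/\beta_t$ (smooth and positive thanks to the mollification), differentiating $D_4(\mu_t^r\,\|\,\beta_t) = \int \phi_t^4\beta_t\,dx$, and integrating by parts repeatedly (justified by Lemma \ref{lm: properties of mollified densities}), the cross-diffusion terms cancel and one is left with
\[\frac{d}{dt} D_4(\mu_t^r\,\|\,\beta_t) = 12\int \phi_t^3 \beta_t (f_t^r - \tilde f_t^r)\cdot \nabla \phi_t\,dx - 12\sigma^2 \int \phi_t^2 \beta_t |\nabla \phi_t|^2\,dx.\]
A Young's inequality using $|f_t^r - \tilde f_t^r| \leq 2M$ absorbs the Fisher-information-type gradient term into the diffusion term, yielding $\frac{d}{dt} D_4 \leq (CM^2/\sigma^2)\,D_4$ and hence $D_4(\mu_t^r\,\|\,\beta_t) \leq e^{CM^2 t/\sigma^2} D_4(\mu_0^r\,\|\,\beta_0)$ by Gronwall.

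To bound the initial-time quantity, I would use data processing for the convolution channel $\star\eta^r$ to get $D_4(\mu_0^r\,\|\,\mu_0^r\star_j \delta_c) \leq D_4(\mu_0\,\|\,\mu_0\star_j\delta_c)$, and then translation invariance to rewrite this as $D_4(\mu_0 \star_j \delta_{-c}\,\|\,\mu_0)$. Substituting $v = \sqrt h\,s\ell z$ and $c = \sqrt h z_2 - v$ gives $-c = \sqrt h(s\ell z - z_2)$; exploiting the symmetry $K(-\cdot) = K$ through the change of variable $z_2 \mapsto -z_2$, the integrated right-hand side becomes exactly the expression in Assumption \ref{assumption: initial regularity}(1), which is finite and supplies $C_D$. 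The main obstacle is the Gronwall step: keeping the constant uniform in $r$ requires the $r$-independent bound $\|f_t^r\|_\infty \leq \|f_t\|_\infty$ and careful use of the smoothness provided by Lemma \ref{lm: properties of mollified densities} to justify every integration by parts simultaneously in $r$.
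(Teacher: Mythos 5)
Your proof is correct and follows essentially the same route as the paper: a convexity (Jensen) reduction, translation invariance of $D_4$, a Gronwall estimate via integration by parts and Young's inequality with the $r$-uniform bound $\|f_t^r\|_\infty \leq \|f\|_\infty$, a data-processing step at $t=0$, and a final change of variables using $K(-\cdot)=K$ to match Assumption \ref{assumption: initial regularity}(1). The only difference — you shift the \emph{second} argument of $D_4$ during the Gronwall step and undo the shift at $t=0$, whereas the paper shifts the \emph{first} argument from the start — is purely cosmetic, as the two quantities coincide after a change of variables.
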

where $M$ is a constant that does not depend on $r$.
\begin{proof}
By convexity of $\D_4$, change of variable and symmetry of $K$ we obtain for arbitrary $u \in \R^d$
\begin{align*}
    \D_4(\mu_t^r\star_j \delta_{u}\,\|\,\tilde \mu_t^r)&\leq \int_{\R^d} \D_4(\mu_t^r \star_j\delta_{u}\,\|\,\mu_t^r\star_{j}\delta_{\sqrt h z})K(z)\,dz\\
    &=\int_{\R^d} \D_4(\mu_t^r \star_j\delta_{u+\sqrt h z}\,\|\,\mu_t^r)K(z)\,dz.
\end{align*}
Take $z \in \R^{d}$. Following the discussion before \eqref{eq: PDE r}, we obtain that $\mu_t^r\star_j \delta_z$ is a weak solution of the PDE
\[\partial_t \mu_t^r \star \delta_z = -\text{div}(f_t^r\star \delta_z \cdot \mu_t^r \star \delta_z)+\sigma^2 \Delta \mu_t^r\star \delta_z.\]
Now we can check that by Lemma \ref{lm: properties of mollified densities}(1),
\begin{align*}
    |\log \mu_t^r(x+u)-\log \mu_t^r(x)|= \left\vert\int_0^1 u\cdot \nabla\log\mu_t^r(x+su)\,ds\right\vert \leq \|\nabla\log\mu_t^r\|_{\infty}|u|;  
\end{align*}
which implies that 
\[\frac{\mu_t^r(x+u)}{\mu_t^r(x)}\leq e^{C|u|}.\]
hence we can use Lemma \ref{lm: derivative under integral} to differentiate under the integral and check that for an arbitrary $z \in \R^{d}:$
    \begin{align*}
    \frac{d}{dt}\D_4( \mu^r_t\star \delta_z\,\|\,\mu_t)&=\frac{d}{dt} \int_{\R^{dm}} \frac{( \mu^r_t(x-z))^4}{(\mu^r_t(x))^3}\,dx\\
    &=4\int_{\R^{dm}}\left(\frac{\mu^r_t(x-z)}{\mu_t^r(x)}\right)^{3}(-\text{div}(f^r_t(x-z) \mu^r_t(x-z)) +\sigma^2 \Delta \mu^r_t(x-z)\,dx \\
    &-3\int_{\R^{dm}} \left(\frac{ \mu^r_t(x-z)}{\mu_t^r(x)}\right)^4 (-\text{div}(f_t^r(x)\mu_t(x))+\Delta \mu^r_t(x) \sigma^2)\,dx\\
    &=  -\sigma^2 12\int_{\R^{dm}} \left|\nabla \log \frac{ \mu^r_t(x-z)}{\mu^r_t(x)}\right|^2 \left(\frac{ \mu^r_t(x-z)}{\mu^r_t(x)}\right)^{3} \mu^r_t(x-z)\,dx \\
    &+ 12\int_{\R^{dm}}(f_t^r(x-z) -f_t^r(x))\cdot \nabla \log \frac{ \mu^r_t(x-z)}{\mu^r_t(x)} \left(\frac{ \mu^r_t(x-z)}{\mu^r_t(x)}\right)^{3} \mu^r_t(x-z)\,dx\\
    &\leq \frac{12}{4\sigma^2}\int_{\R^{dm}} |f_t^r(x-z)-f_t^r(x)|^2 \left(\frac{ \mu^r_t(x-z)}{\mu^r_t(x)}\right)^{3} \mu^r_t(x-z)\,dx.
\end{align*}
Where we integrate by parts and the last inequality follows by Young's inequality. Since $|f_t^r\star\delta_z- f_t^r|^2\leq 2 \|f_t\|^2_\infty$, we obtain (in the sense of weak derivatives)
\[ \frac {d}{dt} D_4(\mu_t^r\star\delta_{z}\,\|\,\mu_t^r)\leq \frac{6\|f_t^r\|^2_\infty}{\sigma^2} \D_4( \mu^r_t\star \delta_z\,\|\,\mu_t^r).\]
and by Gronwall's lemma and the data processing inequality we conclude 
\[\D_4(\mu_t^r\star \delta_z \,\|\, \mu_t^r)\leq e^{6T M^2\slash \sigma^2}\D_4(\mu_0^r\star \delta_z\,\|\,\mu_0^r)\leq e^{6T M^2\slash \sigma^2} \D_4(\mu_0\star \delta_z\,\|\,\mu_0).\]
With $M= \|f\|_\infty$. Now we can use the data processing inequality and convexity of $D_4$ to check
\begin{align*}
\int_{\R^{d}}&\int_0^1\int_0^1D_4(\mu_t^r\star_{j}\delta_{\sqrt h s \ell z}\,\|\,\tilde\mu_t^r)\,ds\,d\ell K(z)\,dz\\
&\leq \int_{\R^{d}}\int_0^1\int_0^1 \D_4(\mu^r_t\star_j\delta_{\sqrt h s\ell z_1 +\sqrt h z_2}\,\|\,\mu_t^r)\,ds\,d\ell K(z_1)\,dz_1K(z_2)\,dz_2\,\\
&\leq e^{6tM/\sigma^2}\int_{\R^{d}}\int_0^1\int_0^1 \D_4(\mu_0\star_j\delta_{\sqrt h s\ell z_1 +\sqrt h z_2}\,\|\,\mu_0)\,ds\,d\ell K(z_1)\,dz_1K(z_2)\,dz_2\\
&=e^{6tM/\sigma^2} C_D<\infty,
\end{align*}
since $C_D<\infty$ by Assumption \ref{assumption: initial regularity}(3). 
\end{proof}

\end{appendix}

\bibliographystyle{plainnat}
\bibliography{Biblio}
\end{document}